\title{\textbf{Giraud's Theorem and Categories of Representations}}
\author{Renaud Gauthier \footnote{rg.mathematics@gmail.com} \\ \\}
\theoremstyle{definition}
\newtheorem{Giraud}{Theorem}[section]
\newtheorem{Lemma}[Giraud]{Lemma}
\newtheorem{sheaf}[Giraud]{Proposition}
\newtheorem{abOp}{Definition}[section]
\newtheorem{cohabop}[abOp]{Definition}
\newtheorem{Onm}[abOp]{Definition}
\newtheorem{umbrella}[abOp]{Definition}
\newtheorem{asscat}[abOp]{Definition}
\newtheorem{Noteasscat}[abOp]{Note}
\newtheorem{RnCat}[abOp]{Lemma}
\newtheorem{equ}[abOp]{Theorem}
\newtheorem{Yoneda}[abOp]{Lemma}
\newtheorem{embed}[abOp]{Corollary}
\newtheorem{rep}[abOp]{Proposition}
\newtheorem{Correp}[abOp]{Corollary}
\newtheorem{ultthm}{Theorem}[section]
\newtheorem{unit}[ultthm]{Lemma}
\newtheorem{Esheaf}[ultthm]{Lemma}
\newtheorem{repunit}[ultthm]{Lemma}
\newtheorem{etaiso}[ultthm]{Lemma}
\newtheorem{preco}[ultthm]{Lemma}
\newtheorem{locsurj}[ultthm]{Lemma}
\newtheorem{presepi}[ultthm]{Lemma}
\newtheorem{Ladj}[ultthm]{Lemma}
\newtheorem{ai}[ultthm]{Lemma}
\DeclareMathOperator*{\rrarrop}{\rightrightarrows}
\DeclareMathOperator*{\colim}{\text{colim}}
\newcommand{\beq}{\begin{equation}}
\newcommand{\eeq}{\end{equation}}
\newcommand{\rarr}{\rightarrow}
\newcommand{\Rarr}{\Rightarrow}
\newcommand{\xrarr}{\xrightarrow}
\newcommand{\Lrarr}{\Longrightarrow}
\newcommand{\rrarr}{\rightrightarrows}
\newcommand{\rlarr}{\rightleftarrows}
\newcommand{\str}{\text{str}}
\newcommand{\Str}{\text{Str}}
\newcommand{\eset}{\emptyset}
\newcommand{\Ob}{\text{Ob}}
\newcommand{\Mod}{\text{Mod}}
\newcommand{\Mor}{\text{Mor}}
\newcommand{\Nat}{\text{Nat}}
\newcommand{\cF}{\mathcal{F}}
\newcommand{\cO}{\mathcal{O}}
\newcommand{\cC}{\mathcal{C}}
\newcommand{\cD}{\mathcal{D}}
\newcommand{\cE}{\mathcal{E}}
\newcommand{\cP}{\mathcal{P}}
\newcommand{\cR}{\mathcal{R}}
\newcommand{\cS}{\mathcal{S}}
\newcommand{\cU}{\mathcal{U}}
\newcommand{\Ed}{E^{\downarrow}}
\newcommand{\Fd}{F^{\downarrow}}
\newcommand{\Hom}{\text{Hom}}
\newcommand{\bland}{\textbf{bland}}
\newcommand{\RMod}{R\text{-Mod}}
\newcommand{\RNatMod}{R\text{-NatMod}}
\newcommand{\ROhC}{R\text{-}\cO h_C}
\newcommand{\ROh}{R\text{-}\cO h}
\newcommand{\RNatOh}{R\text{-Nat}\cO h}
\newcommand{\RModCop}{(R\text{-Mod})^{\mathcal{C}^{op}}}
\newcommand{\RNatModCop}{(\RNatMod)^{\cC^{op}}}
\newcommand{\sRNatModCop}{!\RNatModCop}
\newcommand{\SetCop}{\text{Sets}^{\cC^{op}}}
\newcommand{\epiplus}{\text{epi}\mathbf{+}}
\newcommand{\ShCJ}{\text{Sh}(\cC,J)}
\newcommand{\ShRCJ}{\text{Sh}_{R}(\cC,J)}
\newcommand{\Cnm}{\cC^{(n|m)}}
\newcommand{\Oenm}{\cO^{(n|m)}}
\newcommand{\Pqr}{\cP^{(q|r)}}
\newcommand{\Skl}{\cS^{(k|l)}}
\begin{document}
\maketitle
\begin{abstract}
We present an alternate proof of Giraud's Theorem based on the fact that given the conditions on a category $\cE$ for being a topos, its objects are sheaves by construction. Generalizing sets to $R$-modules for $R$ a commutative ring, we prove that a category with small hom-sets and finite limits is equivalent to a category of sheaves of $R$-modules on a site if and only if it satisfies Giraud's axioms and in addition is enriched in a certain symmetric monoidal category parametrized by an $R$-module.
\end{abstract}

\newpage

\section{Introduction}
Giraud gave a characterization of topos that is independent of a site (\cite{G}). He gave conditions on a category $\cE$ to be a Grothendieck topos. The construction of such a category aims at first proving that Hom$_{\cE}(-, E)$ is a sheaf for all objects $E$ of $\cE$ and then uses a Hom-tensor adjunction to prove that $\cE$ is indeed equivalent to a Grothendieck topos. In an effort to try to understand all of Giraud's conditions abstractly rather than being a list of conditions to be met, we show that objects of $\cE$ are sheaves, which leads to an alternate proof of Giraud's Theorem, which is presented first. One question that comes to mind is whether one can generalize such a result to the case of categories of representations, functor categories of the form $\RModCop$ for a small category $\cC$, $R$ a fixed commutative ring with a unit. We show that a category $\cE$ with small hom-sets and a small set of generators is equivalent to a certain category of representations. However those latter categories are built with very restrictive conditions. In general one may ask under what conditions a category $\cE$ is equivalent to a category of representations $\RModCop$ for any commutative ring $R$, and this is proved in the last part of the paper, where in particular we show that to have such a result forces us to regard $\cE$ as a Grothendieck topos, hence to also put a Grothendieck topology on $\cC$, which ``topologizes'' the category of representations $\RModCop$.

\section{Foundational material}
The reader who is familiar with the language of topos may safely skip this section which is based on \cite{G} and mostly \cite{ML} .
Recall that given a small category $\cC$ on which there is a basis $K$ for a Grothendieck topology, an object $E$ of $\SetCop$ is a sheaf if and only if for any $\{f_i: C_i \rarr C \quad | \quad i \in I \}$ in $K(C)$, any matching family has a unique amalgamation. This means given:
\beq
\begin{CD}
C_i \times_C C_j @>\pi_{ij}^{(2)}>> C_j \\
@V\pi_{ij}^{(1)}VV @VVf_jV \\
C_i @>>f_i> C
\end{CD} \label{CDone}
\eeq
and assuming the existence of a matching family $\{ x_i \}_{i \in I}$, $E$ maps \eqref{CDone} into:
\beq
\setlength{\unitlength}{0.5cm}
\begin{picture}(16,9)(0,0)
\thicklines
\put(2.5,1){$E(C_i) \ni x_i$}
\put(12,1){$x \quad !$}
\put(12,7){$x_j \in E(C_j)$}
\put(0,7){$E(C_i \times_C C_j) \ni \alpha$}
\put(3,4){$E(\pi_{ij}^{(1)})$}
\put(8,8){$E(\pi_{ij}^{(2)})$}
\put(6,2){\vector(0,1){4}}
\multiput(7,1)(0.5,0){8}{\line(1,0){0.3}}
\put(7.5,1){\vector(-1,0){0.5}}
\multiput(12.2,2)(0,0.5){8}{\line(0,1){0.3}}
\put(12.2,5.5){\vector(0,1){0.5}}
\put(11,7){\vector(-1,0){4}}
\end{picture}
\nonumber
\eeq
In words, if $E(\pi_{ij}^{(1)})x_i = \alpha = E(\pi_{ij}^{(2)})x_j$, which is written $x_i \cdot \pi_{ij}^{(1)} = x_j \cdot \pi_{ij}^{(2)}$ in \cite{ML}, then $\exists \, ! \, x \in E(C)$ such that $x \cdot f_i = x_i$ and $x \cdot f_j = x_j$. Thus $E$ maps objects to sets, and if elements $x_i$ and $x_j$ do match in $E(C_i \times_C C_j)$, then there is a unique $x \in E(C)$ from which they are originating, thereby extending to the level of sets the commutative diagram provided by the pullback at the level of objects in $\cC$. For $\cC$ a small category, $J$ a Grothendieck topology on $\cC$, sheaves on $(\cC,J)$ along with natural transformations between them form a category of sheaves $\ShCJ$. A Grothendieck topos is a category $\cE$ equivalent to $\ShCJ$ for some site $(\cC,J)$. For the definitions of kernel pairs, exact forks and equivalence relations, the reader is referred to \cite{ML}.\\

\section{Giraud's Theorem}
We first state Giraud's Theorem and then give a proof different in spirit from the original one (\cite{ML}).
\begin{Giraud} \label{Gir}
Let $\cE$ be a category with small hom-sets and all finite limits. Then it is a Grothendieck topos if and only if the following properties hold:
(i) $\cE$ has all small coproducts and in addition those are disjoint, and stable under pullback.\\
(ii) Every epimorphism in $\cE$ is a coequalizer.\\
(iii) Every equivalence relation $R \rrarr E$ in $\cE$ is a kernel pair and admits a quotient.\\
(iv) Every exact fork $R \rrarr E \rarr Q$ is stably exact.\\
(v) There is a small set of objects of $\cE$ that generates $\cE$.
\end{Giraud}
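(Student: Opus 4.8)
The plan is to concentrate on the substantive direction, that axioms (i)--(v) force $\cE$ to be a Grothendieck topos, since the converse amounts to verifying that every $\ShCJ$ enjoys the stated exactness and generation properties, which is standard. The guiding idea, following the abstract, is to exhibit the objects of $\cE$ as sheaves \emph{by construction}: I would build a site out of the generators and show that the represented functors are automatically sheaves for it.

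First I would invoke axiom (v) to fix a small generating set, and let $\cC$ be the full subcategory of $\cE$ spanned by it. I would then topologize $\cC$ by declaring a family $\{f_i\colon C_i \rarr C\}$ to be a cover precisely when it is jointly epimorphic in $\cE$; axioms (i), (ii) and (iv) are exactly what is needed to check that this data satisfies the axioms for a basis of a Grothendieck topology $J$, since disjoint, stable coproducts together with the coequalizer and stable-exactness conditions guarantee the required stability under pullback and composition.

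Next I would consider the functor $y\colon \cE \rarr \SetCop$ sending $E \mapsto \Hom_{\cE}(-,E)|_{\cC}$ and prove the key claim that each $\Hom_{\cE}(-,E)$ is a $J$-sheaf. Here a matching family over a cover $\{f_i\colon C_i \rarr C\}$ assembles, via the disjointness and pullback-stability of coproducts (i) and the identification of epimorphisms with coequalizers (ii), into a single cone out of the diagram $\coprod_{i,j} C_i \times_C C_j \rrarr \coprod_i C_i \rarr C$; stable exactness (iv) ensures this fork is preserved, so the amalgamation exists and is unique. Thus $y$ factors through $\ShCJ$. Faithfulness and fullness of $y$ then follow from the generating property: morphisms are detected and induced by their restrictions along arrows out of the $C_i$.

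The last and hardest step is essential surjectivity. I would construct a left adjoint $L$ to $y$ by the usual Hom--tensor (colimit) formula, writing an arbitrary sheaf $F$ as a colimit of representables $\Hom_{\cE}(-,C_i)$ and setting $L(F)$ to be the corresponding colimit in $\cE$, which exists by the coproducts of (i) and the coequalizers of (ii)--(iii). The main obstacle is showing the unit and counit of this adjunction are isomorphisms: one must verify that $y$ preserves the colimit presentation of each sheaf, so that $y L(F) \cong F$, and this is precisely where the full strength of the stability axioms (i) and (iv) is consumed, guaranteeing that the coequalizer computing $L(F)$ is carried by $y$ back to the coequalizer that presents $F$.
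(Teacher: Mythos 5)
Your proposal is correct in outline, but from the point where the site is built it follows the classical Hom--tensor route of \cite{ML}, which is precisely the route this paper set out to avoid. The first half coincides with the paper: both take $\cC$ to be the full subcategory spanned by a small generating set, both topologize it by declaring jointly epimorphic families to be covers, and both verify the basis axioms using (i), (ii) and (iv) (the paper's Lemma 3.2). Your proof of the sheaf condition --- assembling a matching family into a cone on the fork $\coprod_{i,j} C_i\times_C C_j \rrarr \coprod_i C_i \rarr C$ and factoring through the coequalizer --- is if anything cleaner than the paper's argument, which instead compares the two pullbacks $C_i\times_C C_j$ and $C_i\times_E C_j$ and extracts the amalgamation from the unique comparison map $r$; note only a misattribution on your part: (iv) is not what drives this step, since once $\coprod_i C_i \rarr C$ is the coequalizer of its kernel pair (from (ii) and finite limits, with (i) identifying that kernel pair as $\coprod_{i,j} C_i\times_C C_j$), the unique amalgamation is immediate because $\Hom_{\cE}(-,E)$ turns the coequalizer into an equalizer. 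Where you genuinely diverge is the second half. You prove the equivalence $\cE \simeq \ShCJ$ via the adjunction $L \dashv y$, verifying that the unit and counit are isomorphisms; this requires cocompleteness of $\cE$ (App.2 of \cite{ML}, which is where (iii) is consumed) and the hard verification that $y$ carries the colimit presentation of a sheaf back to that sheaf (App.3 of \cite{ML}, consuming disjointness from (i) and stable exactness from (iv)). The paper deliberately bypasses this: it argues that each object $E$ is equivalently given by $\coprod_{C_i \in \Ob(\cC)} \Ed(C_i)$ together with gluing data encoded by matching families and unique amalgamations, so that $E$ is identified with the sheaf $\Ed$ itself, and then checks that morphisms of $\cE$ are exactly morphisms of sheaves. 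What each approach buys is clear: the paper's route is shorter and realizes the stated aim of exhibiting objects of $\cE$ as sheaves by construction, but as written it never addresses essential surjectivity --- it does not show that an arbitrary $J$-sheaf arises as $\Ed$ for some $E \in \Ob(\cE)$ --- whereas your adjunction argument, though heavier and only sketched at the unit/counit stage, is exactly the mechanism that closes this gap and yields a genuine equivalence of categories. In that sense your plan, once the colimit-preservation details are filled in, is the more complete of the two arguments, at the cost of the machinery the paper was trying to dispense with.
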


That a Grothendieck topos satisfies those conditions is not difficult to show and this is done in \cite{ML} in particular so we will not repeat it here. What is more interesting is the converse statement. By (v) there is a small set of objects of $\cE$ that generates $\cE$. Let $\cC \subseteq \cE$ be the small, full subcategory of $\cE$ whose objects are those generators. Then for any $E$ object of $\cE$, the set of all morphisms $C_i \rarr E$, $C_i \in \Ob(\cC)$ forms an epimorphic family. By assumption $\cE$ has all finite limits, in particular it has pullbacks. We define a basis $K$ for a Grothendieck topology $J$ on $\cC$: $\forall C \in \Ob(\cC)$, $K(C)$ is an epimorphic family of maps into $C$, as suggested by having $\cC$ generate $\cE$.
\begin{Lemma}
$K$ is a basis for a Grothendieck topology on $\cC$.
\end{Lemma}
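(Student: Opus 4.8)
The plan is to verify the three defining axioms of a basis (Grothendieck pretopology) for the assignment $C \mapsto K(C)$, where $K(C)$ consists of the epimorphic families $\{f_i : C_i \rarr C \mid i \in I\}$ with each $C_i \in \Ob(\cC)$, ``epimorphic'' being understood in $\cE$. Recall that such a $K$ must satisfy: (a) for every isomorphism $f : C' \rarr C$ the singleton $\{f\}$ lies in $K(C)$; (b) stability under base change, i.e. if $\{f_i : C_i \rarr C\} \in K(C)$ and $g : D \rarr C$ is any arrow, then the family of pullback projections $\{\pi_i : C_i \times_C D \rarr D\}$ lies in $K(D)$; and (c) transitivity, i.e. if $\{f_i : C_i \rarr C\} \in K(C)$ and $\{g_{ij} : D_{ij} \rarr C_i\} \in K(C_i)$ for every $i$, then the composite family $\{f_i g_{ij} : D_{ij} \rarr C\}$ lies in $K(C)$. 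The pullbacks needed in (b) exist because $\cE$ has all finite limits.

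Axioms (a) and (c) are the routine ones. For (a), an isomorphism is in particular an epimorphism, so $\{f\}$ is trivially an epimorphic family. For (c), I would run the obvious two-step diagram chase: given $u, v : C \rarr X$ in $\cE$ with $u f_i g_{ij} = v f_i g_{ij}$ for all $i, j$, the fact that each $\{g_{ij}\}_j$ is epimorphic forces $u f_i = v f_i$ for every $i$, and then the fact that $\{f_i\}$ is epimorphic forces $u = v$; hence the composite family is epimorphic. No Giraud axiom beyond the definition is needed here.

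I expect axiom (b), stability under base change, to be the main obstacle, and this is exactly where Giraud's condition (i) enters. Since $\cE$ has all small coproducts, the family $\{f_i : C_i \rarr C\}$ is epimorphic precisely when the induced arrow $e : \coprod_i C_i \rarr C$ is an epimorphism. Pulling $e$ back along $g : D \rarr C$ and invoking the stability of coproducts under pullback from (i), I would identify $(\coprod_i C_i) \times_C D$ with $\coprod_i (C_i \times_C D)$, so that the pulled-back arrow is the one induced by the projections $\{\pi_i : C_i \times_C D \rarr D\}$. It then remains to check that this arrow is still an epimorphism; this is where I would use that in $\cE$ epimorphisms are coequalizers by (ii) and that exact forks are stably exact by (iv), which together yield that regular epimorphisms are stable under pullback. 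Consequently $\{\pi_i\}$ is an epimorphic family covering $D$.

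One genuine subtlety remains: the pullback objects $C_i \times_C D$ are formed in $\cE$ and need not themselves be generators, so the family $\{\pi_i\}$ need not consist of arrows out of objects of $\cC$. To keep the axiom internal to $\cC$, I would precompose: since $\cC$ generates $\cE$, for each $i$ the maps $\{h : C' \rarr C_i \times_C D \mid C' \in \Ob(\cC)\}$ form an epimorphic family, and composing these with the $\pi_i$ yields an epimorphic family out of generators into $D$, which lies in $K(D)$ and refines the pullback family. Verifying that this refinement suffices --- equivalently, adopting the sieve-theoretic reformulation in which base change sends covering sieves to covering sieves with no constraint on the domains --- is the only delicate bookkeeping, and the conceptual content is carried entirely by the stability statements (i), (ii) and (iv).
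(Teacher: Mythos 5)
Your proof is correct and follows essentially the same route as the paper's: for (a) isomorphisms are epimorphisms; for (b) you pass to the single arrow $\coprod_i C_i \rarr C$, use (ii) to write this epi as the coequalizer of its kernel pair, (iv) to conclude that the pulled-back fork is still exact so the pulled-back arrow is again a (regular) epi, and (i) to identify $(\coprod_i C_i)\times_C D$ with $\coprod_i(C_i\times_C D)$; and for (c) your two-step chase with $u,v: C \rarr X$ is just the elementwise version of the paper's argument that the composite $\coprod_{i,j} D_{ij} \rarr \coprod_i C_i \rarr C$ of epis is an epi. The one place you go beyond the paper is your final paragraph, and it is a genuine improvement rather than a deviation: the pullbacks $C_i \times_C D$ are formed in $\cE$ and need not be objects of $\cC$, so the pulled-back family is not literally a family of morphisms of $\cC$ and cannot, strictly speaking, belong to $K(D)$ under the pretopology axioms, which presuppose either pullbacks in $\cC$ or a weakened base-change axiom. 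The paper silently elides this by defining $K(C)$ loosely as an epimorphic family of maps into $C$ and then passing to the generated topology $J=(K)$. Your repair --- refining each projection $C_i\times_C D \rarr D$ by the epimorphic family of all maps from generators into $C_i\times_C D$, so that the composite family has domains in $\cC$, lies in $K(D)$, and factors through the $f_i$ --- is exactly the weakened base-change condition appropriate for a category without pullbacks (equivalently, the sieve-theoretic formulation), and it generates the same topology; so your write-up is slightly more careful than the paper's at no extra conceptual cost.
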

\begin{proof}
If $f: C' \rarr C$ is an isomorphism, then $\{C' \rarr C\}$ is an epimorphic family, so it belongs to $K(C)$. Consider $\{f_i: C_i \rarr C \quad | \quad i \in I \} \in K(C)$, and any morphism $g: D \rarr C$. Then considering the pullback:
\beq
\begin{CD}
(\coprod_{i \in I} C_i) \times_C D @>\tilde{f}>> D \\
@VVV @VVgV \\
\coprod_{i \in I} C_i @>>f> C
\end{CD}
\eeq
if $\{f_i \} \in K(C)$, then $f: \coprod_{i \in I} C_i \rarr C$ is an epi, and by (ii) it is a coequalizer of a pair of arrows, so it is a coequalizer of its kernel pair, hence we have an exact fork:
\beq
R  \rrarr \coprod_{i \in I} C_i \xrarr{f} C
\nonumber
\eeq
stable by pullback by (iv) so:
\beq
R \times_C D \rrarr (\coprod_{i \in I} C_i) \times_C D \xrarr{f \times 1} C \times_C D = D
\nonumber
\eeq
is exact, which implies that $\tilde{f}=f \times 1$ is a coequalizer, so it is an epi. At this point it is worth mentioning that $\cE$ has all coequalizers as proved in Appendix.2.1 of \cite{ML}, whose proof uses (iii), something we do not use otherwise. Now by (i) coproducts are stable by pullback so:
\beq
(\coprod_{i \in I} C_i) \times_C D = \coprod_{i \in I} C_i \times_C D
\nonumber
\eeq
thus $\coprod_{i \in I}(C_i \times_C D) \rarr D$ is an epi so $\{ C_i \times_C D \rarr D \quad | \quad i \in I \}$ is an epimorphic family, so it is in $K(D)$. Finally if $\{f_i : C_i \rarr C \quad | \quad i \in I \} \in K(C)$ and for all $i$ we have $\{ g_{ij}:D_{ij} \rarr C_i \quad | \quad j \in J_i \} \in K(C_i)$, then $f: \coprod_{i \in I} C_i \xrarr{f_i} C$ is an epi, $g_i: \coprod_{j \in J_i}D_{ij} \xrarr{g_{ij}} C_i$ is an epi for all $i \in I$, so in particular $g: \coprod_{i \in I}(\coprod_{j \in J_i}D_{ij}) \xrarr{g_i} \coprod_{i \in I}C_i$ is an epi, from which it follows by composition that
\beq
f \circ g: \coprod_{\substack{i \in I \\ j \in J_i}} D_{ij} \rarr C \nonumber
\eeq
is an epi as well, so that the family $\{D_{ij} \rarr C \}$ is in $K(C)$, which completes the proof of the lemma.
\end{proof}
Now we let $J = (K)$ be the Grothendieck topology on $\cC$ generated by $K$. In the classical proof one uses a Hom-tensor adjunction to prove that $\cE$ is a Grothendieck topos. At some point one proves that $\text{Hom}_{\cE}(-, E): \cC^{op} \rarr \text{Sets}$ is a sheaf for $J$. What we do instead is prove that $E$ itself is a sheaf by focusing on its structure rather than invoking the Yoneda embedding.\\

Since $\cC$ generates $\cE$, then an object $E$ of $\cE$ is equivalently given by any of the sets:
\beq
\Ed(C_i) = \{g_i: C_i \rarr E \} \nonumber
\eeq
for $C_i \in \Ob(\cC)$, as well as how the different images in $E$ glue. If we denote by $+$ the operation of collecting morphisms together and by $\cup$ the gluing, then one can write $E = \Ed(C_i)\mathbf{+} \cup_i$ for any $C_i \in \Ob(\cC)$, since the gluing depends on $i$. Now $\cE$ is generated by $\cC$ is a statement about objects of $\cE$. $\cE$ being generated by $\cC$ means two morphisms $u,v:E \rarr E'$ are different implies there is an object $C_i$ of $\cC$ and there is a morphism $f:C_i \rarr E$ such that $uf \neq vf$. Thus to fully characterize $E$ as an object of $\cE$, one needs all such sets $\Ed(C_i)$, thus $E$ is equivalently given by:
\begin{align}
\Ed \mathbf{+}\cup: \cC^{op} & \rarr \text{Sets} \nonumber \\
C_i & \mapsto \Ed(C_i) + \cup_i
\end{align}
or:
\beq
\coprod_{C_i \in \Ob(\cC)} \Ed(C_i) + \cup_i
\eeq
Now we will consider functors $\cC^{op} \rarr \text{Sets}$ that can be represented as $\coprod_{C_i \in \Ob(\cC)} \Ed(C_i)$ without mention of the gluing. Since we have dropped any notion of gluing on each $\Ed(C_i)$, the question is what structure do we need to add to $\coprod_{C_i \in \Ob(\cC)} \Ed(C_i)$ to obtain an object equivalent to $E$. We will first consider $E$ and see we can extract from it $\coprod_{C_i \in \Ob(C_i)} \Ed(C_i)$ with some additional structure. Then starting from some such coproduct we will deduce what needs to be added to get back $E$. Note that if subparts of $E$ glue in $\cE$, since $\cC$ generates $\cE$, then we have morphisms $x: C_i \rarr E$ and $y: C_j \rarr E$ that correspond to such a gluing, and gluing such maps in particular yields a map $C_i \times C_j \rarr E$ with a restriction $C_i \times_{E} C_j \rarr E$, hence a matching family:
\beq
\setlength{\unitlength}{0.5cm}
\begin{picture}(10,7)(0,0)
\thicklines
\put(2,0){$\Ed(C_i)$}
\put(0,5){$\Ed(C_i \times_E C_j)$}
\put(9,5){$\Ed(C_j)$}
\put(3,1){\vector(0,1){3}}
\put(8.5,5){\vector(-1,0){3}}
\end{picture}
\eeq
Note that the existence of such a matching family just means we have two morphisms $C_i \rarr E$, $C_j \rarr E$ that can be glued since they yield a morphism $C_i \times_E C_j \rarr E$, but it does not provide such a gluing. The gluing itself is given by a map $E \xrarr{\cup} E$, hence an element of $\Ed(E)$. If both $C_i \rarr E$ and $C_j \rarr E$ factor through $\cup$, then the composition provides the gluing of such maps. This also provides maps:
\beq
\setlength{\unitlength}{0.5cm}
\begin{picture}(10,7)(0,0)
\thicklines
\put(2,2){$\Ed(C_i)$}
\put(8.5,6){$\Ed(C_j)$}
\put(8.5,2){$\Ed(E)$}
\put(8,2){\vector(-1,0){3}}
\put(9,3){\vector(0,1){2.5}}
\end{picture}
\eeq
hence an amalgamation. Thus $E$ can be represented as:
\beq
E \simeq \bigcup_{\substack{C_i \times_E C_j \\ \text{amalg}}} \coprod_{C_i \in \Ob(\cC)} \Ed(C_i)
\eeq
Now from $\coprod_{C_i \in \Ob(\cC)} \Ed(C_i)$, matching families over $E$ provide an association of morphisms $C_i \rarr E$, $C_j \rarr E$ that can be glued. The existence of an amalgamation if unique provides such a gluing:
\beq
\bigcup_{\substack{C_i \times_E C_j \\ !\text{amalg}}} \coprod_{C_i \in \Ob(\cC)} \Ed(C_i) \simeq E \nonumber
\eeq
making the sheaf $\Ed = \Hom(-,E)$ equivalent to $E$. This will be the object of Proposition \ref{sheaf} below. Further if $\Ed(C) = \{g:C \rarr E \}$ and $f: C \rarr C'$ then we have an induced map:
\beq
\Ed(C) \xleftarrow{ \cdot f} \Ed(C') \ni x'
\eeq
with:
\begin{align}
x' \cdot f &= \{g':C' \rarr E \} \cdot f \nonumber \\
&=\{g'f: C \rarr C' \rarr E \}
\end{align}

\begin{sheaf} \label{sheaf}
Objects of $\cE$ are sheaves for $J$ on $\cC$ given above.
\end{sheaf}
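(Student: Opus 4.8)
The plan is to prove the statement by verifying the sheaf condition directly, exploiting the structural description of covers in the basis $K$. Since $J = (K)$ is the topology generated by $K$, a presheaf on $\cC$ is a sheaf for $J$ if and only if it satisfies the amalgamation condition for every covering family in $K$; so it is enough to fix an object $E$ of $\cE$, identify it with the presheaf $\Ed = \Hom_{\cE}(-,E)$ on $\cC^{op}$, and check the sheaf axiom against each $\{f_i : C_i \rarr C \mid i \in I\} \in K(C)$. The engine of the whole argument is Giraud's axiom (ii): covering families are epimorphic by the definition of $K$, and (ii) promotes epimorphisms to coequalizers.

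First I would assemble the cover into a single arrow $f: \coprod_{i \in I} C_i \rarr C$, which is an epimorphism because the family is epimorphic. By (ii) it is a coequalizer, and since $\cE$ has all finite limits it is the coequalizer of its own kernel pair $R \rrarr \coprod_i C_i$, giving an exact fork
\[
R \rrarr \coprod_{i \in I} C_i \xrarr{f} C .
\]
Next I would make $R$ explicit: the kernel pair is the pullback $(\coprod_i C_i) \times_C (\coprod_j C_j)$, and by stability of coproducts under pullback, axiom (i), this decomposes as $\coprod_{i,j}(C_i \times_C C_j)$, with the two structure maps restricting on the $(i,j)$ summand to the projections $\pi_{ij}^{(1)}, \pi_{ij}^{(2)}$ of diagram \eqref{CDone}. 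The fork then reads
\[
\coprod_{i,j}(C_i \times_C C_j) \rrarr \coprod_{i \in I} C_i \xrarr{f} C .
\]

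The decisive step is to apply the representable functor $\Ed = \Hom_{\cE}(-,E)$. As a contravariant hom-functor it turns the coequalizer $f$ into an equalizer and sends coproducts to products, yielding
\[
\Ed(C) \rarr \prod_{i} \Ed(C_i) \rrarr \prod_{i,j}\Ed(C_i \times_C C_j) .
\]
Reading this off is exactly the sheaf axiom: an element of the equalizer is a family $(x_i)$ with $x_i \cdot \pi_{ij}^{(1)} = x_j \cdot \pi_{ij}^{(2)}$ for all $i,j$, i.e.\ a matching family, and the equalizer property produces a unique $x \in \Ed(C)$ with $x \cdot f_i = x_i$, i.e.\ the unique amalgamation. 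Hence $\Ed$ is a sheaf, and $E$ is recovered as this sheaf as anticipated in the discussion preceding the proposition.

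I expect the main obstacle to be bookkeeping rather than conceptual: one must check that the kernel pair of $f$ really is $\coprod_{i,j} C_i \times_C C_j$ \emph{with the correct projections}, so that the two parallel maps produced by $\Ed$ are literally $(x_i) \mapsto (x_i \cdot \pi_{ij}^{(1)})$ and $(x_i) \mapsto (x_j \cdot \pi_{ij}^{(2)})$ and the resulting equalizer is the matching-family condition on the nose, not merely up to a reindexing isomorphism. This is where disjointness and pullback-stability of coproducts from (i) have to be used carefully, together with the reduction from the topology $J$ to its basis $K$. By contrast the conversion of the coequalizer into an equalizer by $\Hom_{\cE}(-,E)$ is purely formal, so once the fork is in the displayed form the sheaf condition follows immediately.
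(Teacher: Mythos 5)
Your argument is correct, but it is not the paper's argument: you have reconstructed the classical Mac Lane--Moerdijk proof that the paper explicitly sets out to avoid (``What we do instead is prove that $E$ itself is a sheaf by focusing on its structure rather than invoking the Yoneda embedding''). You assemble the cover into a single epi $f: \coprod_i C_i \rarr C$, use (ii) to realize $f$ as the coequalizer of its kernel pair, identify that kernel pair with $\coprod_{i,j}(C_i \times_C C_j)$ via pullback-stability of coproducts, and let the limit-preservation of the contravariant hom convert the exact fork into the equalizer $\Ed(C) \rarr \prod_i \Ed(C_i) \rrarr \prod_{i,j} \Ed(C_i \times_C C_j)$, which is verbatim the sheaf condition for the basis $K$ (together with the standard reduction from $J=(K)$ to $K$). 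The paper instead works one pair $(i,j)$ at a time: from the matching condition $g_i \pi_{ij}^{(1)} = g_j \pi_{ij}^{(2)}$ it obtains, by the universal property of the pullback $C_i \times_E C_j$, a unique comparison map $r: C_i \times_C C_j \rarr C_i \times_E C_j$ commuting with the projections, and then extracts from $r$ a map $g: C \rarr E$ serving as the amalgamation, unique because $r$ is. Your route buys rigor and global bookkeeping: the equalizer treats all indices simultaneously, so the amalgamation is automatically one $x \in \Ed(C)$ with $x \cdot f_i = x_i$ for every $i$, whereas the paper's step from $r$ to $g$ is informal (a map between pullbacks commuting with projections does not in general induce a map of the underlying cospans), and the independence of its $g$ from the chosen pair $(i,j)$ is left implicit; the paper's route buys its announced methodological point of reading the sheaf property off the fibered-product structure of $E$ directly, and within the proposition proper it uses only finite limits, having pushed the heavy axioms into the earlier lemma on $K$, while your proof invokes (i) and (ii) directly. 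One small correction to your final remark: disjointness of coproducts plays no role in your kernel-pair computation --- two applications of pullback-stability from (i) suffice to get $(\coprod_i C_i) \times_C (\coprod_j C_j) \simeq \coprod_{i,j}(C_i \times_C C_j)$ with the projections restricting to $\pi_{ij}^{(1)}$ and $\pi_{ij}^{(2)}$ on each summand.
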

\begin{proof}
Given:
\beq
\begin{CD}
C_i \times_C C_j @>\pi_{ij}^{(2)}>> C_j \\
@V\pi_{ij}^{(1)}VV @VVf_jV \\
C_i @>>f_i> C
\end{CD}
\eeq
and assuming we have a matching family as in:
\beq
\setlength{\unitlength}{0.5cm}
\begin{picture}(16,9)(0,0)
\thicklines
\put(2.5,1){$\Ed(C_i) \ni x_i$}
\put(12,1){$?$}
\put(12,7){$x_j \in \Ed(C_j)$}
\put(0,7){$\Ed(C_i \times_C C_j) \ni \alpha$}
\put(3,4){$\Ed(\pi_{ij}^{(1)})$}
\put(8,8){$\Ed(\pi_{ij}^{(2)})$}
\put(6,2){\vector(0,1){4}}
\multiput(7,1)(0.5,0){8}{\line(1,0){0.3}}
\put(7.5,1){\vector(-1,0){0.5}}
\multiput(12.2,2)(0,0.5){8}{\line(0,1){0.3}}
\put(12.2,5.5){\vector(0,1){0.5}}
\put(11,7){\vector(-1,0){4}}
\end{picture}
\nonumber
\eeq
we prove there is a unique amalgamation. The existence of a matching family can be rephrased as:
\begin{align}
x_i & = g_i: C_i \rarr E \nonumber \\
x_j & = g_j: C_j \rarr E
\end{align}
and $x_i \cdot \pi_{ij}^{(1)} = x_j \cdot \pi_{ij}^{(2)}$ means
\beq
\setlength{\unitlength}{0.5cm}
\begin{picture}(16,4)(0,0)
\thicklines
\put(0,0){$g_i \cdot \pi_{ij}^{(1)}: \, C_i \times_C C_j \xrarr{\pi_{ij}^{(1)}} C_i \xrarr{g_i} E$}
\put(1,1){\line(0,1){1}}
\put(1.2,1){\line(0,1){1}}
\put(0,2.5){$g_j \cdot \pi_{ij}^{(2)}: C_i \times_C C_j  \xrarr{\pi_{ij}^{(2)}} C_j \xrarr{g_j} E$}
\end{picture}
\eeq
so both squares below commute, the smaller one because it is the pullback in $\cC$ we started from, the larger one if we have a matching family:
\beq
\setlength{\unitlength}{0.5cm}
\begin{picture}(20,14)(0,0)
\thicklines
\put(7,5){$C_i$}
\put(14.5,5){$C$}
\put(15,11){$C_j$}
\put(5.5,11){$C_i \times_C C_j$}
\put(17.5,1){$E$}
\put(8,5){\vector(1,0){6}}
\put(15,10){\vector(0,-1){4}}
\put(7,10){\vector(0,-1){4}}
\put(9,11){\vector(1,0){5}}
\put(5,8){$\pi_{ij}^{(1)}$}
\put(11,12){$\pi_{ij}^{(2)}$}
\put(12,2){$g_i$}
\put(17.3,7){$g_j$}
\put(8,4.5){\vector(3,-1){8.5}}
\put(16,10){\vector(1,-4){1.8}}
\end{picture}
\eeq
which implies there is a unique map $r: C_i \times_C C_j \rarr C_i \times_E C_j$ as in:
\beq
\setlength{\unitlength}{0.5cm}
\begin{picture}(20,17)(0,0)
\thicklines
\put(7,5){$C_i$}
\put(14.5,5){$C$}
\put(14.5,10.5){$C_j$}
\put(5.5,11){$C_i \times_C C_j$}
\put(17.5,1){$E$}
\put(8,5){\vector(1,0){6}}
\put(15,10){\vector(0,-1){4}}
\put(7,10){\vector(0,-1){4}}
\put(9,11){\vector(1,0){5}}
\put(5,8){$\pi_{ij}^{(1)}$}
\put(11,12){$\pi_{ij}^{(2)}$}
\put(12,2){$g_i$}
\put(17.3,7){$g_j$}
\put(8,4.5){\vector(3,-1){8.5}}
\put(16,10){\vector(1,-4){1.8}}
\put(1,16){$C_i \times_E C_j$}
\multiput(6,12)(-0.3,0.3){11}{\circle*{0.15}}
\put(3,15){\vector(-1,1){0.5}}
\put(5.5,13){$\exists \, ! \, r$}
\qbezier(2,15)(2,5)(6,5)
\put(6,5){\vector(1,0){0.5}}
\qbezier(5,16)(15,16)(15,12)
\put(15,12){\vector(0,-1){0.5}}
\put(0.5,10){$\pi_{ij}^{(1)}$}
\put(10,16){$\pi_{ij}^{(2)}$}
\end{picture}
\eeq
Since maps from the fibered products are projections, it follows that $r$ is the identity on $C_i$ and $C_j$, but maps one fibered product into the other, so is a map on the structure of such products other than their factors, hence it is a map from $C_i \rarr C \leftarrow C_j$ to $C_i \rarr E \leftarrow C_j$, i.e. it is a map $C \rarr E$. Finally because $r$ is unique so is this map. So $\exists \, ! \, g:C \rarr E$ which is an element $x \in \Ed(C)$ such that $g \cdot f_j = g_j$ and $g \cdot f_i = g_i$, i.e. $x \cdot f_j = x_j$ and $x \cdot f_i = x_i$. This for all morphisms $C_i \rarr E$, $C_j \rarr E$, so $\Ed$ is a sheaf, hence so is $E \simeq \Ed$ by the argument preceding the Proposition,  and this for all objects $E$ of $\cE$, so objects of $\cE$ are sheaves.
\end{proof}

Now if $E,F \in \Ob(\cE)$, $\alpha: E \rarr F$ a morphism, we show it is a natural transformation of sheaves. Let $C$ be an object of $\cC$. Then $\alpha$ induces a map $\Ed(C) \xrarr{\alpha_C} \Fd(C)$. Further if $f:C \rarr C'$ is a morphism in $\cC$ then we have maps $\Ed(C) \xleftarrow{\cdot f} \Ed(C')$ and $\Fd(C) \xleftarrow{\cdot f} \Fd(C')$. We show the following square is commutative:
\beq
\setlength{\unitlength}{0.5cm}
\begin{picture}(10,8)(0,0)
\thicklines
\put(2,2){$\Ed(C')$}
\put(2,6){$\Ed(C)$}
\put(8.5,6){$\Fd(C)$}
\put(3,3){\vector(0,1){2.5}}
\put(2,4){$\cdot f$}
\put(5,6){\vector(1,0){3}}
\put(6,7){$\alpha_C$}
\put(8.5,2){$\Fd(C')$}
\put(5,2){\vector(1,0){3}}
\put(6,1){$\alpha_{C'}$}
\put(9,3){\vector(0,1){2.5}}
\put(9.5,4){$\cdot f$}
\end{picture}
\eeq
Denote by $p:C' \rarr E$ an element of $\Ed(C')$. Consider the following path:
\beq
\setlength{\unitlength}{0.5cm}
\begin{picture}(10,7)(0,1)
\thicklines
\put(2,2){$\Ed(C')$}
\put(2,6){$\Ed(C)$}
\put(8.5,6){$\Fd(C)$}
\put(3,3){\vector(0,1){2.5}}
\put(2,4){$\cdot f$}
\put(5,6){\vector(1,0){3}}
\put(6,7){$\alpha_C$}
\end{picture}
\eeq
Under this path by precomposition with $f$, $p$ is mapped to $p \circ f = p \cdot f$ in $\Ed(C)$, which is then mapped under $\alpha$ to $\alpha \circ ( p \circ f) \in \Fd(C)$. Now consider the lower path:
\beq
\setlength{\unitlength}{0.5cm}
\begin{picture}(10,7)(0,0)
\thicklines
\put(2,2){$\Ed(C')$}
\put(8.5,6){$\Fd(C)$}
\put(8.5,2){$\Fd(C')$}
\put(5,2){\vector(1,0){3}}
\put(6,1){$\alpha_{C'}$}
\put(9,3){\vector(0,1){2.5}}
\put(9.5,4){$\cdot f$}
\end{picture}
\eeq
Under this path $p$ is mapped by composition with $\alpha$ to $\alpha \circ p \in \Fd(C')$, which is then mapped by precomposition with $f$ to $(\alpha \circ p) \circ f \in \Fd(C)$. Composition being associative, those elements coincide and the square commutes, and this for all $C$, $C'$, so $\alpha$ is a natural transformation. Further by commutativity of such squares it also follows easily that $\alpha$ maps matching families to matching families and unique amalgamations to unique amalgamations, so $\alpha$ is a morphism of sheaves, and thus $\cE$ is a Grothendieck topos, which completes the proof of the theorem.

\section{Categories of Representations and Giraud}
For $\cC$ a small category, $R$ a commutative ring with a unit, the functor category $\RModCop$ is called a category of representations (\cite{X}, \cite{W}). One may ask what are the conditions on a category to be equivalent to such a category of representations. Before doing so we have to introduce some terminology.
\begin{abOp}
We define an abelian operad to be a 2-sided $\Sigma$-equivariant topological operad. The multiplication is denoted + which if commutative makes this operad $\Sigma$-equivariant, which motivates the name abelian. The operad is topological insofar as it is depicted by pairs of pants and maps isomorphic tensor products of objects $a$ and $b$ to $a+b$. It is 2-sided in the sense that $+(a \otimes b) = a+b $ can be read from left to right, or from right to left, as in:
\beq
\setlength{\unitlength}{0.5cm}
\begin{picture}(7,7)(0,0)
\thinlines
\put(1,6){\oval(2,2)[b]}
\put(6,6){\oval(2,2)[b]}
\thicklines
\thicklines
\put(1,6){\oval(2,2)[t]}
\put(6,6){\oval(2,2)[t]}
\put(3.5,1){\oval(3,2)}
\put(0,4){\line(0,1){2}}
\qbezier(0,4)(2,3)(2,1)
\put(3.5,6){\oval(3,4)[b]}
\put(7,4){\line(0,1){2}}
\qbezier(7,4)(5,3)(5,1)
\put(1,6){$a$}
\put(6,6){$b$}
\put(2.5,1){$a +b$}
\end{picture}
\eeq
which corresponds to grouping $a$ and $b$ according to $+(a \otimes b) = a+b$, or it can be represented as:
\beq
\setlength{\unitlength}{0.5cm}
\begin{picture}(7,7)(0,0)
\thinlines
\put(3.5,6){\oval(3,2)[b]}
\thicklines
\thicklines
\put(3.5,6){\oval(3,2)[t]}
\put(1,1){\oval(2,2)}
\put(6,1){\oval(2,2)}
\put(3.5,1){\oval(3,4)[t]}
\put(0,1){\line(0,1){2}}
\qbezier(0,3)(2,4)(2,6)
\put(7,1){\line(0,1){2}}
\qbezier(7,3)(5,4)(5,6)
\put(1,1){$a$}
\put(6,1){$b$}
\put(2.5,6){$a +b$}
\end{picture}
\eeq
\end{abOp}

\begin{cohabop}
A coherent abelian operad $\cO$ on a symmetric monoidal category $\cC$ with duals is an abelian operad whose operation is compatible with the existence of duals in $\cC$ in the following sense: the unit object 1 of $\cC$ is a unit for the operation $\bot$ of $\cO$: $a \bot 1 = 1 \bot a = a$, and duals map under $\cO$ to inverses for $\bot$: $a \otimes a^* \mapsto a \bot a^{-1} = 1$, $a^* \otimes a \mapsto a^{-1} \bot a = 1$ for any $a$.
\end{cohabop}
Note that since in a monoidal category we have natural isomorphisms $\rho: - \otimes 1 \Rarr id$ and $\lambda: 1 \otimes - \Rarr id$, the unit object is necessarily a unit for any operation $\bot$  of an abelian operad $\cO$ on $\cC$.
\begin{Onm}
Let $\cC$ be a fixed symmetric monoidal category with duals, $\cO$ an abelian operad defined on $\cC$. Denote $\cO$ by $\Oenm$ if it has $n$ operations, $m$ of which are coherent with the structure of $\cC$. $\Oenm$ is said to be sub-coherent if $n>m$.
\end{Onm}
\begin{umbrella}
Let $\cC$, $\cD$ be two symmetric monoidal categories with duals with respective sub-coherent abelian operads $\Oenm$ and $\Pqr$ defined on them. Construct a monoidal category $\cU$ by taking as objects $\Ob(\cU) = \Ob(\cC) \cup \Ob(\cD)$, $\otimes_{\cU}\big|_{\cC} = \otimes_{\cC}$, $\otimes_{\cU}\big|_{\cD} = \otimes_{\cD}$, the associator $A$ on $\cU$, as well as the natural isomorphisms $R:- \otimes_{\cU} 1 \Rarr id$ and $L: 1 \otimes_{\cU} - \Rarr id$ restrict to those on $\cC$ and $\cD$. Additionally, with a view towards applications to left $R$-modules, we regard tensor products as ordered in the sense that for $r \in \cD$, $a \in \cC$, $r \otimes a \in \cU$, but $a \otimes r$ is not defined. On $\cU$ we define a sub-coherent abelian operad $\Skl$ that restricts to $\Oenm$ on $\cC$ and $\Pqr$ on $\cD$, and constitutes an injective enrichment of coherent operads on $\cC$ and $\cD$ in the following sense: if $a,b \in \cC$ (resp. $\cD$), $\bot$ a coherent operation on $\cC$ (resp. $\cD$), there is a corresponding operation $\bot_{\cU}$ on $\cD \otimes \cC$ that generalizes $\bot$ such that if $\lambda \in \cU$ (resp. $\cD$), $\diamond$ any non-coherent operation on $\cU$ (resp. $\cD$):
\beq
\setlength{\unitlength}{0.5cm}
\begin{picture}(18,13)(0,0)
\thicklines
\put(0,4){$a \bot b$}
\put(6,4){$\lambda \diamond (a \bot b )$}
\put(6,0){$\lambda \otimes (a \bot b) $}
\put(14,4){$ \lambda \diamond a \bot  \lambda \diamond b$}
\put(14,8){$\lambda \diamond a \otimes  \lambda  \diamond b$}
\put(6,8){$\lambda \diamond(a \otimes b)$}
\put(0,8){$a \otimes b$}
\put(6,11.5){$\lambda \otimes (a \otimes b) $}
\put(3,4){\vector(1,0){2.8}}
\put(3,3){\vector(3,-2){3}}
\put(8,1){\vector(0,1){2}}
\put(8,7){\vector(0,-1){2}}
\put(10,4){\line(1,0){3}}
\put(10,4.5){\line(1,0){3}}
\put(15,7){\vector(0,-1){2}}
\put(10,8){\vector(1,0){3}}
\put(8,11){\vector(0,-1){2}}
\put(3,8){\vector(1,0){2.8}}
\put(3,9){\vector(3,2){3}}
\put(1,7){\vector(0,-1){2}}
\put(1.5,6){$\cO_{\bot}$}
\put(6,6){$\lambda  \cO_{\bot} $}
\put(11,8.5){$\sim$}
\multiput(10,7)(0.5,-0.2){8}{\circle*{0.2}}
\put(13.7,5.5){\vector(2,-1){0.5}}
\put(12,6.5){$\cO_{\bot_{\cU}}$}
\put(15.6,6){$\cO_{\bot_{\cU}}$}
\end{picture}
\eeq
This same operation $\bot_{\cU}$ satisfies the diagram below with $a,b \in \cC$ (resp. $\cD$), $\lambda \in \cC$ (resp. $\cU$), $\bot$ a coherent operation on $\cC$ (resp. $\cU$), $\diamond$ any non-coherent operation on $\cC$ (resp. $\cU$):
\beq
\setlength{\unitlength}{0.5cm}
\begin{picture}(18,13)(0,0)
\thicklines
\put(0,4){$a \bot b$}
\put(6,4){$(a \bot b ) \diamond \lambda$}
\put(6,0){$(a \bot b) \otimes \lambda$}
\put(14,4){$a \diamond \lambda \bot b \diamond \lambda$}
\put(14,8){$a \diamond \lambda \otimes b \diamond \lambda$}
\put(6,8){$(a \otimes b) \diamond \lambda$}
\put(0,8){$a \otimes b$}
\put(6,11.5){$(a \otimes b) \otimes \lambda$}
\put(3,4){\vector(1,0){3}}
\put(3,3){\vector(3,-2){3}}
\put(8,1){\vector(0,1){2}}
\put(8,7){\vector(0,-1){2}}
\put(10,4){\line(1,0){3}}
\put(10,4.5){\line(1,0){3}}
\put(15,7){\vector(0,-1){2}}
\put(10,8){\vector(1,0){3}}
\put(8,11){\vector(0,-1){2}}
\put(3,8){\vector(1,0){3}}
\put(3,9){\vector(3,2){3}}
\put(1,7){\vector(0,-1){2}}
\put(1.5,6){$\cO_{\bot}$}
\put(6,6){$\cO_{\bot} \lambda$}
\put(11,8.5){$\sim$}
\multiput(10,7)(0.5,-0.2){8}{\circle*{0.2}}
\put(13.7,5.5){\vector(2,-1){0.5}}
\put(12,6.5){$\cO_{\bot_{\cU}}$}
\put(15.6,6){$\cO_{\bot_{\cU}}$}
\end{picture}
\eeq
Thus non-coherent operations take precedence over coherent ones. In addition for non-coherent operations $\bot$ on $\cC$ or $\cD$, for $a,b \in \cC$ (resp. $\cD$), $\lambda \in \cC$ (resp. $\cU$), $\diamond$ any non-coherent operation on $\cU$:
\beq
\setlength{\unitlength}{0.5cm}
\begin{picture}(22,12)(0,0)
\thicklines
\put(1,2){$a \bot b$}
\put(8,2){$(a \bot b) \diamond \lambda $}
\put(17,2){$a \bot ( b \diamond \lambda)$}
\put(17,7){$a \bot(b \otimes \lambda)$}
\put(8,7){$(a \otimes b) \diamond \lambda$}
\put(1,7){$a \otimes b$}
\put(8,11){$(a \otimes b) \otimes \lambda$}
\put(17,11){$a \otimes ( b \otimes \lambda)$}
\put(4,2){\vector(1,0){3}}
\put(13,2){\line(1,0){3}}
\put(13,2.5){\line(1,0){3}}
\put(2,6){\vector(0,-1){3}}
\put(19,6){\vector(0,-1){3}}
\put(19,10){\vector(0,-1){2}}
\put(10,6){\vector(0,-1){3}}
\put(10,10){\vector(0,-1){2}}
\put(13,11){\vector(1,0){3}}
\put(4,7){\vector(1,0){3}}
\put(4,8){\vector(3,2){3}}
\put(14,10){$A$}
\put(1,5){$\bot$}
\put(18,5){$\diamond$}
\put(17.5,9){$\bot$}
\put(8.5,9){$\diamond$}
\put(9,5){$\bot$}
\end{picture}
\eeq
Such a category $\cU$ we call an umbrella category and we denote by $\cU = \widehat{\cD \otimes \cC}$.
\end{umbrella}

\begin{asscat}
Let $\Cnm$ be the category of symmetric monoidal categories with duals on which there is a sub-coherent abelian operad $\Oenm$ being defined. An umbrella category $\cU = \widehat{\cR \otimes \cC}$, $\cR \in \cC^{(2|1)}$, $\cC \in \cC^{(1|1)}$ we call an assembly category.
\end{asscat}
\begin{Noteasscat}
We do not define an operad in the sense of May (\cite{M}) as an operad defined in a monoidal category $\cD$, but on $\cD$, in the sense that objects of $\cD$ can be grouped together via an external operad $\cO$ to be defined.
\end{Noteasscat}

In what follows we refer to assembly categories of the form $\widehat{\cR \otimes \cC}$ as $\cR$-colored assembly categories. Observe that the operadic structure of $\cR \in \cC^{(2|1)}$ is given by a ring $R$. Conversely one can put a structure of a strict symmetric monoidal category on $R$ to produce an element $\cR \in \cC^{(2|1)}$ that by abuse of notation we will still denote by $R$. In the same manner an abelian group $M$ can be made into an element of $\cC^{(1|1)}$ by putting a structure of a strict symmetric monoidal category on it. More generally, one can put a structure of a strict umbrella category on $R \otimes M$, and in particular a structure of a strict assembly category, giving rise to something we call a strict $R$-colored assembly category. In establishing a connection between categories and categories of representations, it will be useful to have the following lemma:

\begin{RnCat} \label{RnCat}
There is a forgetful functor from the category of $\cR$-colored assembly categories to $\RMod$ for some commutative ring $R$ with a unit that encodes the operadic structure of $\cR$. Conversely every $R$-module $M$ corresponds to a strict $R$-colored assembly category.
\end{RnCat}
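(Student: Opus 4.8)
The plan is to prove the two directions of the lemma separately, reading the defining coherence diagrams of an umbrella category as the axioms of a module. For the forgetful functor I would first extract the ring from the colour: given $\cR \in \cC^{(2|1)}$, set $R = \Ob(\cR)$ and equip it with the two operations of its sub-coherent abelian operad, the coherent one furnishing an abelian group structure (commutativity coming from the $\Sigma$-equivariance, the zero being the monoidal unit, the additive inverses being the monoidal duals, exactly as in the definition of a coherent abelian operad) and the non-coherent one furnishing the multiplication, with $1$ the monoidal unit. This is the sense in which $R$ ``encodes the operadic structure of $\cR$,'' and $R$ is commutative precisely because the operad is abelian. I would then define $U$ on an $\cR$-colored assembly category $\widehat{\cR \otimes \cC}$ by taking the object set $\Ob(\cC)$, with addition the coherent operation of $\cC$ and scalar action the operation of $\cU$ applied to the ordered tensor $r \otimes a$, which by construction lands in $\cU$. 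On morphisms $U$ sends an operad- and monoidal-structure-preserving functor to its restriction to $\Ob(\cC)$; such a restriction automatically respects $+$ and the scalar action, hence is $R$-linear, and functoriality of $U$ on identities and composites is then immediate.

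For the converse I would run the construction indicated just before the lemma in reverse. Given an $R$-module $M$, I regard $R$ as a strict symmetric monoidal category with duals carrying the two-operation abelian operad above, that is, an object $\cR \in \cC^{(2|1)}$, and I regard the underlying abelian group of $M$ as a strict symmetric monoidal category with duals carrying the single coherent operation $+$, that is, an object $\cC \in \cC^{(1|1)}$. I would then assemble the strict umbrella category $\widehat{\cR \otimes \cC}$, declaring the ordered tensor $r \otimes m$ and the $\cU$-operation acting on it to be the scalar multiplication $r \cdot m \in M$. It remains only to check that this assignment satisfies the three coherence diagrams in the definition of an umbrella category, and this is exactly where the module axioms enter.

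The heart of the argument, and the step I expect to be the main obstacle, is the dictionary between those three diagrams and the module axioms, taking $\bot = +$ the coherent operation and $\diamond$ the scalar action. The first diagram, which asserts $r \diamond (m \bot n) \cong (r \diamond m) \bot (r \diamond n)$, should yield left distributivity $r(m+n) = rm + rn$; the second, in its $\cU$-variant where the right-hand factor is a scalar, should yield right distributivity $(r+s)m = rm + sm$; and the third, which routes through the associator $A$, should encode associativity of the action against ring multiplication, $(rs)m = r(sm)$, while the normalizing isomorphisms $- \otimes 1 \Rarr id$ and $1 \otimes - \Rarr id$ of $\cU$ give the unit law $1 \cdot m = m$. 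The genuinely delicate points are to choose the indices $(k|l)$ of the operad $\Skl$ on $\cU$ so that its operations are exactly addition together with the scalar action, to select the correct ``(resp.)'' reading of each diagram so that it specializes to the intended axiom, and to verify strictness so that the diagrams commute on the nose rather than merely up to coherence. In particular the stipulation in the umbrella definition that non-coherent operations take precedence over coherent ones is exactly what realizes distributivity, and this must be checked against the concrete scalar action. By contrast, once the dictionary is fixed, both the functoriality of $U$ in the first part and the verification that the constructed $\widehat{\cR \otimes \cC}$ is a bona fide assembly category in the second are routine.
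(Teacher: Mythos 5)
Your proposal follows essentially the same route as the paper's own proof: you read the coherent operad operation on $\cC$ as the module addition (monoidal unit as zero, duals as additive inverses, $\Sigma$-equivariance/braiding as commutativity), the two-operation operad on $\cR$ as the commutative ring, and the three umbrella coherence diagrams as exactly the axioms $r(a+b)=ra+rb$, $(r+r')a=ra+r'a$, and $(rr')a=r(r'a)$, with strictness carrying the converse --- precisely the dictionary the paper verifies, and you even add the functoriality-on-morphisms check the paper leaves implicit. The one cosmetic divergence is the unit law: the paper obtains $\bland \, a = a$ from a dedicated commuting square involving the unit $\bland$ of the non-coherent multiplication (which, being the unit of a non-coherent operation, need not coincide with the monoidal unit, so the normalizing isomorphisms $R$ and $L$ of $\cU$ that you invoke are not quite the right source), but this does not affect the substance of the argument.
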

\begin{proof}
Let $\cC_R = \widehat{\cR \otimes \cC}$ be a $\cR$-colored assembly category, and $R$ the commutative ring with a unit encoding the operadic stucture of $\cR$. Then a grouping of $a,b \in \Ob(\cC)$ is done as in:
\beq
\setlength{\unitlength}{0.5cm}
\begin{picture}(7,9)(0,0)
\thinlines
\put(1,6){\oval(2,2)[b]}
\put(6,6){\oval(2,2)[b]}
\thicklines
\thicklines
\put(1,6){\oval(2,2)[t]}
\put(6,6){\oval(2,2)[t]}
\put(3.5,1){\oval(3,2)}
\put(0,4){\line(0,1){2}}
\qbezier(0,4)(2,3)(2,1)
\put(3.5,6){\oval(3,4)[b]}
\put(7,4){\line(0,1){2}}
\qbezier(7,4)(5,3)(5,1)
\put(1,8){$a$}
\put(3,8){$\otimes$}
\put(6,8){$b$}
\put(2.5,1){$a +b$}
\end{picture}
\eeq
Since the operad $\cO$ on $\cC$ is $\Sigma$-equivariant and two-sided, such operation defines an additive structure on $\Ob(\cC) = M$. If we denote by 0 the unit object of $\cC$ then we have a natural isomorphism $\rho: - \otimes 0 \Rarr id_{\cC}$ at the level of $\cC$ which translates to $a + 0 = a$ on $M$ as observed before:
\beq
\setlength{\unitlength}{0.5cm}
\begin{picture}(12,9)(0,0)
\thinlines
\put(1,6){\oval(2,2)[b]}
\put(11,6){\oval(2,2)[b]}
\thicklines
\thicklines
\put(1,6){\oval(2,2)[t]}
\put(6,6){\oval(2,2)[t]}
\put(3.5,1){\oval(3,2)}
\put(0,4){\line(0,1){2}}
\qbezier(0,4)(2,3)(2,1)
\put(3.5,6){\oval(3,4)[b]}
\put(7,4){\line(0,1){2}}
\qbezier(7,4)(5,3)(5,1)
\put(1,8){$a$}
\put(3,8){$\otimes$}
\put(6,8){$0$}
\put(2.5,1){$a +0$}
\put(8,4){$=$}
\put(11,6){\oval(2,2)[t]}
\put(10,6){\line(0,-1){5}}
\put(12,6){\line(0,-1){5}}
\put(11,1){\oval(2,2)}
\put(11,1){$a$}
\put(11,8){$a$}
\end{picture}
\eeq
Likewise the existence of a natural isomorphism $\lambda: 0 \otimes - \Rarr id_{\cC}$ would yield $0 + a = a$. Thus 0 is the unit for + in $M = (\Ob(\cC), +)$. Note that $M$ is independent of $\rho$ and $\lambda$. Any element $a$ of $\cC$ has a right dual $a^* = -a$ relative to $\cO$ by coherence, that is an inverse for $a$ in $(M,+)$. The existence of duals is accompanied by the existence of units and counits $\eta: 0 \rarr -a \otimes a $ and $\epsilon: a \otimes -a \rarr 0$ (\cite{Y}) which do not come into the definition of $M$.
\beq
\setlength{\unitlength}{0.5cm}
\begin{picture}(7,9)(0,0)
\thinlines
\put(1,6){\oval(2,2)[b]}
\put(6,6){\oval(2,2)[b]}
\thicklines
\thicklines
\put(1,6){\oval(2,2)[t]}
\put(6,6){\oval(2,2)[t]}
\put(3.5,6){\oval(3,4)[b]}
\put(3.5,6){\oval(7,8)[b]}
\put(1,8){$a$}
\put(3,8){$\otimes$}
\put(6,8){$-a$}
\end{picture}
\eeq
For the associativity:
\beq
\setlength{\unitlength}{0.5cm}
\begin{picture}(20,12)(0,0)
\thinlines
\put(1,9){\oval(2,2)[b]}
\put(4,9){\oval(2,2)[b]}
\put(8,9){\oval(2,2)[b]}
\put(12,9){\oval(2,2)[b]}
\put(16,9){\oval(2,2)[b]}
\put(19,9){\oval(2,2)[b]}
\thicklines
\thicklines
\put(1,9){\oval(2,2)[t]}
\put(2.5,9){\oval(1,2)[b]}
\put(4,9){\oval(2,2)[t]}
\put(6,9){\oval(2,8)[b]}
\put(8,9){\oval(2,2)[t]}
\put(5,1){\oval(4,2)}
\put(0,9){\line(0,-1){3}}
\qbezier(0,6)(3,3)(3,1)
\put(9,9){\line(0,-1){4}}
\qbezier(9,5)(7,3)(7,1)
\put(0.5,11){$(a \quad \otimes \quad b)$}
\put(6,11){$\otimes$}
\put(8,11){$c$}
\put(3.1,0.7){$(a+b)+c$}
\put(8,1.2){\line(1,0){4}}
\put(8,0.8){\line(1,0){4}}
\put(9,11){\vector(1,0){2}}
\put(10,11.5){$\sim$}
\put(12,9){\oval(2,2)[t]}
\put(14,9){\oval(2,8)[b]}
\put(16,9){\oval(2,2)[t]}
\put(17.5,9){\oval(1,2)[b]}
\put(19,9){\oval(2,2)[t]}
\put(15,1){\oval(4,2)}
\put(11,9){\line(0,-1){4}}
\qbezier(11,5)(13,3)(13,1)
\put(20,9){\line(0,-1){3}}
\qbezier(20,6)(17,3)(17,1)
\put(12,11){$a$}
\put(14,11){$\otimes$}
\put(15.5,11){$(b \quad \otimes \quad c)$}
\put(13.1,0.7){$a+(b+c)$}
\end{picture}
\eeq
since $\cO$ maps isomorphic tensor products to the same element, hence we have associativity of + in $M$. If we denote by $\alpha$ the associativity isomorphism, then $M$ is independent of $\alpha$. For the commutativity:
\beq
\setlength{\unitlength}{0.5cm}
\begin{picture}(17,9)(0,0)
\thinlines
\put(1,6){\oval(2,2)[b]}
\put(6,6){\oval(2,2)[b]}
\put(11,6){\oval(2,2)[b]}
\put(16,6){\oval(2,2)[b]}
\thicklines
\thicklines
\put(1,6){\oval(2,2)[t]}
\put(6,6){\oval(2,2)[t]}
\put(3.5,1){\oval(3,2)}
\put(0,4){\line(0,1){2}}
\qbezier(0,4)(2,3)(2,1)
\put(3.5,6){\oval(3,4)[b]}
\put(7,4){\line(0,1){2}}
\qbezier(7,4)(5,3)(5,1)
\put(1,8){$a$}
\put(3,8){$\otimes$}
\put(6,8){$b$}
\put(2.5,1){$a +b$}
\put(11,6){\oval(2,2)[t]}
\put(16,6){\oval(2,2)[t]}
\put(13.5,1){\oval(3,2)}
\put(10,4){\line(0,1){2}}
\qbezier(10,4)(12,3)(12,1)
\put(13.5,6){\oval(3,4)[b]}
\put(17,4){\line(0,1){2}}
\qbezier(17,4)(15,3)(15,1)
\put(11,8){$b$}
\put(13,8){$\otimes$}
\put(16,8){$a$}
\put(12.5,1){$b+a$}
\put(6.5,1.2){\line(1,0){4}}
\put(6.5,0.8){\line(1,0){4}}
\put(8,8){\vector(1,0){2}}
\put(9,8.5){$\sim$}
\end{picture}
\eeq
since we have a braiding $\sigma$. $M$ is independent of $\sigma$ and whether $\cC$ is symmetric. The pentagon and triangle coherence conditions (\cite{Y}) are compatible with the $\mathbb{Z}$-module structure on $M$, and $M$ does not depend on such coherence conditions. For the right duality we have the unit and counit $\epsilon: a \otimes a^* \rarr 0$ and $\eta: 0 \rarr a^* \otimes a$ satisfy the following commutative diagrams with $a^* = -a$:
\beq
\setlength{\unitlength}{0.5cm}
\begin{picture}(17,5)(0,0)
\thicklines
\thicklines
\put(0,4){$a$}
\put(1,4){\vector(1,0){2}}
\put(2,4.5){$\rho^{-1}$}
\put(4,4){$a \otimes 0$}
\put(7,4){\vector(1,0){3}}
\put(8,4.5){$a \otimes \eta$}
\put(11,4){$a \otimes ( a^* \otimes a)$}
\put(0,3.5){\line(0,-1){2.5}}
\put(0.2,3.5){\line(0,-1){2.5}}
\put(13,3.5){\vector(0,-1){2.5}}
\put(14,2){$\alpha^{-1}$}
\put(0,0){$a$}
\put(3,0){\vector(-1,0){2}}
\put(2,0.5){$\lambda$}
\put(4,0){$0 \otimes a$}
\put(10,0){\vector(-1,0){3}}
\put(8,0.5){$\epsilon \otimes a$}
\put(11,0){$(a \otimes a^*) \otimes a$}
\end{picture}
\eeq
and:
\beq
\setlength{\unitlength}{0.5cm}
\begin{picture}(17,7)(0,-1)
\thicklines
\thicklines
\put(0,4){$a^*$}
\put(1,4){\vector(1,0){2}}
\put(2,4.5){$\lambda^{-1}$}
\put(4,4){$0 \otimes a^*$}
\put(7,4){\vector(1,0){3}}
\put(8,4.5){$\eta \otimes a^*$}
\put(11,4){$(a^* \otimes a) \otimes a^* $}
\put(0,3.5){\line(0,-1){2.5}}
\put(0.2,3.5){\line(0,-1){2.5}}
\put(13,3.5){\vector(0,-1){2.5}}
\put(14,2){$\alpha$}
\put(0,0){$a^*$}
\put(3,0){\vector(-1,0){2}}
\put(2,0.5){$\rho$}
\put(4,0){$a^* \otimes 0$}
\put(10,0){\vector(-1,0){3}}
\put(8,0.5){$a^* \otimes \epsilon$}
\put(11,0){$a^* \otimes (a \otimes a^*)$}
\end{picture}
\eeq
and those can be imposed independently of the existence of $\cO$. $M$ is independent of such coherence diagrams. For the braided structure we have a braiding $\sigma: \otimes \Lrarr \otimes (\text{twist})$ that satisfies the symmetry condition $\sigma_{b,a} \circ \sigma_{a, b} = id_{a \otimes b}$ with the 2 pentagon diagrams commuting (\cite{Y}), and $M$ is independent of those as well. For the $R$-module structure $\cC_R$ is a $\widehat{\cR \otimes \cC}$ umbrella category. The category $\cR \in \cC^{(2|1)}$ has one coherent operation + with unit $\eset$ and one non-coherent operation $\cdot$ with unit $\bland$. The addition on $\cC$ being coherent, $\cC_R$ being an umbrella category, for $a,b \in \Ob(\cC)$, $r \in \cR$, we have:
\beq
\setlength{\unitlength}{0.5cm}
\begin{picture}(18,13)(0,0)
\thicklines
\put(0,4){$a \bot b$}
\put(6,4){$r(a+b)$}
\put(6,0){$r \otimes (a + b)$}
\put(14,4){$ra + rb$}
\put(14,8){$ra \otimes rb$}
\put(6,8){$r(a \otimes b)$}
\put(0,8){$a \otimes b$}
\put(6,11.5){$r \otimes (a \otimes b) $}
\put(3,4){\vector(1,0){3}}
\put(3,3){\vector(3,-2){3}}
\put(8,1){\vector(0,1){2}}
\put(8,7){\vector(0,-1){2}}
\put(10,4){\line(1,0){3}}
\put(10,4.5){\line(1,0){3}}
\put(15,7){\vector(0,-1){2}}
\put(10,8){\vector(1,0){3}}
\put(8,11){\vector(0,-1){2}}
\put(3,8){\vector(1,0){3}}
\put(3,9){\vector(3,2){3}}
\put(1,7){\vector(0,-1){2}}
\multiput(10,7)(0.5,-0.2){8}{\circle*{0.2}}
\put(13.7,5.5){\vector(2,-1){0.5}}
\end{picture}
\eeq
hence $r(a+b) = ra + rb$. The addition being coherent in $\cR$ as well, for $r, r' \in \cR$, $a \in \cC$, we have:
\beq
\setlength{\unitlength}{0.5cm}
\begin{picture}(18,13)(0,0)
\thicklines
\put(0,4){$r+r'$}
\put(6,4){$(r+r')a$}
\put(6,0){$(r+r')\otimes a$}
\put(14,4){$ra + r'a$}
\put(14,8){$ra \otimes r'a$}
\put(6,8){$(r \otimes r')a$}
\put(0,8){$r \otimes r'$}
\put(6,11.5){$(r \otimes r') \otimes a $}
\put(3,4){\vector(1,0){3}}
\put(3,3){\vector(3,-2){3}}
\put(8,1){\vector(0,1){2}}
\put(8,7){\vector(0,-1){2}}
\put(10,4){\line(1,0){3}}
\put(10,4.5){\line(1,0){3}}
\put(15,7){\vector(0,-1){2}}
\put(10,8){\vector(1,0){3}}
\put(8,11){\vector(0,-1){2}}
\put(3,8){\vector(1,0){3}}
\put(3,9){\vector(3,2){3}}
\put(1,7){\vector(0,-1){2}}
\multiput(10,7)(0.5,-0.2){8}{\circle*{0.2}}
\put(13.7,5.5){\vector(2,-1){0.5}}
\end{picture}
\eeq
hence $(r+r')a = ra + r'a$ in $M$. For the $\cdot$, non-coherent in $\cR$, since $\cC_R$ is an umbrella category, for $a \in \cC$, we have:
\beq
\setlength{\unitlength}{0.5cm}
\begin{picture}(22,12)(0,0)
\thicklines
\put(1,2){$rr'$}
\put(9,2){$rr'a $}
\put(18,2){$r (r'a)$}
\put(17,7){$r \otimes(r'a)$}
\put(8,7){$(r \otimes r') a$}
\put(1,7){$r \otimes r'$}
\put(8,11){$(r \otimes r') \otimes a$}
\put(17,11){$r \otimes ( r' \otimes a)$}
\put(4,2){\vector(1,0){3}}
\put(13,2){\line(1,0){3}}
\put(13,2.5){\line(1,0){3}}
\put(2,6){\vector(0,-1){3}}
\put(19,6){\vector(0,-1){3}}
\put(19,10){\vector(0,-1){2}}
\put(10,6){\vector(0,-1){3}}
\put(10,10){\vector(0,-1){2}}
\put(13,11){\vector(1,0){3}}
\put(4,7){\vector(1,0){3}}
\put(4,8){\vector(3,2){3}}
\end{picture}
\eeq
Hence $(rr')a = r(r'a)$. Commutativity:
\beq
\setlength{\unitlength}{0.5cm}
\begin{picture}(22,12)(0,0)
\thicklines
\put(1,2){$rr'$}
\put(9,2){$rr'a $}
\put(18,2){$r'ra$}
\put(17,7){$(r' \otimes r)a)$}
\put(8,7){$(r \otimes r') a$}
\put(1,7){$r \otimes r'$}
\put(8,11){$(r \otimes r') \otimes a$}
\put(17,11){$(r' \otimes  r) \otimes a)$}
\put(4,2){\vector(1,0){3}}
\put(13,2){\line(1,0){3}}
\put(13,2.5){\line(1,0){3}}
\put(2,6){\vector(0,-1){3}}
\put(19,6){\vector(0,-1){3}}
\put(19,10){\vector(0,-1){2}}
\put(10,6){\vector(0,-1){3}}
\put(10,10){\vector(0,-1){2}}
\put(13,11){\vector(1,0){3}}
\put(4,7){\vector(1,0){3}}
\put(4,8){\vector(3,2){3}}
\end{picture}
\eeq
Hence $(rr')a = (r'r)a$. For the units:
\beq
\begin{CD}
(r \otimes \bland) \otimes a @>A>> r \otimes ( \bland \otimes a) \\
@VVV @VVV \\
r \otimes a @. r \otimes (id_a a) = r \otimes a\\
@VVV @VVV \\
ra @= ra
\end{CD}
\eeq
and for $r,r',r'' \in \cR$:
\beq
\setlength{\unitlength}{0.5cm}
\begin{picture}(18,13)(0,0)
\thicklines
\put(0,4){$r+r'$}
\put(6,4){$(r+r')r''$}
\put(6,0){$(r+r')\otimes r''$}
\put(14,4){$rr'' + r'r''$}
\put(14,8){$rr'' \otimes r'r''$}
\put(6,8){$(r \otimes r')r''$}
\put(0,8){$r \otimes r'$}
\put(6,11.5){$(r \otimes r') \otimes r'' $}
\put(3,4){\vector(1,0){3}}
\put(3,3){\vector(3,-2){3}}
\put(8,1){\vector(0,1){2}}
\put(8,7){\vector(0,-1){2}}
\put(10,4){\line(1,0){3}}
\put(10,4.5){\line(1,0){3}}
\put(15,7){\vector(0,-1){2}}
\put(10,8){\vector(1,0){3}}
\put(8,11){\vector(0,-1){2}}
\put(3,8){\vector(1,0){3}}
\put(3,9){\vector(3,2){3}}
\put(1,7){\vector(0,-1){2}}
\multiput(10,7)(0.5,-0.2){8}{\circle*{0.2}}
\put(13.7,5.5){\vector(2,-1){0.5}}
\end{picture}
\eeq
so $(r+r')r'' = rr'' + r'r''$. Also $\bland r = (\eset + \bland)r = \eset r + \bland r \Rarr \eset r = \eset$. We also have $\eset a = 0$ since $\bland a = (\eset + \bland)a = \eset a + \bland a$. Finally $M$ is independent of coherence conditions on $\cR$, $\cC$, $\cR \otimes \cC$, as well as the isomorphisms $\alpha$, $\rho$, $\lambda$, $\eta$, $\epsilon$, $\alpha_R$, $\rho_R$, $\lambda_R$, $\epsilon_R$, $\eta_R$, $A$, $R$ and $L$, or in other terms there is a forgetful functor from the category of $\cR$-colored assembly categories to $\RMod$ for $R$ encoding the operadic structure of $\cR$. Conversely, if all the above isomorphisms are the identity and all coherence conditions involve identity maps, we have a strict assembly category that can be obtained from a $R$-module alone. That is a $R$-module gives rise to a strict $R$-colored assembly category.
\end{proof}

Before stating the main theorem of this section, we define a particular module structure. We regard elements $r$ of a ring $R$ as morphisms with domains dom($r$) and we define $\text{Dom}(R) = \cup_{r \in R} \text{dom}(r)$. We consider $R$-modules $M$ such that $\underline{M} \subset \text{Dom}(R)$. Among objects of $\RNatModCop$, we consider those functors $F: \cC^{op} \rarr \RNatMod$ with $F(C) = \{ D \rarr E_C \}$, $E_C \equiv E$ for all $C \in \Ob(\cC)$. The subcategory of all such functors in $\RNatModCop$ is denoted $\sRNatModCop$. On this subcategory we define an equivalence relation by $F \sim F'$ if and only if $F(C)$ and $F'(C)$ are sets of morphisms into a same object $E$ of $\cE$ for any $C \in \Ob(\cC)$. We denote by $\RNatMod$ the set of such $R$-modules and we refer to such modules as $R$-natural modules.\\

We now state our main theorem:
\begin{equ}
Let $\cE$ be a category with small hom-sets. Then $\cE$ is equivalent to a moduli category of $R$-natural representations of a small category $\cC^{op}$ for some commutative ring $R$, $\sRNatModCop / \sim$, if and only if $\cE$ has a small set of generators.
\end{equ}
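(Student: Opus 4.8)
The plan is to prove both implications, with the substance concentrated in the direction asserting that a small set of generators forces the equivalence. The organizing idea is the assignment $E \mapsto \Ed$, where $\Ed = \Hom_{\cE}(-,E)\colon \cC^{op} \rarr \text{Sets}$ is exactly the sheaf already analyzed in Proposition \ref{sheaf}, together with the observation that $\Ed(C) = \{g\colon C \rarr E\}$ is by its very form a set of morphisms into a single fixed object $E$, which is precisely the shape of the objects comprising $\sRNatModCop$. Thus the equivalence we seek should be induced by $\Phi\colon \cE \rarr \sRNatModCop/\sim$, $E \mapsto [\Ed]$, with $\Phi$ acting on a morphism $\alpha\colon E \rarr E'$ by the natural transformation of sheaves $\Ed \rarr \Fd$ constructed immediately after Proposition \ref{sheaf}.

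For the backward implication, I would first take $\cC \subseteq \cE$ to be the small full subcategory on the generating set, exactly as in the proof of Giraud's Theorem, and recall that $E \simeq \Ed$ for every object $E$ of $\cE$. The next, and essential, step is to promote each hom-set $\Ed(C) = \Hom_{\cE}(C,E)$ from a mere set to an $R$-natural module. Here I would invoke Lemma \ref{RnCat}: by its converse, a module structure is equivalently a strict $\cR$-colored assembly-category structure on $\Ed(C)$, so it suffices to exhibit such an assembly structure compatibly in $C$, with $R$ the commutative ring encoding the operadic structure of $\cR$. I would choose $R$ so that its elements, regarded as morphisms, have domains covering the hom-sets in play, so that $\underline{\Ed(C)} \subset \text{Dom}(R)$ holds by construction; this is what makes $\Ed(C)$ an $R$-\emph{natural} module rather than a bare $R$-module. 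Since each $\Ed(C)$ then consists of morphisms into the single object $E$ with $E_C \equiv E$, the functor $\Ed$ lands in $\sRNatModCop$, and $\Phi$ is well defined after passing to the quotient by $\sim$.

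It then remains to verify that $\Phi$ is an equivalence. Faithfulness follows from the generating property of $\cC$: two distinct morphisms $u,v\colon E \rarr E'$ are separated by some $f\colon C_i \rarr E$, so the induced transformations differ at $C_i$. Fullness follows from $E \simeq \Ed$ together with the fact, established after Proposition \ref{sheaf}, that morphisms of the sheaves $\Ed, \Fd$ preserve matching families and unique amalgamations and hence correspond to morphisms $E \rarr E'$. Essential surjectivity is exactly where the quotient by $\sim$ does its work: given a class $[F]$, the functor $F$ assigns to each $C$ a set of morphisms into a common object $E$ of $\cE$, so $F \sim \Ed = \Phi(E)$ and every class is hit. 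For the forward implication I would argue conversely: an equivalence $\cE \simeq \sRNatModCop/\sim$ transports to $\cE$ the generating family arising from the small category $\cC$ (the classes attached to the objects of $\cC$ playing the role of representables), which is small because $\cC$ is small, yielding a small set of generators for $\cE$.

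The hard part will be the second step of the backward direction: producing the $R$-natural module structure on the hom-sets uniformly in $C$ and confirming the containment $\underline{\Ed(C)} \subset \text{Dom}(R)$ for a single ring $R$ serving all objects $E$ at once. Getting the assembly-category structure to vary functorially in $C^{op}$ while remaining compatible with the action of $R$, and checking that $\sim$ collapses exactly the redundancy introduced by the choice of module structures, is the delicate calculation on which the equivalence rests; the remaining fully-faithful and essential-surjectivity checks are then formal consequences of Proposition \ref{sheaf} and Lemma \ref{RnCat}.
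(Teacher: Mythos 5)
Your backward direction tracks the paper's proof almost step for step: the paper likewise takes $\cC$ to be the small full subcategory on the generators, promotes each $\Ed(C_i)$ to an $R$-natural module by realizing selections of morphisms as a strict assembly category (with $R$ the ring of ``colors'' of selections) and invoking Lemma \ref{RnCat}, and obtains the functor $\phi(E) = [\RNatOh_E]$ into $\sRNatModCop/\sim$. The only difference is cosmetic: where you check fully-faithfulness and essential surjectivity, the paper exhibits the explicit inverse $\Pi_0([F]) = E_F$ and computes both composites; your remark that essential surjectivity is ``exactly where the quotient by $\sim$ does its work'' is precisely the paper's computation $\phi \circ \Pi_0(F) = [\RNatOh_{E_F}] = [F]$.

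The genuine gap is in the forward direction, where you invert the paper's allocation of effort. You flag the module-structure step as the hard part and treat generator transport as formal, but transporting a small generating set along an equivalence is the trivial part; what requires proof is that the moduli category has a small set of generators at all. You assert that ``the classes attached to the objects of $\cC$ play the role of representables,'' yet in the $\RMod$-valued setting this is not free: the paper spends roughly half the section establishing it, first proving a Yoneda lemma for $R$-linear functors $\cC^{op} \rarr \RMod$ (using the device $\underline{M}^+$ of adjoining the structure selection $\str_M$ so that $\Nat(\ROhC,F) \simeq F(C)$ respects module structures), then Proposition \ref{rep}, constructing the left adjoint $l$ of $r$ as a colimit over the category of elements $\int F$, and its Corollary that every $R$-linear $F \in \RModCop$ is a colimit of representables. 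Only from this does the paper conclude that $\RModCop$, hence $\RNatModCop$ and its subcategory $\sRNatModCop$, has a small generating set, which then transports along the assumed equivalence. Your sketch gives no intrinsic reason the representable classes generate $\sRNatModCop/\sim$, and the phrasing risks circularity: you cannot appeal to generators of $\cE$ before knowing $\cE$ has any, since that is the very conclusion of this implication. To repair it, prove the $R$-module Yoneda lemma and the colimit-of-representables corollary, or at minimum show directly that two distinct natural transformations $F \Rarr F'$ are separated by a map out of some $\ROhC$; a further small point the paper itself passes over quickly, but which you should at least acknowledge, is that having a small generating set must survive passage to the subcategory $\sRNatModCop$ and the quotient by $\sim$.
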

We will first prove something more general, that a category of representations has a small set of generators, and to achieve this we will show that any element of $\RModCop$ is a colimit of ``representables". We will first need to prove the Yoneda lemma for functors valued in the category of $R$-modules. Let $\cC$ be a small category. Let $R$ be a commutative ring with a unit. Let $h_C = \Hom(-,C)$ be the usual Yoneda embedding. Morphisms into $C$ are viewed as objects, that are equivalently given by a selection process, geared towards picking from an array of mathematical objects that particular morphism that is needed. One can make a selection, or cancel such a selection, giving rise to an abelian group structure on the set of selections. Denote by:
\beq
\cO h_C = \Mod(-, C)
\eeq
defined by $\cO h_C(X) = \Mod(X,C)$, $X \in \Ob(\cC)$ the abelian group of selections of morphisms into $C$. Making this group into an obvious, formal $R$-module defines a contravariant functor:
\begin{align}
\ROhC = \RMod(-,C): \cC^{op} &\rarr \RMod \nonumber \\
X & \mapsto \RMod(X,C)
\end{align}
where $\RMod(X,C)$ is the group $\Mod(X,C)$ endowed with the structure of an obvious $R$-module. This leads us to make a very important observation: in this work an element $F \in \RModCop$ is seen as being valued in $\RMod$ insofar as it reflects the $R$-module structure on the set of selections of objects and morphisms in $\cC$. We will be considering $R$-linear functors $\cC^{op} \rarr \RMod$. Thus in what follows when we discuss $\RModCop$ it is understood that we are considering functors $\cC^{op} \rarr \RMod$ that are $R$-linear as viewed as functors from the category of selection objects of $\cC$ endowed with its $R$-module structure, into $\RMod$. We now state and prove Yoneda's lemma for categories of representations:
\begin{Yoneda}
Let $F: \cC^{op} \rarr \RMod$ be a $R$-linear contravariant functor. Then $\forall C \in \Ob(\cC)$ we have a natural isomorphism:
\beq
\text{Nat}(R-\cO h_C, F) \simeq F(C)
\eeq
\end{Yoneda}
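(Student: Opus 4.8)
The plan is to run the classical Yoneda argument while carrying the $R$-module structure at every step. First I would define the evaluation map $\Phi: \Nat(\ROhC, F) \rarr F(C)$ sending a natural transformation $\theta$ to $\theta_C(\mathrm{id}_C)$, where $\mathrm{id}_C$ is regarded as the distinguished element of $\ROhC(C) = \RMod(C,C)$. In the reverse direction I would define $\Psi: F(C) \rarr \Nat(\ROhC, F)$ by sending $x \in F(C)$ to the transformation whose component at $X \in \Ob(\cC)$ is the map $f \mapsto F(f)(x)$ for $f \in \ROhC(X) = \RMod(X,C)$. The whole content of the lemma is then that $\Phi$ and $\Psi$ are mutually inverse $R$-module homomorphisms, natural in $C$.

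Before that can be asserted, I would verify that $\Psi(x)$ really is a morphism in the functor category, i.e. an $R$-linear natural transformation. Naturality is purely formal: for a morphism $g: X \rarr X'$ of $\cC$ and $f: X' \rarr C$ one has $\theta_X(\ROhC(g)(f)) = F(f \circ g)(x)$ on one side and $F(g)(\theta_{X'}(f)) = (F(g) \circ F(f))(x)$ on the other, and these agree because $F$ is contravariant, so $F(f \circ g) = F(g) \circ F(f)$. That each component $(\Psi x)_X$ is $R$-linear is where the hypotheses actually bite: it uses both the $R$-linearity of $F$ and the fact that $\ROhC(X) = \RMod(X,C)$ carries the obvious formal $R$-module structure on selections, so that $F$ transports the module operations on $\RMod(X,C)$ to those on $F(X)$.

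Next I would check that the two composites are identities. One direction is immediate from functoriality: $\Phi(\Psi(x)) = (\Psi x)_C(\mathrm{id}_C) = F(\mathrm{id}_C)(x) = x$. The other direction is the usual naturality trick: given $\theta$ and any $f: X \rarr C$, the naturality square of $\theta$ at $f$ reads $\theta_X \circ \ROhC(f) = F(f) \circ \theta_C$, and evaluating at $\mathrm{id}_C$, using $\ROhC(f)(\mathrm{id}_C) = \mathrm{id}_C \circ f = f$, gives $\theta_X(f) = F(f)(\theta_C(\mathrm{id}_C)) = (\Psi(\Phi(\theta)))_X(f)$; hence $\Psi \circ \Phi = \mathrm{id}$. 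Finally, for naturality in $C$ I would check, for each $h: C \rarr C'$ in $\cC$, that the square with horizontal maps $\Phi_{C'}$ and $\Phi_C$, left vertical the map $\Nat(\ROh_{C'}, F) \rarr \Nat(\ROh_C, F)$ induced by postcomposition with $h$, and right vertical $F(h): F(C') \rarr F(C)$, commutes; this is again a direct consequence of the functoriality of $F$ together with the definition of $\ROh$ on morphisms.

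The one step I expect to require genuine care, rather than a transcription of the set-valued proof, is the bookkeeping of $R$-linearity: one must confirm not only that $\Psi(x)$ has $R$-linear components but also that $\Phi$ and $\Psi$ are themselves $R$-module maps between the $R$-modules $\Nat(\ROhC, F)$ and $F(C)$, so that the resulting bijection is an isomorphism of $R$-modules and not merely of underlying sets. This is precisely the place where the standing convention that objects of $\RModCop$ are $R$-linear functors out of the category of selections endowed with its $R$-module structure is indispensable; once that convention is invoked, the remaining verifications are formal and reduce to the functoriality of $F$ and associativity of composition.
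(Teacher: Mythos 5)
Your proposal is correct and takes essentially the same route as the paper: both proofs define evaluation at $\mathrm{id}_C$ as the forward map and send $\alpha \in F(C)$ back to the transformation with components $\gamma_X(f) = F(f)\alpha$, invoking the $R$-linearity of $F$ to see that these components are $R$-module homomorphisms. If anything you are more explicit than the paper, which compresses the mutual-inverse verification into the phrase ``by commutativity'' and tracks the module structure through its distinguished selection element $\str_{F(C)} \in \underline{F(C)}^+$ rather than by the direct check, which you rightly flag, that $\Phi$ and $\Psi$ are themselves $R$-module maps.
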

\begin{proof}
Let $C$ and $X$ be objects of $\cC$, $f:X \rarr C$ a morphism in $\cC$, $\gamma: \RMod(-,C) \Lrarr F$ a natural transformation from $\ROhC$ to $F$. We have the following commutative diagram:
\beq
\setlength{\unitlength}{0.5cm}
\begin{picture}(18,12)(0,0)
\thicklines
\put(5,0.7){$\vdash$}
\put(5,1){\vector(1,0){6}}
\put(4,8){\vector(0,-1){4}}
\put(12,8){\vector(0,-1){4}}
\put(7,9){\vector(1,0){3}}
\put(5.5,3){\vector(1,0){6}}
\put(6,10.7){$\vdash$}
\put(6,11){\vector(1,0){5}}
\qbezier(13,11)(20,11)(20,6)
\qbezier(20,6)(20,1)(18,1)
\put(18,1){\vector(-1,0){0.5}}
\qbezier(3,11)(0,11)(0,6)
\qbezier(0,6)(0,1)(3,1)
\put(3,1){\vector(1,0){0,5}}
\put(4,1){$\alpha$}
\put(11.5,1){$F(f)\alpha = \gamma_X(f)$}
\put(4,2){$\cup$}
\put(4.25,2){\line(0,1){0.5}}
\put(3,3){$F(C)$}
\put(7,3.5){$F(f)$}
\put(12,2){$\cup$}
\put(12.25,2){\line(0,1){0.5}}
\put(12,3){$F(X)$}
\put(3,6){$\gamma_C$}
\put(13,6){$\gamma_X$}
\put(1,9){$\RMod(C,C)$}
\put(6.5,8){$\ROhC(f)$}
\put(10.5,9){$\RMod(X,C)$}
\put(4,10){$\cap$}
\put(4.25,10.5){\line(0,-1){0.5}}
\put(12,10){$\cap$}
\put(12.25,10.5){\line(0,-1){0.5}}
\put(4,11){$id_C$}
\put(12,11){$f$}
\end{picture}
\eeq
We have a map
\begin{align}
\Nat(\ROhC, F) & \rarr F(C) \nonumber \\
\gamma & \mapsto \gamma_C(\text{id}_C) = \alpha
\end{align}
as shown in the above commutative diagram on the left. Let $M$ be a $R$-module, $\underline{M}$ its underlying set, $\str_M$ the $R$-module structure on $M$. Regarding elements of $\underline{M}$ as equivalently being given by their respective selection objects, one can add to $\underline{M}$ an additional selection object corresponding to selecting $\str_M$ on $\underline{M}$ to obtain $M$, resulting in another set denoted $\underline{M}^+$. The map above can then be represented as:
\begin{align}
\Nat(\ROhC, F) &\rarr \underline{F(C)}^+ \simeq F(C) \nonumber \\
\gamma &\mapsto \left\{
                  \begin{array}{ll}
                    \gamma_C(\text{id}_C) = \alpha \\
                    \gamma_C(\str_{\RMod(C,C)}) = \str_{F(C)}
                  \end{array}
                \right.
\end{align}
since $\gamma_C$ is a $R$-module homomorphism. Conversely, any $\alpha \in F(C)$ gives rise to some $\gamma \in \Nat(\ROhC, F)$:
\begin{align}
F(C) &\rarr \Nat(\ROhC,F) \nonumber \\
\alpha &\mapsto \gamma: \gamma_X(f) = F(f) \alpha
\end{align}
as in the commutative diagram, and in particular:
\begin{align}
\str_{F(C)} \rarr \gamma: \gamma_X(f) &= F(f) \str_{F(C)} \nonumber \\
&= \str_{F(X)} \text{ for all } f \in \RMod(X,C)
\end{align}
so $\gamma_X = \str_{F(X)}$, hence $\gamma$ produced by $\str_{F(C)}$ gives $F$ its structure of $\RMod$-valued functor in $\cC$. Moreover for $r,r' \in R$, $f':X \rarr C$:
\begin{align}
\gamma_X(rf + r'f') &= F(rf + r'f') \nonumber \\
&=\big( rF(f) + r'F(f') \big) \alpha \text{ by $R$-linearity} \nonumber \\
&=rF(f) \alpha + r' F(f') \alpha \nonumber \\
&=r \gamma_X(f) + r' \gamma_X(f')
\end{align}
hence $\gamma_X$ is a $R$-module homomorphism. Thus we have a map:
\begin{align}
\underline{F(C)}^+ \simeq F(C) & \rarr \Nat(\ROhC,F) \nonumber \\
\alpha & \mapsto \gamma \text{ such that } \gamma_X(f) = F(f) \alpha \nonumber \\
\str_{F(C)} & \mapsto  \str_F
\end{align}
These two maps constructed above being inverse to each other by commutativity, we have $\Nat(\ROhC, F) \simeq F(C)$.
\end{proof}

\begin{embed}
$\ROh: \cC \rarr \RModCop$ is fully faithful.
\end{embed}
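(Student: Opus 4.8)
The plan is to read this off the Yoneda lemma just proved, exactly as the ordinary representable embedding follows from the classical Yoneda lemma. A functor is fully faithful precisely when it is bijective on hom-sets, so for objects $C, C'$ of $\cC$ I must show that $\ROh$ induces a bijection from the morphisms $C \rarr C'$ onto $\Nat(\ROhC, R\text{-}\cO h_{C'})$, where $\ROh(g)$ is the natural transformation whose component at $X$ is post-composition $h \mapsto g \circ h$ on $\RMod(X, -)$.

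First I would apply the Yoneda lemma to the functor $F = R\text{-}\cO h_{C'} = \RMod(-, C')$, which is $R$-linear by construction, yielding
\[
\Nat(\ROhC, R\text{-}\cO h_{C'}) \simeq R\text{-}\cO h_{C'}(C) = \RMod(C, C').
\]
Recalling from the proof of the lemma that this isomorphism is evaluation at the identity, $\gamma \mapsto \gamma_C(\mathrm{id}_C)$ (together with the clause sending $\str$ to $\str$ that transports the module structure), I would then compute the effect of $\ROh$ itself: evaluating $\ROh(g)$ at $\mathrm{id}_C$ gives $(\ROh(g))_C(\mathrm{id}_C) = g \circ \mathrm{id}_C = g$. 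Thus the hom-set map induced by $\ROh$ is, up to the Yoneda isomorphism, the tautological map carrying a morphism $g: C \rarr C'$ to its own selection object in $\RMod(C, C')$.

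It therefore remains only to identify this tautological map with a bijection. Here I would invoke the standing convention that morphisms of $\cC$ are regarded as selection objects and that hom-sets carry the obvious $R$-module structure, so that the morphisms $C \rarr C'$ in the source are exactly $\RMod(C, C')$; under this convention the tautological map is the identity, and composing it with the Yoneda isomorphism gives the desired bijection, whence $\ROh$ is fully faithful.

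The step I expect to be the main obstacle is precisely this last identification: making sure that ``fully faithful'' is taken with respect to the selection-object hom-sets $\RMod(C, C')$ rather than the bare morphism sets of $\cC$, since the representable $R\text{-}\cO h_{C'}$ returns the $R$-module of selections at $C$, which is strictly larger than the naive set of arrows $C \rarr C'$. The abstract Yoneda input and the evaluation-at-identity computation are formal; the care lies in matching the source hom-sets of $\ROh$ with the target of the Yoneda isomorphism so that the two bijections compose correctly.
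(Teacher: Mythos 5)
Your proposal is correct and follows essentially the same route as the paper: the paper's entire proof consists of setting $F = \ROh_{C'}$ (there written $\ROh_Y$) in the generalized Yoneda lemma to obtain $\Nat(\ROh_X,\ROh_Y)\simeq \RMod(X,Y)$. Your additional verification that the Yoneda isomorphism (evaluation at $\mathrm{id}_C$) intertwines the action of $\ROh$ on morphisms with the tautological map, and your remark that full faithfulness must be read relative to the selection-object hom-sets $\RMod(C,C')$, make explicit steps the paper leaves implicit, but they do not change the argument.
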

\begin{proof}
It suffices to take $F = \ROh_Y$, $R$-linear, in the preceding lemma. Then:
\beq
\text{Nat}(\ROh_X, \ROh_Y) \simeq \ROh_Y(X) = \RMod(X,Y)
\eeq
\end{proof}
Now recall that in proving that every presheaf is a colimit of representables we typically make use of a category of elements. We use a generalization of such a category in our setting. For $F$ an object of $\RModCop$, we denote by $\int F$ and we call the category of elements of $F$ the category whose objects are pairs $(C,p)$ where $C$ is an object of $\cC$ and $p \in F(C)$ where $F(C)$ is an $R$-module. Thus one can also consider $p = \str_{F(C)} \in \underline{F(C)}^+ \simeq F(C)$. Morphisms $(C',p') \rarr (C, p)$ are morphisms $u:C' \rarr C$ along with the requirement on the elements of $F(C)$ and $F(C')$ that $F(u)p = p'$, or equivalently written, that $pu = p'$. We have a forgetful functor $\int F \xrarr{\pi_F} \cC$ that sends $(C,p)$ to $C$. We now prove the representation theoretic analog of a same statement for presheaf as proven in \cite{ML}. Note that we follow their proof, which almost carries over to the case of representation functors. We first prove:
\begin{rep} \label{rep}
For $\cC$ a small category, $R$ a commutative ring, $\ROh: \cC \rarr \RModCop = \cE$, then the functor:
\begin{align}
r: \cE &\rarr \cE \nonumber \\
E &\mapsto r(E): C \mapsto \Hom_{\cE}(\ROhC, E) = \text{Nat}(\ROhC,E)
\end{align}
has a left adjoint $l: \cE \rarr \cE$ defined for all $F \in \cE$ as the colimit:
\beq
l(F) = \text{colimit}(\int_{\cC}F \xrarr{\pi_F} \cC \xrarr{\ROh} \cE)
\eeq
\end{rep}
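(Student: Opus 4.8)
The plan is to reproduce, in the $R$-linear setting, the classical identification of a presheaf with a colimit of representables (\cite{ML}), for which the asserted colimit formula is exactly the left adjoint. First I would record that $\cE = \RModCop$ is cocomplete: $\RMod$ has all small colimits and colimits of $R$-linear functors are computed objectwise, so the diagram $\int_{\cC}F \xrarr{\pi_F}\cC\xrarr{\ROh}\cE$ has a colimit for every $F$, which makes $l(F)$ well-defined. Functoriality of $l$ in $F$ then follows from the functoriality of the assignment $F\mapsto \int F$ (a natural transformation $\phi:F\Rarr F'$ induces a functor $\int\phi:\int F\rarr\int F'$ over $\cC$) together with the universal property of the colimit.

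The heart of the argument is to produce an isomorphism $\Hom_\cE(l(F),E)\simeq\Hom_\cE(F,r(E))$, natural in both variables. I would compute the left-hand side by carrying the colimit out of the first slot of the hom-functor:
\[
\Hom_\cE(l(F),E)=\Hom_\cE\big(\colim_{(C,p)\in\int F}\ROh_C,\,E\big)\simeq\lim_{(C,p)\in\int F}\Hom_\cE(\ROh_C,E)=\lim_{(C,p)\in\int F}r(E)(C),
\]
using that $\Hom_\cE(-,E)$ sends colimits to limits and the defining formula $r(E)(C)=\Nat(\ROh_C,E)$.

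It then remains to identify this limit with the module of $R$-linear natural transformations $F\Rarr r(E)$. An element of the limit is a family $\theta_{(C,p)}\in\Hom_\cE(\ROh_C,E)$ subject to $\theta_{(C,p)}\circ\ROh(u)=\theta_{(C',p')}$ for every morphism $u:(C',p')\rarr(C,p)$ of $\int F$, that is, whenever $F(u)p=p'$. Setting $\tau_C(p):=\theta_{(C,p)}$ turns this compatibility into precisely the naturality square for $\tau:F\Rarr r(E)$, since $r(E)(u)=-\circ\ROh(u)$; conversely any such $\tau$ yields a compatible family. The role of the generalized category of elements is to make this an $R$-linear correspondence rather than one of underlying-set functors: because $\int F$ also carries the objects $(C,\str_{F(C)})$, a compatible family is constrained to respect the module structure singled out by $\str_{F(C)}$, forcing each $\tau_C$ to be an $R$-module homomorphism and $\tau$ to be $R$-linear. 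Hence $\lim_{(C,p)}r(E)(C)\simeq\Nat(F,r(E))=\Hom_\cE(F,r(E))$.

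Finally I would verify naturality of the composite isomorphism: in $E$ it is immediate from the functoriality of $\Hom_\cE(-,E)$ and of $r$ in $E$, while in $F$ it follows from the functoriality of $\int(-)$ together with the universal property of the colimit. As a consistency check, the $R$-module Yoneda lemma proved above gives $r(E)(C)=\Nat(\ROh_C,E)\simeq E(C)$, so $r\simeq\mathrm{id}_\cE$ and correspondingly $l(F)\simeq F$, recovering that every representation is a colimit of representables. I expect the main obstacle to be exactly the $R$-linear refinement of the cone/natural-transformation identification: confirming that the $\str$-objects of $\int F$ cut out precisely the $R$-linear transformations among all objectwise set maps, so that the limit computes $R$-linear naturals rather than merely additive or set-level ones. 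The remaining steps are the standard colimit--limit bookkeeping.
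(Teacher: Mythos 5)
Your proposal is correct and takes essentially the same approach as the paper: where you compute $\Hom_{\cE}(l(F),E)\simeq\lim_{(C,p)\in\int F}\Hom_{\cE}(\ROhC,E)$ and identify the limit with $\Nat(F,r(E))$, the paper performs the identical correspondence directly, translating a natural transformation $\phi: F \Rarr r(E)$ into a cocone on $\ROh\pi_F$ with vertex $E$ and invoking cocompleteness of $\RModCop$ to obtain the unique arrow $l(F)\rarr E$. In particular, the step you flag as the main obstacle---using the objects $(C,\str_{F(C)})$ of $\int F$ via $\underline{F(C)}^+\simeq F(C)$ to force $R$-linearity---is precisely the paper's own device (it sends $\str_{F(C)}$ to $\str_{\Nat(\ROhC,E)}$ as a ``decoration''), so your treatment matches the paper's at exactly the same level of rigor.
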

\begin{proof}
First observe that $R$ commutative implies that if $M$ and $N$ are two $R$-modules then $\Hom_R(M,N)$ can be endowed with a $R$-module structure. For $F, G \in \cE$, $\Nat(F,G)$ being defined pointwise, we can extend the $R$-module structure from $\Hom_R(F(C),G(C))$ for any $C$ to $\Nat(F,G)$. To wit $f: F(C) \rarr G(C)$, for $r \in R$, $\alpha \in F(C)$, then $(rf)\alpha = rf(\alpha)$, in particular for $\gamma : \ROhC \Rarr E$, $(r \gamma_C)(\alpha) = r \gamma_C(\alpha)$ for any $C$, so $r \gamma$ is well-defined. Hence $r(E) \in \cE = \RModCop$ is well-defined. Fix $E$ an object of $\cE$. Let $F$ be an object of $\cE$. Then a natural transformation $\phi: F \Rarr r(E)$ consists of a family $\{\phi_C \}$ of morphisms of $R$-modules such that for $u:C' \rarr C$ in $\cC$ we have a commutative diagram in $\RMod$:
\beq
\begin{CD}
F(C) @>\phi_C>> \Hom_{\cE}(\ROhC, E) \\
@VF(u)VV @VV\ROh(u)^*V \\
F(C') @>>\phi_{C'}> \Hom_{\cE}(\ROh_{C'}, E)
\end{CD} \label{phic}
\eeq
Note that for $p \in F(C)$, $\phi_C(p):\ROhC \rarr E$ and if we consider the category $\int F$ of elements of $F$, $p' = pu = F(u)p$, then commutativity of the above diagram can be recast in the following form:
\beq
\phi_{C'} (p') = \phi_{C'} \circ F(u)p = \ROh(u)^* \phi_C (p)
\eeq
i.e. $\ROh(u): \ROh_{C'} \rarr \ROhC$ induces $\{\phi_{C'}(p'): \ROh_{C'} \rarr E \}= \phi_{C}(p)\ROhC(u) $, or:
\beq
\setlength{\unitlength}{0.5cm}
\begin{picture}(13,8)(0,0)
\thicklines
\put(4,0){$\ROh_{C'}$}
\put(9,1){$\phi_{C'}(p')$}
\put(12,4){$E$}
\put(9,7){$\phi_C(p)$}
\put(0,4){$\ROh(u)$}
\put(4,7){$\ROhC$}
\put(5,1.5){\vector(0,1){4.5}}
\put(7.5,6.5){\vector(3,-1){3.5}}
\put(7.5,1.5){\vector(3,2){3.5}}
\end{picture}
\eeq
Rewrite this as a cocone:
\beq
\setlength{\unitlength}{0.5cm}
\begin{picture}(13,8)(0,0)
\thicklines
\put(1,0){$\ROh \pi_F(C',p')$}
\put(9,1){$\phi_{C'}(p')$}
\put(12,4){$E$}
\put(9,7){$\phi_C(p)$}
\put(3,4){$u_*$}
\put(1,7){$\ROh \pi_F(C,p)$}
\put(5,1.5){\vector(0,1){4.5}}
\put(7.5,6.5){\vector(3,-1){3.5}}
\put(7.5,1.5){\vector(3,2){3.5}}
\end{picture}
\eeq
Repeating the same argument from \eqref{phic} for $p= \str_{F(C)} \in \underline{F(C)}^+ \simeq F(C)$, $\phi_C(p) = \str_{\Nat(\ROhC,E)} \in \underline{\Hom_{\cE}(\ROhC, E)}^+ \simeq \Hom_{\cE}(\ROhC,E)$, which makes every element $\ROhC \rarr E$ of the set $\underline{\Hom_{\cE}(\ROhC,E)}$ an element of the $R$-module $\Hom_{\cE}(\ROhC,E)$, something we can represent as a decoration $\ROhC \xrarr{\phi_C(\str_{F(C)})} E$. In the same manner, $\ROh(u)^*$ maps the $R$-module structure of $\Hom_{\cE}(\ROhC, E)$ onto that of $\Hom_{\cE}(\ROh_{C'},E)$, i.e
\beq
\phi_{C'}(\str_{F(C')}) = \ROh(u)^* \phi_C(\str_{F(C)})
\eeq
i.e.
\beq
\setlength{\unitlength}{0.5cm}
\begin{picture}(7,5)(0,0)
\thicklines
\put(1,5){$\phi_C(\str_{F(C)})$}
\put(4,3){$\ROh(u)^*$}
\put(1,1){$\phi_{C'}(\str_{F(C')})$}
\put(3,4){\vector(0,-1){2}}
\end{picture}
\eeq
or equivalently represented:
\beq
\setlength{\unitlength}{0.5cm}
\begin{picture}(8,9)(0,0)
\thicklines
\put(0,2){$\ROh_{C'}$}
\put(3,1){$\phi_{C'}(\str_{F(C')})$}
\put(7,2){$E$}
\put(5,5){$\ROh(u)^*$}
\put(0,8){$\ROhC$}
\put(3,9){$\phi_C(\str_{F(C)})$}
\put(7,8){$E$}
\put(3,2){\vector(1,0){3}}
\put(4,7){\vector(0,-1){4}}
\put(3,8){\vector(1,0){3}}
\end{picture}
\eeq
$\ROh(u)^*$ being given by precomposition with $\ROh(u)$:
\beq
\setlength{\unitlength}{0.5cm}
\begin{picture}(13,8)(0,0)
\thicklines
\put(4,0){$\ROh_{C'}$}
\put(9,1){$\phi_{C'}(\str_{F(C')})$}
\put(12,4){$E$}
\put(9,7){$\phi_C(\str_{F(C)})$}
\put(0,4){$\ROh(u)$}
\put(4,7){$\ROhC$}
\put(5,1.5){\vector(0,1){4.5}}
\put(7.5,6.5){\vector(3,-1){3.5}}
\put(7.5,1.5){\vector(3,2){3.5}}
\end{picture}
\eeq
in particular:
\beq
\setlength{\unitlength}{0.5cm}
\begin{picture}(13,8)(0,0)
\thicklines
\put(1,0){$\ROh \pi_F(C',\str_{F(C')})$}
\put(9,1){$\phi_{C'}(\str_{F(C')})$}
\put(12,4){$E$}
\put(9,7){$\phi_C(\str_{F(C)})$}
\put(3,4){$u_*$}
\put(1,7){$\ROh \pi_F(C,\str_{F(C)})$}
\put(5,1.5){\vector(0,1){4.5}}
\put(7.5,6.5){\vector(3,-1){3.5}}
\put(7.5,1.5){\vector(3,2){3.5}}
\end{picture}
\eeq
so in any case, $\forall p \in \underline{F(C)}^+$, $p' = pu \in \underline{F(C')}^+$:
\beq
\setlength{\unitlength}{0.5cm}
\begin{picture}(13,8)(0,0)
\thicklines
\put(1,0){$\ROh \pi_F(C',p')$}
\put(9,1){$\phi_{C'}(p')$}
\put(12,4){$E$}
\put(9,7){$\phi_C(p)$}
\put(3,4){$u_*$}
\put(1,7){$\ROh \pi_F(C,p)$}
\put(5,1.5){\vector(0,1){4.5}}
\put(7.5,6.5){\vector(3,-1){3.5}}
\put(7.5,1.5){\vector(3,2){3.5}}
\end{picture}
\eeq

$\RMod$ being cocomplete so is $\cE = \RModCop$ as a functor category into a cocomplete category. Thus the functor $\ROh \pi_F$ has a colimit $lF = \text{colimit}(\int F \xrarr{\pi_F} \cC \xrarr{\ROh} \cE)$, and each cocone as above arises from the existence of a unique vertical arrow as in:
\beq
\setlength{\unitlength}{0.5cm}
\begin{picture}(15,9)(0,0)
\thicklines
\put(1,8){$\ROh \pi_F(C,p)$}
\put(4,4){$u_*$}
\put(1,1){$\ROh \pi_F(C',p')$}
\put(13.5,1){$E$}
\put(14,8){$l(F)$}
\put(5,2){\vector(0,1){5}}
\put(8,1){\vector(1,0){3}}
\put(6,7){\vector(3,-2){7}}
\multiput(6.7,2.8)(0.3,0.2){22}{\circle*{0.15}}
\put(13,7){\vector(1,1){0.5}}
\multiput(7.5,8)(0.5,0){11}{\line(1,0){0.3}}
\put(12.5,8){\vector(1,0){0.5}}
\multiput(14,7)(0,-1){5}{\line(0,-1){0.3}}
\put(14,3){\vector(0,-1){0.5}}
\end{picture}
\eeq
Hence:
\beq
\text{Nat}(F, r(E)) \simeq \Hom_{\cE}(l(F),E)
\eeq
is natural in $F$ and $E$, thus $l \dashv r$.
\end{proof}

\begin{Correp}
Any $R$-linear $F \in \RModCop$ is a colimit of representables.
\end{Correp}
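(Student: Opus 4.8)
The plan is to identify the left adjoint $l$ of Proposition \ref{rep} with the identity functor on $\cE = \RModCop$, so that the isomorphism $F \cong l(F)$ exhibits $F$ as exactly the colimit defining $l(F)$, which is a colimit of representables by construction.

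First I would apply the representation-theoretic Yoneda lemma (Lemma \ref{Yoneda}) to the right adjoint $r$. For any object $E$ of $\cE$ and any $C \in \Ob(\cC)$ we have by definition $r(E)(C) = \Hom_{\cE}(\ROhC, E) = \Nat(\ROhC, E)$, and Lemma \ref{Yoneda} supplies a natural isomorphism $\Nat(\ROhC, E) \simeq E(C)$. I would check that this isomorphism is natural in $C$, so that it assembles into an isomorphism of $\RMod$-valued functors $r(E) \cong E$, carrying along the module structure via the $\str$ decorations used in the proof of Lemma \ref{Yoneda}. The conclusion of this step is that $r$ is naturally isomorphic to the identity functor on $\cE$.

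Next, Proposition \ref{rep} furnishes a natural isomorphism $\Nat(F, r(E)) \simeq \Hom_{\cE}(l(F), E)$ in both $F$ and $E$. Substituting $r(E) \cong E$ yields $\Hom_{\cE}(l(F), E) \cong \Nat(F, E) = \Hom_{\cE}(F, E)$, naturally in $E$; since $\cE$ has small hom-sets, the Yoneda lemma internal to $\cE$ then forces a natural isomorphism $l(F) \cong F$. Hence
\[ F \cong l(F) = \text{colimit}\big( \int_{\cC} F \xrarr{\pi_F} \cC \xrarr{\ROh} \cE \big). \]
The diagram appearing here sends each object $(C,p)$ of $\int F$ to $\ROh \pi_F(C,p) = \ROhC$, which is by definition a representable, so the displayed colimit is a colimit of representables and the claim follows.

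The main obstacle is the first step: one must verify that the bijection of Lemma \ref{Yoneda} is genuinely natural in both variables \emph{as an isomorphism of $R$-module-valued functors}, and not merely a pointwise bijection of underlying selection sets. In particular one has to track that the adjoined selection object $\str_{F(C)} \in \underline{F(C)}^{+}$ is transported correctly, so that the $R$-module structure — rather than only the underlying abelian group of selections — is preserved compatibly across morphisms of $\cE$. Once this naturality is secured, the remaining adjunction argument is purely formal.
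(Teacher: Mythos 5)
Your proposal is correct and follows essentially the same route as the paper: both establish $r \simeq id_{\cE}$ via the $\RMod$-valued Yoneda lemma and then transfer this to $l \simeq id_{\cE}$ through the adjunction $\Nat(F, r(E)) \simeq \Hom_{\cE}(l(F),E)$, exhibiting $F \simeq l(F)$ as the colimit over $\int F$ of representables. Your version is in fact slightly more careful than the paper's, which asserts $l \simeq id_{\cE}$ directly from $r \simeq id_{\cE}$ without spelling out the Yoneda-in-$\cE$ step, and which leaves implicit the naturality-in-$C$ verification you rightly flag as the main point to check.
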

\begin{proof}
For such an $F$, Yoneda states that $r(F) = \text{Nat}(\ROh, F) \simeq F$, thus $r \simeq id_{\cE}$, which implies that $l \simeq id_{\cE}$ since $l: \cE \rightleftarrows \cE: r$, and we conclude that:
\beq
F \simeq l(F) = \text{colimit}(\int F \xrarr{\pi_F} \cC \xrarr{\ROh} \cE)
\eeq
which means that any $R$-linear element of $\RModCop$ is a colimit of $\RMod$-valued representables.
\end{proof}

We now prove the main theorem. By Proposition \ref{rep} and its Corollary, given $\cC$ a small category, $R$ a commutative ring, $\RModCop$ has a small set of generators, again keeping in mind that $\RModCop$ denotes $R$-linear functors from $\cC^{op}$ to $\RMod$ where for $\cC$ we take the category of selections of objects and morphisms of $\cC$ endowed with its $R$-module structure. This is true in particular of categories of $R$-natural representations, thus $\RNatModCop$ has a small set of generators. $\sRNatModCop$ will be defined later as a subcategory thereof, so it has a small set of generators as well.\\

Conversely let $\cE$ be a category with small hom-sets and suppose it has a small set of objects that generates it. Let $\cC$ be the small full subcategory of $\cE$ whose objects are the elements of the small set of generators just discussed. Then $\forall E \in \Ob(\cE)$ we have epimorphic families:
\beq
\Ed(C_i) = \{ C_i \rarr E  \}
\eeq
for each object $C_i$ of $\cC$. $E$ is defined by any of these sets as well as the way those morphisms glue together. In order to do so one has to group morphisms together, an operation we denote by $+$. For $i$ fixed, denote by $x(C_i)$ the morphism $x: C_i \rarr E$. Then grouping two such morphisms together corresponds to considering:
\beq
\setlength{\unitlength}{0.5cm}
\begin{picture}(7,10)(0,-2)
\thinlines
\put(1,6){\oval(2,2)[b]}
\put(6,6){\oval(2,2)[b]}
\thicklines
\thicklines
\put(1,6){\oval(2,2)[t]}
\put(6,6){\oval(2,2)[t]}
\put(3.5,1){\oval(3,2)}
\put(0,4){\line(0,1){2}}
\qbezier(0,4)(2,3)(2,1)
\put(3.5,6){\oval(3,4)[b]}
\put(7,4){\line(0,1){2}}
\qbezier(7,4)(5,3)(5,1)
\put(0,6){$x(C_i)$}
\put(5,6){$y(C_i)$}
\put(1,-1.5){$x(C_i) + y(C_i)$}
\put(3.5,-0.5){\line(0,1){1.5}}
\put(3.5,1){\circle*{0.2}}
\end{picture} \label{Eop}
\eeq
In what follows we will focus on how morphisms of any given set $\Ed(C_i)$ collate together, and therefore we will omit mention of the object $C_i$. Note that considering two objects $x$ and $y$ is equivalent to selecting those objects. Thus one can work with selection maps denoted by the same letter denoting the selected objects. Picking two such selections as in \eqref{Eop} corresponds to having a coherent abelian operad being defined on $\cE$. Selections form a tensor category, which is further braided, symmetric, with unit $ = 0$, no selection, the inverse for $+$ being the removal of a selection. The addition $+$ is associative, commutative. Thus selection objects form a symmetric monoidal category with duals on which a coherent abelian operad is defined, in particular a $\mathbb{Z}$-module by Lemma \ref{RnCat}. Further that selection of objects is made from a collection of mathematical objects, so depending on such a collection selections acquire a color. Let $R$ denote the set of such colors. This can be made into a symmetric monoidal category with duals. We have a sub-coherent abelian operad $\cO_R \in \cO^{(2|1)}$ defined on $R$ as well. In the same manner that the objects $x$ have been replaced by selection objects, selections in $R$ are viewed as choices of such selections (to avoid talking about selections of selections). The unit for the addition in $\cO_R$ is $\eset$ = no selection, the dual for + of each such selection is the removal of that selection. The unit for the product $\cdot$ in $\cO_R$ is $\bland$. This is a non-coherent operation. Thus each $\Ed(C_i) = \{x: C_i \rarr E \}$ gives rise to an assembly category, hence a $R$-module by Lemma \ref{RnCat}. More precisely it is a $R$-natural module by construction.\\

All such associations $C_i \mapsto \Ed(C_i)$ assemble into a map $\Ed: \cC \rarr \RNatMod$. Further any map $f: C_i \rarr C_j$ as in:
\beq
\setlength{\unitlength}{0.5cm}
\begin{picture}(7,6)(0,0)
\thicklines
\put(0,5){$f: C_i$}
\put(7,5){$C_j$}
\put(7,0){$E$}
\put(8,3){$x$}
\put(3,5){\vector(1,0){3}}
\put(7.5,4){\vector(0,-1){3}}
\multiput(3,4)(0.2,-0.2){17}{\circle*{0.15}}
\put(6,1){\vector(1,-1){0.5}}
\end{picture}
\eeq
induces a morphism $\Ed(C_j) \rarr \Ed(C_i)$ so $\Ed$ yields an object of $\cC^{op} \rarr \RNatMod$. As argued in the proof of Giraud's theorem, $E$ as a standalone object is equivalently given by any of the sets $\Ed(C_i)$ and gluing, but since $\cE$ has a set of generators, all such sets should be considered, hence $E$ is equivalently given by $\coprod_{C \in \Ob(\cC)} \Ed(C)+\cup$, which projects down to $\coprod_{C \in \Ob(\cC)} \Ed(C) \simeq \Ed$, a functor $\cC^{op} \rarr \RNatMod$.\\

Thus we have a functor $\phi : \cE \rarr \RModCop$. Indeed given a category $\cE$ with small hom-sets, let $\epiplus \cE$ be the category whose objects are sets $\text{set}_E+\cup$ of epimorphic families $\Ed(X)+\cup$ of morphisms, $X \in \Ob(\cE)$, for each object $E$ of $\cE$, and whose morphisms are induced by morphisms in $\cE$. Hence:
\beq
\text{set}_E+\cup = \coprod_{X \in \Ob(\cE)} \Ed(X)+\cup \simeq E \nonumber
\eeq

If $E$ and $F$ are two objects of $\cE$ then a morphims $E \rarr F$ in $\cE$ clearly induces a morphism $\Ed(X) \rarr \Fd(X)$ for all $X$, hence a morphism $\text{set}_E+\cup \rarr \text{set}_F+\cup$ in $\epiplus \cE$. It is clear that $\cE \simeq \epiplus \cE$ if $\cE$ has a small set of generators. Further there is a forgetful functor:
\beq
\coprod_{X \in \Ob(\cE)} \Ed(X)+\cup \rarr \coprod_{X \in \Ob(\cE)} \Ed(X)
\eeq

The operation of collecting morphisms to be glued defines a $\mathbb{Z}$-module structure on each $\Ed(X)$, thus such a disjoint union $\coprod \Ed(X)$ forms an element $\RNatMod(-,E)$ of $\RNatMod^{\cE^{op}}$. Now note that if $\cE$ has a small set of generators then it is sufficient to consider the $X$'s above to being objects of $\cC$, hence we have a functor:
\begin{align}
\phi: \cE & \xrarr{\sim} \text{epi}+\cE \rarr \RNatModCop \nonumber \\
 E & \mapsto \phi(E) = \RNatOh_{E} = \RNatMod(-,E) \nonumber \\
 f:E \mapsto F & \mapsto \phi(f): \RNatOh_E \mapsto \RNatMod(-,fE)
\end{align}

We would like an equivalence. Observe that $R$ as a set is the set of choices of selections of maps $x: C_i \rarr E$ for some $E$, so given a $R$-natural module $M$, each element $x$ of $M$ is a map $C_i \rarr E_x$. However $F(C)$  being in $\RNatMod$ and given the definition of $R$, there is + on $F(C)$ corresponding to a gluing of images into a same object $E$, that is $\forall x \in F(C)$, $E_{C,x} = E_C$. For $C,C' \in \Ob(\cC)$ we may have $E_C \neq E_{C'}$. To have the same object we consider only those functors into $\RNatMod$ all of whose $R$-modules are morphisms into a same object $E$. Further we consider only those modules $F(C)$ that consist of morphisms from a same source. Denote by $\sRNatModCop$ this functor category. Further define an equivalence relation on $\sRNatModCop$ by $F \sim F'$ if and only if for any $C \in \Ob(\cC)$, $F(C) = \{D \rarr E_F \}$, $F'(C) = \{ D' \rarr E_{F'} \}$, $E_F = E_{F'}$. Denote by $[F]$ the equivalence class of $F$ in $\sRNatModCop / \sim$. $\Phi$ descends to $\phi: \cE \rarr \sRNatModCop / \sim$.\\

We define the following functor:
\begin{align}
\Pi_0 : \sRNatModCop /\sim & \rarr \cE  \nonumber \\
[F] & \mapsto E_F \nonumber \\
\psi: [F] \Lrarr [F'] &\mapsto \Pi_0(\psi)  = E_F \rarr E_{F'}
\end{align}
Then:
\begin{align}
\Pi_0 \circ \phi(E,E') &= \Pi_0 [\RNatOh_E[E']]  \nonumber \\
&= E_{\RNatOh_E}   \nonumber \\
&= E
\end{align}
On the other hand:
\begin{align}
\phi \circ \Pi_0(F) &= \phi(E_F) \nonumber \\
&= [\RNatOh_{E_F}] \nonumber \\
&= [F]
\end{align}
Hence we have an equivalence, which proves the theorem.$\Box$\\

Note that this theorem pushes the restrictions onto the $R$-modules and not so much onto the category $\cE$ itself. In the next section we will prove a more general theorem, akin to Giraud's theorem for sheaves of sets, linking categories satisfying Giraud's conditions to be a Grothendieck topos which are in addition enriched in assembly categories in a sense to be precised, and sheaves of $R$-modules.

\section{Sheaves of $R$-modules and Grothendieck topos enriched in assembly categories}
A quick fix to the problem encountered in the previous section is to have $\cE$ enriched in an assembly category. It becomes clear that one needs $\cE$ to be cocomplete. Giraud showed that any category satisfying his conditions was cocomplete. One could start with a cocomplete category without imposing his conditions. It becomes clear as one progresses through the proof of a general theorem involving categories of $R$-modules representations that one needs Giraud's conditions, first because we need a notion of gluing which translates on the $R$-module side as the existence of matching families for a Grothendieck topology, but in addition we will prove an equivalence derived from an adjunction which necessitates all of Giraud's conditions. Hence we arrive at the following general statement:
\begin{ultthm}
A category $\cE$ with small hom-sets and finite limits is equivalent to a category of representations $\RModCop$ for some commutative ring $R$ if and only if $\cE$ satisfies Giraud's axioms for being a Grothendieck topos and it is enriched in a strict assembly category parametrized by a $R$-module.
\end{ultthm}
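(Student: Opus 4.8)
The plan is to prove both implications by bolting the representation-theoretic machinery of Section 4 onto Giraud's Theorem \ref{Gir}, with the enrichment and Lemma \ref{RnCat} serving as the bridge that upgrades the coefficient ring from $\mathbb{Z}$ to the prescribed $R$. For the direction assuming Giraud's axioms together with the enrichment, I would first invoke Theorem \ref{Gir}: since $\cE$ has small hom-sets, finite limits and satisfies (i)--(v), it is a Grothendieck topos, so $\cE \simeq \ShCJ$, where $\cC$ is the small full subcategory of generators furnished by axiom (v) and $J$ is the topology generated by the epimorphic-family basis $K$. As in Proposition \ref{sheaf} and in the main theorem of Section 4, every object $E$ is reconstructed from the selection sets $\Ed(C) = \{C \rarr E\}$ together with their gluing, producing a functor $\phi: \cE \rarr \RModCop$, $E \mapsto \RNatMod(-,E)$. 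The essential new input is the enrichment: whereas Section 4 only equipped each $\Ed(C)$ with the $\mathbb{Z}$-module structure coming from collecting and removing selections, enrichment in a strict assembly category parametrized by an $R$-module promotes this, via the forgetful correspondence of Lemma \ref{RnCat}, to a genuine $R$-module structure over the given ring $R$. The Grothendieck topology inherited from the topos structure then guarantees, exactly as in Proposition \ref{sheaf}, that each $\Ed$ satisfies the matching-family and unique-amalgamation condition, so $\phi$ factors through the $R$-linear sheaves sitting inside $\RModCop$.

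It then remains to see that $\phi$ is an equivalence. Here I would use the adjunction $l \dashv r$ of Proposition \ref{rep} together with the Yoneda lemma for $R$-module valued functors and the corollary that every $R$-linear functor is a colimit of representables $\ROhC$. On $R$-linear functors one has $r \simeq \mathrm{id}_{\cE}$ and hence $l \simeq \mathrm{id}_{\cE}$, so the unit and counit of the adjunction are isomorphisms precisely when matching families glue uniquely, which is ensured by Giraud's conditions (ii)--(iv). This would yield essential surjectivity and full faithfulness of $\phi$, whence $\cE \simeq \RModCop$.

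For the converse, assume $\cE \simeq \RModCop$. The enrichment is immediate: since $R$ is commutative, $\Nat(F,G)$ carries an $R$-module structure, as observed at the start of the proof of Proposition \ref{rep}, so $\RModCop$ is enriched in $\RMod$; by the second half of Lemma \ref{RnCat} every such hom-$R$-module is a strict $R$-colored assembly category, and thus $\cE$ is enriched in a strict assembly category parametrized by an $R$-module. For Giraud's axioms I would verify (i)--(v) for $\RModCop$: the representables $\ROhC$ form a small generating set, giving axiom (v), while cocompleteness of $\RMod$ supplies all small coproducts and coequalizers, and topologizing $\cC$ by the epimorphic-family basis furnishes the disjointness, stability and exactness conditions (i)--(iv), so that the topologized $\RModCop$ is realized as a category of sheaves of $R$-modules and hence satisfies Giraud's axioms.

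The main obstacle I anticipate is the middle step, namely showing that $\phi$ is an equivalence, i.e. that the counit $lr \Rarr \mathrm{id}$ and the unit $\mathrm{id} \Rarr rl$ are isomorphisms. This is where all of Giraud's conditions are genuinely needed: the mere existence of a generating set, as in the main theorem of Section 4, forced the restrictive $\RNatMod$ structure, and it is only the combination of stable exactness (iv), epimorphisms being coequalizers (ii) and the quotient condition (iii) that renders the gluing of matching families both possible and unique, thereby upgrading the adjunction of Proposition \ref{rep} into an equivalence compatible with the $R$-linear enrichment.
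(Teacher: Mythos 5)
Your forward direction has a genuine gap at its center: you invoke the adjunction $l \dashv r$ of Proposition \ref{rep}, but that is an \emph{endofunctor} adjunction on $\cE = \RModCop$ itself --- both $l$ and $r$ map the representation category to itself, and its role in the paper is only to establish that $R$-linear presheaves are colimits of representables. It cannot, by itself, relate an abstract $\cE$ to $\RModCop$. What the paper actually constructs is the Hom-tensor adjunction $- \otimes_{\cC} A : \RModCop \rightleftarrows \cE : \RMod(A,-)$ along the inclusion $A: \cC \rarr \cE$ of the generating subcategory, which requires first deducing cocompleteness of $\cE$ from conditions (i)--(iv) and using the strict-assembly enrichment to identify $\Hom_{\cE}(E,E')$ with $\RMod(E,E')$. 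Moreover, your claimed conclusion $\cE \simeq \RModCop$ is too strong and is not what the paper proves: the unit $\eta_F$ of the Hom-tensor adjunction is an isomorphism only for $F$ a \emph{sheaf}, so the equivalence obtained is $\cE \simeq \ShRCJ$, the category of sheaves of $R$-modules on the site $(\cC,J)$ --- the ``topologized'' representation category announced in the abstract and introduction. (If the literal presheaf statement held, every suitably enriched Grothendieck topos would be a presheaf category, which fails for any nontrivial topology.) The technical core of the paper --- the counit isomorphism, the proof that $\RMod(-,E)$ is a $J$-sheaf (which needs $\RMod(E,-)$ to preserve limits, including the structural elements $\str$), representables being units for $\otimes_{\cC}$, and colimit preservation of $\RMod(A,-)$ via local surjectivity, the sheafification adjunction $a \dashv i$ and $a \circ i \simeq \mathrm{id}$ --- is precisely what your one-line assertion that ``unit and counit are isomorphisms \dots ensured by (ii)--(iv)'' would have to replace, and it does not.

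Your converse is closer in shape but aims at the wrong target as well: the paper verifies Giraud's axioms for $\ShRCJ$, not for the presheaf category. Conditions (i)--(iv) are checked pointwise in $\RMod$ (treating underlying sets and structural maps $\str$ separately) and transported to sheaves through the left-exact left adjoint $a$, while (v) follows from sheaves being colimits of representables; the enrichment claim via commutativity of $R$ and Lemma \ref{RnCat} you did identify correctly. But your remark that ``topologizing $\cC$ by the epimorphic-family basis furnishes (i)--(iv)'' conflates the two directions: in the converse the site is given, and the axioms follow from exactness of $a$ and the corresponding facts in $\RMod$, not from a choice of topology. Finally, your opening move of first applying Theorem \ref{Gir} to get $\cE \simeq \ShCJ$ as sheaves of sets and then decorating with $R$-module structure is not the paper's route and is never actually carried out in your sketch; the paper works with $\RMod$-valued homs from the start.
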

The proof is surprisingly similar to that for showing that categories satisfying Giraud's axioms are Grothendieck topos. Thus we will not repeat the proof that is classic in its entirety. Rather we will mention the generalization of Giraud's results to our setting to show the path to follow, if the proofs are identical we will just say for the sake of stating them that they are the same, and whenever one has to be detailed in the case of $R$-modules we will show the details. Anytime we need the conditions of Giraud's theorem (i)-(v) those are found in Theorem \ref{Gir}. \\

We first show, using the notations of \cite{ML}, that if $\cC$ denotes the full subcategory of $\cE$ whose objects are the generators of $\cE$, that for the inclusion functor $A: \cC \rarr \cE$ we have a Hom-tensor adjunction:
\beq
- \otimes_{\cC}A: \RModCop \rightleftarrows \cE: \ROh = \RMod(A,-) \nonumber
\eeq
Since $\cE$ is enriched in a strict assembly category parametrized by some $R$-module, there is a forgetful map $\Hom_{\cE}(E,E') \xrarr{\sim} \RMod(E,E')$ that is an isomorphism, whence the existence of a functor $\RMod(A,-)$ as shown above. To construct such an equivalence we start from the functors on the right hand side. For $\cC$ a small category, $\cE$ a cocomplete category, $A: \cC \rarr \cE$ a functor, we have:
\begin{align}
R: \cE & \rarr \RModCop \nonumber \\
E & \mapsto R(E): C \mapsto R(E)(C) = \ROh_E(C) = \RMod(C,E)
\end{align}
has a left adjoint:
\beq
L: \RModCop \rightleftarrows \cE:R \nonumber
\eeq
given by:
\beq
L_A(F) = \lim_{\rarr}(\int F \xrarr{\pi_F} \cC \xrarr{A} \cE) \nonumber
\eeq
This we will use to construct $- \otimes_{\cC}A$. Thus we need $\cE$ to be cocomplete. For this to happen we use App.2.1 \& 2.2 of \cite{ML} for the case of $\RMod$, which gives conditions for $\cE$ to be cocomplete. These two results carry over to the $\RMod$ case since those proofs are independent of any enrichment. Lemma App.2.2 of \cite{ML} uses the fact that $\cE$ has finite limits and (iii), that any equivalence relation has a quotient, and that any coequalizer is stable under pullback by (ii) and (iv). Proposition App.2.1 then uses the existence of coequalizers for equivalence relations (iii) and the fact that coproducts and epis are stable under pullback. Thus we impose Giraud's conditions for cocompleteness, that is existence of small hom-sets, finite limits and (i)-(iv). We now prove the above result about the existence of a left adjoint to $R$. Here $\RMod$ is for some fixed commutative ring $R$ with a unit. $\cE$ is assumed to be enriched in a $R$-colored assembly category. For $F \in \RModCop$, any natural transformation $\phi: F \Lrarr R(E)$ is given by a family of $R$-module homomorphisms:
\beq
\{\phi_C: F(C) \rarr \ROh_E(C) \}
\eeq
natural in $C$. For $u:C' \rarr C$ in $\cE$ we have a commutative diagram:
\beq
\begin{CD}
F(C) @>\phi_C>> \ROh_E(C) \\
@VF(u)VV @VVA(u)^*V \\
F(C') @>>\phi_{C'}> \ROh_E(C')
\end{CD} \label{ONE}
\eeq
element-wise:
\beq
\begin{CD}
F(C) \ni f @>\phi_C>> \{\phi_C(f):A(C) \rarr E \}=\RMod(C,E)_f \\
@VF(u)VV @VVV \\
F(C') \ni f' @>>\phi_{C'}> \{ \phi_{C'}(f'):A(C') \rarr E \} = \RMod(C',E)_{f'}
\end{CD} \label{ceedee}
\eeq
Regarding the algebraic structures, if $\str_M$ denotes the $R$-module structure on a $R$-module $M$ then:
\beq
\begin{CD}
\str_{F(C)} @>\phi_C>> \str_{\ROh_E(C)} \\
@VF(u)VV @VVV \\
\str_{F(C')} @>>\phi_{C'}> \str_{\ROh_E(C')}
\end{CD} \label{STR}
\eeq

Hence \eqref{ONE} is fully described by \eqref{ceedee} and \eqref{STR}. With $F(u)f = f \cdot u = f'$ element-wise, we have:
\beq
\setlength{\unitlength}{0.5cm}
\begin{picture}(13,8)(0,0)
\thicklines
\put(3,0){$A \pi_F(C',f')$}
\put(9,1){$\phi_{C'}(f')$}
\put(12,4){$E$}
\put(9,7){$\phi_C(f)$}
\put(3,4){$u_*$}
\put(3,7){$A\pi_F(C,f)$}
\put(5,1.5){\vector(0,1){4.5}}
\put(7.5,6.5){\vector(3,-1){3.5}}
\put(7.5,1.5){\vector(3,2){3.5}}
\end{picture}
\eeq
Moreover each such map $A\pi_F(C,f) \rarr E$ and $A \pi_F(C',f') \rarr E$ is an element of $\ROh_E(C)$ and $\ROh_E(C')$ respectively, and those have a $R$-module structure compatible with those of $F(C)$ and $F(C')$ respectively via $\phi_C(\str_{F(C)}) = \str_{\ROh_E(C)}$ and $\phi_{C'}(\str_{F(C')}) = \str_{\ROh_E(C')}$. We can represent this as a decoration of each generic element $A\pi_F(C) \rarr E$ of $\ROh_E(C)$ as follows:
\beq
\setlength{\unitlength}{0.5cm}
\begin{picture}(13,8)(0,0)
\thicklines
\put(2,0){$A \pi_F(C',\str_{F(C')})$}
\put(9,1){$\phi_{C'}(\str_{F(C')})$}
\put(12,4){$E$}
\put(9,7){$\phi_C(\str_{F(C)})$}
\put(3,4){$u_*$}
\put(2,7){$A\pi_F(C,\str_{F(C)})$}
\put(5,1.5){\vector(0,1){4.5}}
\put(7.5,6.5){\vector(3,-1){3.5}}
\put(7.5,1.5){\vector(3,2){3.5}}
\end{picture}
\eeq
by regarding the algebraic structure $\str_{F(C)}$ as an element of $\underline{F(C)}^+ \simeq F(C)$. Hence we have a cocone on $A\pi_F$ over $E \in \cE$ since the morphisms involved are elements of some $R$-module, $\cE$ being strict, and $\cE$ being cocomplete, there is a unique arrow $LF \rarr E$ in $\cE$ giving rise to such a cocone:
\beq
\setlength{\unitlength}{0.5cm}
\begin{picture}(15,9)(0,0)
\thicklines
\put(2.5,8){$A \pi_F(C,f)$}
\put(4,4){$u_*$}
\put(2,1){$A \pi_F(C',f')$}
\put(13.5,1){$E$}
\put(14,8){$L(F)$}
\put(5,2){\vector(0,1){5}}
\put(7,1){\vector(1,0){5}}
\put(10,1.5){\circle*{0.3}}
\put(6,7){\vector(3,-2){7}}
\put(7,6){\circle*{0.3}}
\multiput(6.7,2.8)(0.3,0.2){22}{\circle*{0.15}}
\put(7.5,4){\circle*{0.3}}
\put(13,7){\vector(1,1){0.5}}
\multiput(7,8)(0.5,0){12}{\line(1,0){0.3}}
\put(12.5,8){\vector(1,0){0.5}}
\put(10,8.5){\circle*{0.3}}
\multiput(14,7)(0,-1){5}{\line(0,-1){0.3}}
\put(14,3){\vector(0,-1){0.5}}
\end{picture}
\eeq
where we have represented by a fat dot the fact that those are elements of a $R$-module. Thus all $\phi_C: F(C) \rarr \ROh_E(C)$ collectively are equivalently given by a unique map $LF \rarr E$ in $\cE$ and all such maps $\phi_C$ combine to give a $F \Lrarr R(E)$. Thus:
\beq
\text{Nat}(F,R(E)) \simeq \Hom_{\cE}(LF,E)
\eeq
or equivalently:
\beq
\Hom_{\RModCop}(F, \ROh_E) \simeq \Hom_{\cE}(LF,E)
\eeq
that is $L \dashv R$. \\

Note that $\lim_{\rarr}(\int F \rarr \cC \rarr \cE)$ can be expressed as a tensor product as done in \cite{ML} for the case $F \in (\text{Sets})^{\cC^{op}}$. This carries over to the case of $F \in \RModCop$ and $L(F)$ can be presented as a coequalizer:
\beq
\coprod_{\substack{C, f \in F(C) \\ u:C' \rarr C}} A(C') \rrarrop^{\theta}_{\tau} \coprod_{C, f \in F(C)} A(C) \xrarr{\phi} L(F) = F \otimes_{\cC}A
\eeq
where:
\begin{align}
\theta(A(C')_{C,f,u}) &= A(C')_{f'=fu} \nonumber \\
\tau(A(C')_{C,f,u}) &=A(C)_f
\end{align}
which includes in particular the special case $f = \str_{F(C)}$ and $f' = \str_{F(C')}$ since $F(C) \simeq \underline{F(C)}^+$. Thus $L: \RModCop \rlarr \cE: \ROh$ can be denoted by:
\beq
-\otimes_{\cC} A: \RModCop \rlarr \cE: \RMod(A,-)
\eeq
We now investigate the unit and counit of this adjunction:
\begin{align}
\eta: 1_{\RModCop} & \Lrarr \RMod(A, -\otimes_{\cC}A) \nonumber \\
F & \rarr \RMod(A, F \otimes_{\cC}A)
\end{align}
with components:
\begin{align}
\eta_{F,C}: F(C) &\rarr \RMod(A(C), F \otimes_{\cC}A) \nonumber \\
f &\mapsto f\otimes - : A(C) \rarr F\otimes_{\cC}A
\end{align}
and the counit:
\beq
\epsilon_E: \RMod(A, E) \otimes_{\cC}A  \rarr E
\eeq
as the vertical map shown below:
\beq
\setlength{\unitlength}{0.5cm}
\begin{picture}(25,7)(0,0)
\thicklines
\put(0,6){$\coprod_{\substack{u:C' \rarr C \\ x: A(C) \rarr E}}A(C')$}
\put(7,6){\vector(1,0){3}}
\put(7,6.5){\vector(1,0){3}}
\put(8,7){$\theta$}
\put(8,5.5){$\tau$}
\put(11,6){$\coprod_{\substack{C \in \Ob(\cC) \\ x: A(C) \rarr E}}A(C)$}
\put(17,6){\vector(1,0){4}}
\put(19,6.5){$\pi$}
\put(22,6){$\RMod(A,E) \otimes_{\cC} A$}
\multiput(25,5)(0,-0.5){8}{\line(0,-1){0.3}}
\put(25,1){\vector(0,-1){0.5}}
\put(24.5,-1){$E$}
\put(17,5){\vector(4,-3){6.5}}
\put(19,2){$X$}
\end{picture} \nonumber
\eeq
with
\beq
X = \coprod x = \coprod_{\substack{C \in \Ob(\cC) \\ x:A(C) \rarr E}} \{ A(C) \xrarr{x} E \}
\eeq
Note the $R$-module structures on $\RMod(A(C),E)$ and $\RMod(A(C'),E)$ are compatible with $u$.
\begin{unit}
$\epsilon_E$ is an isomorphism for all $E \in \Ob(\cE)$.
\end{unit}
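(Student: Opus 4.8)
The plan is to show that $\epsilon_E$ is invertible by identifying $E$, equipped with the canonical map $X=\coprod x\colon \coprod_{C,x}A(C)\rarr E$, as the coequalizer of the parallel pair $\theta,\tau$ that defines $\RMod(A,E)\otimes_{\cC}A$. Since $\epsilon_E$ is the unique map with $\epsilon_E\circ\pi=X$, once $X$ is recognized as $\mathrm{coeq}(\theta,\tau)$ the universal property forces $\epsilon_E$ to be an isomorphism. I would organize this as proving $\epsilon_E$ epi and then $\epsilon_E$ mono, equivalently verifying the two halves of the coequalizer property for $X$.

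For the epi part: because $\cC$ is the full subcategory of generators, the family $\{x\colon A(C)\rarr E\}$ is epimorphic, so $X$ is an epimorphism. As $X=\epsilon_E\circ\pi$ and $\pi$ is epi (being a coequalizer projection), $\epsilon_E$ is epi. One also checks at once that $X$ coequalizes the pair: on the summand attached to an arrow $u\colon C'\rarr C$ and an element $x\colon A(C)\rarr E$, both $X\theta$ and $X\tau$ equal $x\circ A(u)$.

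The substantive step is to show $X$ has the \emph{universal} property of $\mathrm{coeq}(\theta,\tau)$. By condition (ii) every epimorphism is a coequalizer, and since $\cE$ has finite limits (with (iii) guaranteeing quotients of equivalence relations) this presents $X$ as the coequalizer of its own kernel pair $(p_0,p_1)\colon K\rrarr\coprod_{C,x}A(C)$. Using disjointness and stability of coproducts under pullback from (i), I would decompose $K=\coprod_{(C,x),(C',x')}A(C)\times_E A(C')$. Each fibre product is an object of $\cE$, hence covered by an epimorphic family of generators $\{w\colon A(C'')\rarr A(C)\times_E A(C')\}$; assembling these gives an epimorphism $e\colon P\rarr K$. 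Because $\cC$ is \emph{full} in $\cE$, the two projections of such a $w$ are $A(u)$ and $A(v)$ for unique $u\colon C''\rarr C$ and $v\colon C''\rarr C'$, and the pullback condition reads $x\circ A(u)=x'\circ A(v)$, i.e. $xu=x'v$. Now given any $g\colon\coprod_{C,x}A(C)\rarr T$ with $g\theta=g\tau$, writing $g_{C,x}$ for the restriction of $g$ to the $(C,x)$-summand, the defining relations for $(C,x,u)$ and $(C',x',v)$ yield $g_{C,x}\circ A(u)=g_{C'',\,xu}=g_{C'',\,x'v}=g_{C',x'}\circ A(v)$, the middle equality holding because $xu=x'v$ makes those the same summand. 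This is exactly $g\circ p_0\circ e=g\circ p_1\circ e$; cancelling the epimorphism $e$ gives $g\circ p_0=g\circ p_1$, so $g$ factors uniquely through $X$. Hence $X=\mathrm{coeq}(\theta,\tau)$ and $\epsilon_E$ is an isomorphism.

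Throughout, the $R$-module structure is carried along exactly as in Proposition \ref{rep}: the structure element $\str_{F(C)}$ is treated as one more point of $\underline{F(C)}^+\simeq F(C)$, so the relations $\theta,\tau$ already contain the case $x=\str$, and the enrichment isomorphism $\Hom_{\cE}(-,-)\xrarr{\sim}\RMod(-,-)$ lets every diagram above be read in $\RMod$. I expect the main obstacle to be the third paragraph, namely proving that the elementary relations $\theta,\tau$ generate the \emph{full} kernel pair of $X$. This is where the Giraud axioms are genuinely used: (i) to split $K$ over the generators, the epimorphic-family property (v) to cover each $A(C)\times_E A(C')$, fullness of $\cC$ to recognize the projections as images of arrows of $\cC$, and (ii)–(iii) to present $X$ as the coequalizer of that kernel pair.
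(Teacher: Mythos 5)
Your proof is correct and follows essentially the same route as the paper: the paper also presents $E$ as a coequalizer of a pair $\alpha,\beta$ on $\coprod_s D$ indexed by commutative squares over generators and shows a map coequalizes $\alpha,\beta$ if and only if it coequalizes $\theta,\tau$, deferring the details to App.3.1 of \cite{ML}. Your third paragraph (kernel pair of $X$ split by (i), covered by generators via (v), fullness of $\cC$ identifying the projections, then cancelling the epi $e$) is precisely the content of that cited argument, spelled out, together with the same treatment of $\str_{F(C)}$ as an element of $\underline{F(C)}^+$.
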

\begin{proof}
Since $\cC$ generates $\cE$, there is an epi:
\beq
X: \coprod_{\substack{C \in \Ob(\cC) \\ x: C \rarr E}} C \rarr E
\eeq
Note that for that coproduct to be indexed by a small set, in particular we need $\cE$ to have small hom-sets. Now we argue as in \cite{ML} that $X$ fits into a coequalizer diagram:
\beq
\coprod_s D \rrarrop_{\beta}^{\alpha} \coprod_{\substack{C \in \Ob(\cC) \\ x: C \rarr E}} C \xrarr{X} E \label{DCE}
\eeq
where $s$ indexes commutative diagrams in $\cE$ of the form:
\beq
\begin{CD}
D @>h>> C \\
@VkVV @VVxV \\
C @>>x'> E
\end{CD}
\eeq
where $D \in \Ob(\cC)$ and $\alpha$ and $\beta$ have $h$ and $k$ for respective components. To prove the existence of such a coequalizer as in App.3.1 of \cite{ML}, one uses the fact that epis are coequalizers and that coproducts are stable under pullback. Having the coequalizer \eqref{DCE}, we now prove that for all $B \in \Ob(\cE)$, $Y: \coprod_{C,x} C \rarr B$ coequalizes $\alpha$ and $\beta$ if and only if it coequalizes $\theta$ and $\tau$. This would show:
\beq
\RMod(A,E) \otimes_{\cC}A \simeq B \simeq E
\eeq
The proof in our case would be exactly similar to that of App.3.1 of \cite{ML}. But this means $\epsilon_E$ is an isomorphism.
\end{proof}
Now to prove $\eta$ is an isomorphism we need $\RMod(A,E)$ to be a sheaf, so we need a topology on $\cC$. Since $\cC$ generates $\cE$, we take covers to be epimorphic families. We have proved already that this defines a Grothendieck topology $J$ on $\cC$. We now prove that for all $E \in \Ob(\cE)$, $\RMod(-,E): \cC^{op} \rarr \RMod$ is a sheaf for $J$ on $\cC$:
\begin{Esheaf}
For all $E \in \Ob(\cE)$, $\RMod(-,E)$ is a sheaf on $(\cC,J)$.
\end{Esheaf}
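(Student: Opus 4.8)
The plan is to reduce the sheaf condition for the $\RMod$-valued functor $\RMod(-,E)$ to the sheaf condition already established for the set-valued functor $\Ed = \Hom_\cE(-,E)$ in Proposition \ref{sheaf}, and then to upgrade the unique amalgamation furnished there to an $R$-linear one. First I would recall that a presheaf $F:\cC^{op}\rarr\RMod$ is a sheaf for $J$ precisely when, for every cover $\{f_i:C_i\rarr C\}\in K(C)$, the diagram
\beq
F(C)\rarr \prod_i F(C_i)\rrarrop \prod_{i,j}F(C_i\times_C C_j)
\eeq
is an equalizer in $\RMod$. Since products and equalizers of $R$-modules are computed on underlying sets with the evident module structure, the forgetful functor $\RMod\rarr\text{Sets}$ creates these limits; consequently $\RMod(-,E)$ is a sheaf in $\RMod$ if and only if its underlying set-valued functor is a sheaf of sets.

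Second, because $\cE$ is enriched in a strict assembly category parametrized by an $R$-module, the forgetful isomorphism $\Hom_\cE(C,E)\xrarr{\sim}\RMod(C,E)$ identifies, naturally in $C$, the underlying set of $\RMod(-,E)$ with $\Ed=\Hom_\cE(-,E)$. By Proposition \ref{sheaf} this $\Ed$ is a sheaf on $(\cC,J)$: given a matching family $\{x_i:C_i\rarr E\}$ whose members agree on each $C_i\times_C C_j$, the universal property of the fibered product $C_i\times_E C_j$ produces a unique amalgamation $x:C\rarr E$ with $x\cdot f_i=x_i$. This already delivers existence and uniqueness of amalgamations at the level of underlying sets, so the entire fibered-product argument of Proposition \ref{sheaf} may be invoked without modification.

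It remains to verify the $R$-module content, which I expect to be the only genuinely new step. The matching families constitute the underlying set of the equalizer of $\prod_i\RMod(C_i,E)\rrarrop\prod_{i,j}\RMod(C_i\times_C C_j,E)$, and this equalizer is an $R$-submodule because each restriction map $\cdot\,\pi_{ij}^{(k)}$ is an $R$-module homomorphism by Lemma \ref{RnCat}. The restriction map $\RMod(C,E)\rarr\prod_i\RMod(C_i,E)$ lands in this equalizer and, by the set-level statement above, is a bijection onto it; a bijective $R$-module homomorphism is an isomorphism of $R$-modules, so its inverse—the amalgamation map—is automatically $R$-linear. In particular the structure element $\str_{F(C)}$ amalgamates the $\str_{F(C_i)}$, exactly in the spirit of the decoration argument used in Lemma \ref{Yoneda}. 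The main obstacle is precisely this compatibility of the amalgamation with the module structure; once one records that limits in $\RMod$ are set-theoretic and that restrictions are $R$-linear, the sheaf axiom in $\RMod$ follows from Proposition \ref{sheaf} with no rework of the underlying geometry.
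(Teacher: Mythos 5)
Your argument is correct within the paper's framework, but it takes a genuinely different route from the paper's own proof, which never invokes Proposition \ref{sheaf}. The paper instead redoes a Mac Lane--Moerdijk-style argument in the enriched setting: for a covering sieve $S$ of $C$ it uses Giraud's axioms (epis are coequalizers; coproducts stable under pullback) to exhibit a coequalizer $\coprod B \rrarr \coprod_{g \in S} D \rarr C$ in $\cE$, then spends the bulk of the proof showing directly, by a cone-chasing argument with the structural elements $\str_M$ and $\underline{M}^+$, that $\RMod(E,-)$ preserves limits, so that by duality $\RMod(-,E)$ carries this coequalizer to the equalizer $\RMod(C,E) \rarr \prod_{g \in S}\RMod(D,E) \rrarr \prod_B \RMod(B,E)$, which is exactly the sheaf condition for $S$. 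Your reduction --- forgetful functor $\RMod \rarr \text{Sets}$ creates the relevant limits, the enrichment isomorphism $\Hom_{\cE}(C,E) \xrarr{\sim} \RMod(C,E)$ (which the paper itself asserts) identifies the underlying presheaf with $\Ed$, Proposition \ref{sheaf} supplies the set-level sheaf property, and a bijective $R$-module homomorphism is automatically an isomorphism --- is shorter, leans on the paper's own distinctive earlier result rather than re-adapting the classical machinery, and makes transparent that the only content beyond Proposition \ref{sheaf} is the (automatic) $R$-linearity of amalgamation. What the paper's route buys in exchange is the limit-preservation of the enriched hom-functor as a standalone fact, and it verifies the equalizer condition directly for arbitrary covering sieves $S \in J$, whereas you work with basis covers in $K$; the two are equivalent by the standard basis-versus-topology comparison, which you should cite explicitly. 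Two minor quibbles, neither affecting correctness: your appeal to Lemma \ref{RnCat} for $R$-linearity of the restriction maps $\cdot\,\pi_{ij}^{(k)}$ is slightly misdirected --- what is really needed is the enrichment hypothesis that makes $\RMod(-,E)$ an $\RMod$-valued functor at all, which the paper presupposes when defining $R(E)$ --- and your final paragraph (bijectivity onto the equalizer implies $R$-linear inverse) restates what your opening creation-of-limits observation already delivers.
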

\begin{proof}
Exactly as done in \cite{ML}, let $C \in \Ob(\cC)$, $S$ a covering sieve of $C$, $p_S: \coprod_{g \in S} D \xrarr{g} C$, an epi in $\cE$, hence a coequalizer. Using the fact that $\cE$ preserves coproducts under pullback and the fact that $\cC$ generates $\cE$, one obtains a coequalizing diagram:
\beq
\coprod B \rrarr \coprod_{g \in S} D \rarr C \label{coeq}
\eeq
We claim this is mapped by $\RMod(-,E)$ into an equalizer. For this to hold $\RMod(-,E)$ must map colimits to limits. We prove $\RMod(E,-)$ preserves limits, this will prove $\RMod(-,E)$ maps colimits to limits by duality. Let $J$ be an indexing category, $F:J \rarr \cC$ be a functor with a limit:
\beq
\setlength{\unitlength}{0.5cm}
\begin{picture}(7,8)(0,0)
\thicklines
\put(0,4){$\lim F$}
\put(3,1){$\mu_j$}
\put(5.5,1){$F_j$}
\put(6.5,4){$f_{ji}$}
\put(5.5,7){$F_i$}
\put(2,6){$\mu_i$}
\put(2,3){\vector(3,-1){3}}
\put(6,2){\vector(0,1){4}}
\put(2,5){\vector(3,2){3}}
\end{picture}
\eeq
Since $\cC \subset \cE$, $\cE$ being enriched in a strict assembly category, then morphisms displayed in this cone are elements of a $R$-module. This cone maps under $\RMod(E,-)$ to:
\beq
\setlength{\unitlength}{0.5cm}
\begin{picture}(7,8)(0,0)
\thicklines
\put(-1.5,4){$\RMod(E,\lim F)$}
\put(5.5,1){$\RMod(E,F_j)$}
\put(5.5,7){$\RMod(E,F_i)$}
\put(2,3){\vector(3,-1){3}}
\put(8,2){\vector(0,1){4}}
\put(2,5){\vector(3,2){3}}
\end{picture} \label{limcone}
\eeq
in $\RMod$. We show this is a limiting cone: for any cone from a $R$-module $M$:
\beq
\setlength{\unitlength}{0.5cm}
\begin{picture}(17,11)(0,0)
\thicklines
\put(1,1){$M$}
\put(11,1){$\RMod(E,F_j)$}
\put(11,9){$\RMod(E,F_i)$}
\put(0,9){$\RMod(E,\lim F)$}
\put(6,2){$\lambda_j$}
\put(4,4){$\lambda_i$}
\put(6,7){$\mu_{j*}$}
\put(8,10){$\mu_{i*}$}
\multiput(3,1)(0.5,0){14}{\line(1,0){0.3}}
\put(9.5,1){\vector(1,0){0.5}}
\multiput(3,2)(0.8,0.6){11}{\circle*{0.2}}
\put(11,8){\vector(1,1){0.5}}
\put(4,8){\vector(4,-3){8}}
\put(13,3){\vector(0,1){5}}
\put(7,9){\vector(1,0){3}}
\end{picture}
\eeq
$a \in M$ yields a cone:
\beq
\setlength{\unitlength}{0.5cm}
\begin{picture}(7,8)(0,0)
\thicklines
\put(1,4){$E$}
\put(3,1){$\lambda_j a$}
\put(5.5,1){$F_j$}
\put(5.5,7){$F_i$}
\put(2,6){$\lambda_i a$}
\put(2,3){\vector(3,-1){3}}
\put(6,2){\vector(0,1){4}}
\put(2,5){\vector(3,2){3}}
\end{picture}
\eeq
In particular we have a structural cone for $a = \str_M$:
\beq
\setlength{\unitlength}{0.5cm}
\begin{picture}(7,8)(0,0)
\thicklines
\put(1,4){$E$}
\put(2,1){$\lambda_j \str_M$}
\put(5.5,1){$F_j$}
\put(5.5,7){$F_i$}
\put(1,6.5){$\lambda_i \str_M$}
\put(2,3){\vector(3,-1){3}}
\put(6,2){\vector(0,1){4}}
\put(2,5){\vector(3,2){3}}
\end{picture}
\eeq
and by universality there is a unique $\lambda_a \in \Mor(\cC) \subset \Mor(\cE)$ for $a \in \underline{M}^+$ such that:
\beq
\setlength{\unitlength}{0.5cm}
\begin{picture}(17,11)(0,0)
\thicklines
\put(1.5,1){$E$}
\put(12.5,1){$F_j$}
\put(12.5,9){$F_i$}
\put(1,9){$\lim F$}
\put(6,7){$\mu_j$}
\put(7,10){$\mu_i$}
\multiput(3,1)(0.5,0){18}{\line(1,0){0.3}}
\put(11.5,1){\vector(1,0){0.5}}
\multiput(3,2)(0.8,0.6){11}{\circle*{0.2}}
\put(11,8){\vector(1,1){0.5}}
\put(4,8){\vector(4,-3){8}}
\put(13,3){\vector(0,1){5}}
\put(3.5,9){\vector(1,0){7}}
\put(2,3){\vector(0,1){5}}
\put(1,5){$\lambda_a$}
\end{picture}
\eeq
with $\mu_i \lambda_a = \lambda_i a$. Let $\Lambda(a) = \lambda_a$ for $a \in \underline{M}^+$. Then we get a map $\Lambda: M \rarr \RMod(E, \lim F)$ such that $\mu_{i*} \Lambda = \lambda_i$ for any $i \in J$ and $\Lambda$ is unique by construction, hence \eqref{limcone} is a limiting cone. Thus by applying $\RMod(-,E)$ to \eqref{coeq} we get an equalizer:
\beq
\prod_B \RMod(B,E) \leftleftarrows \prod_{g \in S} \RMod(D,E) \leftarrow \RMod(C,E) \nonumber
\eeq
in $\RMod$. Thus $C \mapsto \RMod(C,E)$ satisfies the sheaf condition for $S$, for all covering sieves $S$ in $J$, which implies that $C \mapsto \RMod(C,E)$ is a $J$-sheaf.
\end{proof}

Denote by $\ShRCJ$ the category of sheaves on $(\cC,J)$ valued in $\RMod$.\\

We now prove the unit $\eta: 1_{\RModCop} \Lrarr \RMod(A, -\otimes_{\cC}A)$ gives isomorphisms $\eta_F: F \rarr \RMod(A, F \otimes_{\cC}A)$ for $F \in \ShRCJ \subset \RModCop$ with components
\begin{align}
\eta_{F,C}: F(C) & \rarr \RMod(A(C), F \otimes_{\cC} A) \nonumber \\
f &\mapsto f\otimes - : A(C) \mapsto F \otimes_{\cC}A
\end{align}
Since any $F \in \ShRCJ$ is a colimit of representables, we first prove $\eta$ is an isomorphism on representables $a \RMod(-,E)$ where $a:\RModCop \rarr \ShRCJ$ is the associated sheaf functor. To do this we will need to prove that representables are units for the tensor product. To generalize the isomorphism to one on $\ShRCJ$ we will need $\RMod(A, - \otimes_{\cC}A)$ to preserve colimits. Since $F \mapsto F \otimes_{\cC}A$ is a left adjoint it preserves colimits. If $\RMod(A,-)$ preserves colimits as well, then so will $\RMod(A, -\otimes_{\cC}A)$, so we prove $\RMod(A,-)$ preserves colimits as well. This will prove that $\eta$ restricts to an isomorphism on $\ShRCJ$. First:
\begin{repunit}
Representables are units for the tensor product.
\end{repunit}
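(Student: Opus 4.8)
The plan is to establish the representation-theoretic analogue of the classical identity $y(C)\otimes_{\cC}A \simeq A(C)$: for each $C \in \Ob(\cC)$ the representable $\ROhC = \RMod(-,C)$ should satisfy
$$\ROhC \otimes_{\cC} A \simeq A(C).$$
This is the precise sense in which representables are ``units'' for $-\otimes_{\cC}A$; tensoring the representable at $C$ with the inclusion $A$ returns the value $A(C)$, so that on representables the tensor product is completely determined by evaluation. Unwinding the definitions, for $A$ the inclusion of the generators $A(C)$ is just the object $C$ regarded inside $\cE$, so the content is that the colimit presenting $\ROhC \otimes_{\cC}A$ collapses to a single object.

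The cleanest route chains together the three results already in hand. For an arbitrary $E \in \Ob(\cE)$ I would apply in succession the adjunction $-\otimes_{\cC}A \dashv \RMod(A,-)$, the $R$-module Yoneda lemma, and the forgetful isomorphism $\Hom_{\cE}(-,-)\xrarr{\sim}\RMod(-,-)$ furnished by the enrichment of $\cE$ in a strict assembly category:
\begin{align}
\Hom_{\cE}(\ROhC \otimes_{\cC} A,\, E)
&\simeq \Nat(\ROhC,\, \RMod(A,E)) \nonumber \\
&\simeq \RMod(A,E)(C) \nonumber \\
&= \RMod(A(C),\, E) \nonumber \\
&\simeq \Hom_{\cE}(A(C),\, E). \nonumber
\end{align}
Each arrow is natural in $E$, so the composite is a natural isomorphism of representable functors $\Hom_{\cE}(\ROhC \otimes_{\cC}A,-) \simeq \Hom_{\cE}(A(C),-)$ on $\cE$; the Yoneda lemma in $\cE$ then yields $\ROhC \otimes_{\cC}A \simeq A(C)$.

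In carrying this out I would first make the middle Yoneda step explicit in the $\underline{(-)}^+$ language used earlier, checking that the correspondence $\gamma \mapsto \gamma_C(\text{id}_C)$ sends the structural element $\str_{\RMod(C,C)}$ to $\str$, so that the isomorphism is one of $R$-modules and not merely of underlying sets of selections. Naturality in $E$ is then routine, since the adjunction and Yoneda isomorphisms are natural and the enrichment isomorphism is natural by hypothesis. Finally I would identify the resulting isomorphism with the unit component: $\eta_{\ROhC,C}$ sends $f \in \RMod(C,C)$ to $f\otimes -:A(C)\rarr \ROhC \otimes_{\cC}A$, and one checks that $\text{id}_C \otimes -$ is exactly the inverse of the composite above, which sets the stage for proving $\eta$ is an isomorphism on representables in the lemma that follows.

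The main obstacle I anticipate is bookkeeping rather than conceptual: the tensor product $\ROhC \otimes_{\cC}A$ is built as a coequalizer of coproducts indexed by the $R$-modules $\RMod(C',C)$, and the identification with $A(C)$ must respect the additive $\str$-structure, not only the underlying sets. A more hands-on alternative avoids the enrichment altogether: present $\ROhC \otimes_{\cC}A$ by its coequalizer and observe that the map $\coprod_{C',\,g\in\RMod(C',C)} A(C') \rarr A(C)$ whose $(C',g)$-component is $A(g)$ coequalizes $\theta$ and $\tau$ by functoriality of $A$; showing it is the coequalizer amounts to exhibiting $(C,\text{id}_C)$ as terminal in the relevant part of $\int \ROhC$, which is again where the $\str$-elements of the category of elements must be treated with care. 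Either way the decisive point is that enrichment in a strict assembly category identifies $\RMod(A(C),E)$ with $\Hom_{\cE}(A(C),E)$, converting the formal adjunction--Yoneda identity into a statement about objects of $\cE$.
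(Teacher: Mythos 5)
Your proposal is correct, but it takes a genuinely different route from the paper; in fact, the ``hands-on alternative'' you sketch at the end is essentially the paper's actual proof. The paper argues directly on the coequalizer presentation: it displays the defining coequalizer $\coprod_{C,\psi,u}A(C') \rrarr \coprod_{C,\psi}A(C) \xrarr{\phi} \ROh_E\otimes_{\cC}A$ and observes that $\Psi = \coprod\psi$, the map whose $(C,\psi)$-component is $\psi: A(C)\rarr E$ itself, coequalizes $\theta$ and $\tau$ by construction (since $\psi|_{\theta A(C')} = \psi\circ u = \psi|_{\tau A(C')}$ on components), so that $\Psi$ factors through the coequalizer and the induced map is inverse to the canonical map $A(E)\rarr \ROh_E\otimes_{\cC}A$; note the paper states this for arbitrary $E$, not only for objects of $\cC$. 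Your primary route is instead the abstract one: chain the already-established adjunction $-\otimes_{\cC}A \dashv \RMod(A,-)$, the $R$-linear Yoneda lemma, and the enrichment isomorphism $\Hom_{\cE}(A(C),E)\xrarr{\sim}\RMod(A(C),E)$ to get $\Hom_{\cE}(\ROhC\otimes_{\cC}A,E)\simeq\Hom_{\cE}(A(C),E)$ naturally in $E$, then conclude by Yoneda in $\cE$. This buys you several things: the $\str$-bookkeeping is delegated wholesale to the Yoneda lemma already proved in Section 4 rather than redone inside a coequalizer; naturality comes for free; and the explicit identification of the composite with $\eta_{\ROhC,C}$, $f\mapsto f\otimes -$, which you rightly flag, is precisely what the subsequent lemma on $\eta$ being an isomorphism on representables needs, so your argument splices into the sequel cleanly. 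What the paper's direct computation buys is an explicit inverse, no appeal to Yoneda in $\cE$ (hence no reliance on naturality checks), and uniformity in $E\in\Ob(\cE)$, where it essentially overlaps with the counit isomorphism of the preceding lemma. One hypothesis to make explicit in your version: applying the $R$-module Yoneda lemma requires $F = \RMod(A,E)$ to be $R$-linear on the selection-object structure, which is exactly what the strict-assembly enrichment of $\cE$ guarantees --- the same hypothesis the paper invokes for the forgetful isomorphism --- so no new assumption is smuggled in.
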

\begin{proof}
We want to prove that the ascending map on the right below is an isomorphism, and this will prove the result:
\beq
\setlength{\unitlength}{0.5cm}
\begin{picture}(25,9)(1,-1)
\thicklines
\put(0,6){$\coprod_{\substack{C \in \Ob(\cC) \\ \psi \in \RMod(C,E) \\ u:C' \rarr C}}$}
\put(4.5,6){$A(C')$}
\put(7,6){\vector(1,0){3}}
\put(7,6.5){\vector(1,0){3}}
\put(8,7){$\theta$}
\put(8,5.5){$\tau$}
\put(11,6){$\coprod_{\substack{C \in \Ob(\cC) \\ \psi \in \RMod(C,E)}}$}
\put(15.5,6){$A(C)$}
\put(17.5,6){\vector(1,0){3.5}}
\put(19,6.5){$\phi$}
\put(22,6.5){$L(\ROh_E)$}
\put(24,5.5){$\|$}
\put(22,4.5){$ \ROh_E \otimes_{\cC}A$}
\put(24,-1){$A(E)$}
\put(17,5){\vector(4,-3){6.5}}
\put(19,2){$\Psi$}
\put(25,0){\vector(0,1){3.5}}
\end{picture} \nonumber
\eeq
where $\Psi = \coprod \psi$, the composition at the top is the coequalizer defining the left adjoint $L$, the bottom composition existing by construction. To write $\Psi \theta = \Psi \tau$ means
\beq
\coprod_{C'}\psi |_{\theta A(C')}  = \coprod_{C'}\psi |_{\tau A(C')}
\eeq
on components, with obvious notations, and with $\psi |_{\theta A(C')} =  C' \xrarr{u} C \xrarr{\psi} E$ and $\psi |_{\tau A(C')} = C \xrarr{\psi} E$. Now if $\Psi = \coprod \psi$, then $\Psi$ coequalizes $\theta$ and $\tau$ by construction, hence $A(E) \xrarr{\sim} \ROh_E \otimes_{\cC}A$ for any $E$.
\end{proof}
\begin{etaiso}
$\eta$ is an isomorphism on representables.
\end{etaiso}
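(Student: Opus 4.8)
The plan is to deduce this directly from the preceding Lemma (representables are units for the tensor product) together with the fact that $A: \cC \rarr \cE$ is the inclusion of the full subcategory of generators. Fix $E \in \Ob(\cC)$ and consider the representable $\ROh_E = \RMod(-,E)$. The unit component to analyze is
\beq
\eta_{\ROh_E}: \ROh_E \rarr \RMod(A, \ROh_E \otimes_{\cC}A), \nonumber
\eeq
whose value at $C \in \Ob(\cC)$ sends $f \in \ROh_E(C) = \RMod(C,E)$ to $f \otimes - : A(C) \rarr \ROh_E \otimes_{\cC}A$. The goal is to identify the target with $\ROh_E$ and show that $\eta_{\ROh_E}$ becomes the identity.

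First I would invoke the preceding result to obtain a natural isomorphism $\ROh_E \otimes_{\cC}A \xrarr{\sim} A(E)$ in $\cE$; since $A$ is the inclusion, $A(E) = E$ and $A(C) = C$. Applying the right adjoint $\RMod(A,-)$, which sends an object $E'$ of $\cE$ to the functor $C \mapsto \RMod(A(C),E') = \RMod(C,E')$, this isomorphism yields, naturally in $C$,
\beq
\RMod(A, \ROh_E \otimes_{\cC}A)(C) = \RMod(C, \ROh_E \otimes_{\cC}A) \cong \RMod(C,E) = \ROh_E(C), \nonumber
\eeq
so that $\RMod(A, \ROh_E \otimes_{\cC}A) \cong \ROh_E$ in $\RModCop$.

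The main step is then to check that the composite of $\eta_{\ROh_E}$ with this identification is the identity of $\ROh_E$, so that $\eta_{\ROh_E}$ is the inverse isomorphism. I would do this by tracing a morphism $f: C \rarr E$ through the construction. The isomorphism $\ROh_E \otimes_{\cC}A \cong A(E)$ was produced from the coequalizer
\beq
\coprod_{C,\psi,u} A(C') \rrarr \coprod_{C,\psi} A(C) \xrarr{\phi} \ROh_E \otimes_{\cC}A \nonumber
\eeq
of that proof, in which $\psi$ ranges over $\RMod(C,E)$ and the leg of the colimiting cocone indexed by $\psi = f$ is precisely $f \otimes -$. Since the map $\Psi = \coprod \psi$ restricts on the $(C,f)$-summand to $f: C = A(C) \rarr A(E) = E$ and coequalizes the parallel pair, the induced isomorphism sends the leg $f \otimes -$ to $f = A(f)$. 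Hence, under the identifications above, $\eta_{\ROh_E}$ acts at $C$ as $f \mapsto f$; naturality in $C$ follows from naturality of the defining cocone, and the $R$-module structure is preserved because every leg is an element of the relevant $R$-module, $\cE$ being strict. Therefore $\eta_{\ROh_E}$ is an isomorphism for every representable.

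I expect the only genuine content to be this last matching of the abstractly defined arrow $f \otimes -$ with the corresponding morphism $f$ under the tensor-unit isomorphism; everything else is formal manipulation of the adjunction $-\otimes_{\cC}A \dashv \RMod(A,-)$ and the observation $A(E) = E$. A minor point worth recording is naturality of $\eta$ simultaneously in the representing object $E$, since this is what will allow the isomorphism to be propagated from representables to all of $\ShRCJ$ by colimit preservation in the subsequent step.
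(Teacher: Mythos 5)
Your proposal is correct and follows essentially the same route as the paper: both use the preceding lemma to get $A(C) \xrarr{\sim} \ROh_C \otimes_{\cC} A$, identify $\ROh_C = \RMod(A, A(C))$ via the full inclusion $A$, and conclude that $\eta$ fits into a commuting triangle of isomorphisms. The only difference is that you explicitly trace $f \mapsto f \otimes -$ through the coequalizer legs to verify the triangle commutes, a detail the paper simply asserts.
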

\begin{proof}
We have just proved that for any $C \in \Ob(\cC)$, then $A(C) \xrarr{\sim} \ROh_C \otimes_{\cC} A$. With $\ROh_C(D) = \RMod(D,C)$, then $\ROh_C = \RMod(A, A(C))$, thus:
\beq
\setlength{\unitlength}{0.5cm}
\begin{picture}(22,6)(0,0)
\thicklines
\put(0,4){$\ROh_C$}
\put(6,0){$\RMod(A,A(C))$}
\put(14,4){$\RMod(A, \ROh_C \otimes_{\cC}A)$}
\put(3,3){\vector(1,-1){2}}
\put(3,1.5){$\sim$}
\put(12,1){\vector(1,1){2}}
\put(13,1.5){$\sim$}
\multiput(4,4)(0.5,0){18}{\line(1,0){0.3}}
\put(12.5,4){\vector(1,0){0.5}}
\put(7,4.5){$\sim$}
\end{picture}
\eeq
Now $\eta_{\ROh_C, D}: \ROh_C(D) \rarr \RMod(A(D), \ROh_C \otimes_{\cC}A)$ so this diagram commutes, hence $\eta$ is an isomorphism on representables.
\end{proof}
It remains to prove that:
\begin{preco}
$\RMod(A,-): \cE \rarr \ShRCJ$ preserves colimits.
\end{preco}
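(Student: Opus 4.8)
The functor $\RMod(A,-)$ is a right adjoint to $-\otimes_{\cC}A$, so it automatically preserves all limits; the point of the lemma is that Giraud's axioms force it to preserve colimits as well. The plan is to reduce the claim to two special cases and then run the set-valued arguments of \cite{ML}, carrying the $R$-module (selection) structure along. Since the inclusion $\ShRCJ \hookrightarrow \RModCop$ admits sheafification as a colimit-preserving left adjoint, and colimits in $\RModCop$ are computed pointwise, I would first observe that it suffices to show that for every small diagram $D: \mathcal{I} \rarr \cE$ the canonical comparison map from the sheafification of the pointwise colimit $C \mapsto \colim_i \RMod(C, D_i)$ to $\RMod(-, \colim_i D_i)$ is an isomorphism in $\ShRCJ$. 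Because every colimit is the coequalizer of a pair of maps between coproducts, it is then enough to treat coproducts and coequalizers separately.

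For coproducts, let $\{E_\alpha\}$ be a small family in $\cE$. By condition (i) of Theorem \ref{Gir}, coproducts in $\cE$ exist and are disjoint and stable under pullback; hence for any generator $C$ and any $g: C \rarr \coprod_\alpha E_\alpha$, pulling the decomposition back along $g$ gives $C \simeq \coprod_\alpha (C \times_{\coprod_\alpha E_\alpha} E_\alpha)$. The resulting family $\{C_\alpha \rarr C\}$ is epimorphic, hence a $J$-cover, and by disjointness each $C_\alpha \rarr \coprod_\alpha E_\alpha$ factors uniquely through $E_\alpha$. This would exhibit $\RMod(-, \coprod_\alpha E_\alpha)$ as the coproduct in $\ShRCJ$ of the sheaves $\RMod(-,E_\alpha)$, the additive structure being transported through the selection formalism exactly as the underlying sets are handled in \cite{ML}; thus $\RMod(A,-)$ preserves coproducts.

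For coequalizers I would take a coequalizer $a \rrarr b \rarr c$ in $\cE$. Its coequalizing arrow $b \rarr c$ is an epimorphism, hence by (ii) the coequalizer of its kernel pair, and by (iii) that kernel pair is an effective equivalence relation with quotient $c$; so it is enough to treat the exact fork $b \times_c b \rrarr b \rarr c$, which by (iv) is stably exact. All three presheaves $\RMod(-, b\times_c b)$, $\RMod(-,b)$, $\RMod(-,c)$ are sheaves by the lemma just proved, so the claim reduces to showing that the sheafified pointwise fork is exact. Concretely, for a generator $C$ a section $C \rarr c$ lifts to $b$ after passing to the $J$-cover $C \times_c b \rarr C$ obtained by pulling the epi $b \rarr c$ back along $C \rarr c$, while two local lifts that agree in $c$ differ by a map into $b \times_c b$; stable exactness should guarantee that on covers this identification is precisely the given equivalence relation, which is exactly the statement that the fork of sheaves is a coequalizer. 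The hard part will be this last step: converting the stable exactness of $b\times_c b \rrarr b \rarr c$ into the descent statement that local sections of the quotient $c$ are exactly equivalence classes of local sections of $b$. Stability (iv) is needed precisely so that the covers and the kernel-pair relation survive all the pullbacks required to glue local lifts, and one must also check that the $R$-module structure on the hom-modules is respected throughout; the set-theoretic skeleton of this argument is the one in \cite{ML}. Combining it with the coproduct case would yield preservation of all small colimits.
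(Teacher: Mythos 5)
Your plan has a genuine gap at the heart of the coequalizer case: the claim that for an arbitrary coequalizer $a \rrarr b \rarr c$ ``it is enough to treat the exact fork $b \times_c b \rrarr b \rarr c$'' is unjustified. Preserving kernel-pair forks only shows that $\RMod(A,-)$ preserves coequalizers of \emph{equivalence relations}. For the general case one must check that a cocone $h: \RMod(A,b) \rarr X$ coequalizing the images of the original pair $a \rrarr b$ also coequalizes the images of the kernel-pair projections; this is not formal, because the kernel pair of $q: b \rarr c$ is only the equivalence relation \emph{generated} by the image of $(f,g): a \rarr b \times b$, and one must show that this generation process (reflexive, symmetric and transitive closure, built from coproducts, image factorizations of epis, and pullbacks) is itself preserved. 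That is precisely the content of App.2.3 of \cite{ML}, which the paper invokes: if $\cE$ satisfies (i)--(iv) and a functor preserves coproducts, finite limits and coequalizers of equivalence relations, then it preserves all coequalizers, hence all colimits. Your proposal neither cites nor reconstructs this bridging lemma, so as written it delivers preservation of coproducts and of exact forks only, which does not suffice for your own reduction ``every colimit is a coequalizer of maps between coproducts.''

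You also locate the ``hard part'' in the wrong place. For the exact fork no delicate descent from stable exactness is needed: $\RMod(A,-)$ is left exact, so it preserves the kernel pair on the nose, and the real content is (a) that it preserves epis --- by pullback-stability of epis in $\cE$ one shows $u_*$ is locally surjective, as in the paper's two auxiliary lemmas --- and (b) that in $\ShRCJ$ a locally surjective morphism is an epi and every epi is the coequalizer of its kernel pair. Together these give the exact-fork case essentially for free, which is how the paper proceeds. Conversely, the coproduct case, which you treat as routine, is where the paper expends most of its care: it first \emph{proves} that sheafification $a \dashv i$ exists for presheaves of $R$-modules and that $a \circ i \simeq \mathrm{id}$ (facts you assume), then shows the comparison $a\phi$ is epi by local surjectivity with a separate treatment of the structural element $\str_M$, and mono by using disjointness of coproducts, the fact that a sheaf of $R$-modules sends $0$ to the trivial module, and left exactness of $a$. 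Your direct gluing argument via the cover $\{C_\alpha \rarr C\}$ is close in spirit, but the amalgamation of the $R$-module structure (the $\str$ sections) and the uniqueness clause need the explicit verification the paper supplies.
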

\begin{proof}
We follow the proof of \cite{ML} for the case of sheaves of sets modified to our setting. In proving this lemma we will need that $\RMod(A,-)$ preserves epis. In proving such a fact we will need to prove that for an epi $u: E' \rarr E$, $u_*: \RMod(-,E') \rarr \RMod(-,E)$ is locally surjective, as well as the fact that $u_*$ is an epi in $\ShRCJ$ as a consequence of the local surjectivity of $u_*$, so we consider this first:
\begin{locsurj}
A morphism $\phi: F \rarr G$ of sheaves of $R$-modules is an epi in $\ShRCJ$ if for all $C \in \Ob(\cC)$, for all $y \in G(C)$, there is a cover $S$ of $C$ such that for all $f: D \rarr C$ in $S$, $y \cdot f$ is in the image of $\phi_D: F(D) \rarr G(D)$, which is referred to as $\phi$ being locally surjective.
\end{locsurj}
\begin{proof}
This is proved exactly as in III.7.5 of \cite{ML}.
\end{proof}

Now we need that $\RMod(A,-): \cE \rarr \ShRCJ$ preserves epis:
\begin{presepi}
$\RMod(A, -): \cE \rarr \ShRCJ$ preserves epis.
\end{presepi}
\begin{proof}
Exactly as in the proof of App.3.3 of \cite{ML}. One starts from an epi $u:E' \rarr E$ in $\cE$. Using that epis are stable under pullback and the fact that $\cE$ is enriched in a strict assembly category parametrized by a $R$-module, pullback diagrams as in App.3.3 in $\cE$ project down to pullbacks involving $R$-homomorphisms which lead to $u_*: \ROh_{E'} \rarr \ROh_E$ being locally surjective, hence epi by the preceding lemma.
\end{proof}

Finally we can prove that $\RMod(A,-)$ preserves colimits. This uses the fact that for $\cE$ satisfying (i)-(iv), for any category $\cF$, then if $F: \cE \rarr \cF$ preserves coproducts, finite limits and coequalizers of equivalence relations, then $F$ preserves all coequalizers, and consequently all colimits, the proof of which is done in App.2.3 of \cite{ML} and carries over to the case of $\RMod(A,-): \cE \rarr \ShRCJ$. Using the fact that epis are coequalizers in $\ShRCJ$, (iii), and the fact that $\RMod(A,-)$ is left exact since $\Hom_{\cE}(A,-)$ is, then $\RMod(A,-)$ preserves coequalizers of equivalence relations. Thus it remains to prove that $\RMod(A,-)$ preserves small coproducts.\\

One important fact that was true for sheaves of sets is that for any small family $\{F_{\alpha} \}$ of sheaves in $\ShCJ$:
\beq
\coprod_{\substack{\alpha \\ \text{sheaves}}} F_{\alpha} \simeq a \big( \coprod_{\substack{\alpha \\ \text{sets}}} F_{\alpha} \big)
\eeq
where $a: \SetCop \rarr \ShCJ$ is the sheafification functor. This follows from the more general fact that:
\beq
a(\lim_{\rarr} i F_{\alpha}) \simeq \lim_{\rarr} ai F_{\alpha} \simeq \lim_{\rarr} F_{\alpha}
\eeq
where $i: \ShCJ \rarr \SetCop$ with $a \dashv i$. Thus we would like to prove for sheaves of $R$-modules that the inclusion functor $i: \ShRCJ \rarr \RModCop$ has a left adjoint $a: \RModCop \rarr \ShRCJ$ as well as $a \circ i: \ShRCJ \rarr \ShRCJ$ is isomorphic to the identity, in which case we would have that for any small family of sheaves of $R$-modules $\{ F_{\alpha} \}$:
\beq
a \coprod_{\alpha} i F_{\alpha} \simeq \coprod_{\alpha} ai F_{\alpha} \simeq \coprod_{\alpha} F_{\alpha}
\eeq
\begin{Ladj}
The inclusion functor $i: \ShRCJ \rarr \RModCop$ has a left adjoint $a: \RModCop \rarr \ShRCJ$
\end{Ladj}
\begin{proof}
Proof as in III.5.1 of \cite{ML}. One defines $a(F) = (F^+)^+$ where $F^+(C) = \colim_{S \in J(C)} \text{match}(S,F)$ where $\text{match}(S,F)$ denotes the set of matching families for $S \in J(C)$. Note that $+$ is a functor of presheaves of $R$-modules since for any map $\psi: F \rarr G$ of elements of $\RModCop$, we have an induced map $\psi^+: F^+ \rarr G^+$. We prove for all $F \in \RModCop$, $F^+$ is a separated sheaf, meaning it has at most one amalgamation, and if $F$ is separated, then $F^+$ is a sheaf, making $a(F) = (F^+)^+$ a sheaf $\forall F \in \RModCop$. That $F^+$ is separated for $F \in \RModCop$ is proved exactly as in III.3.4 of \cite{ML}. That $F^+$ is a sheaf if $F$ is separated is proved as in III.3.5 of \cite{ML}. Thus $a(F) = (F^+)^+$ defines a functor $\RModCop \rarr \ShRCJ$. It turns out that in addition to this functor, there is a map of presheaves:
\begin{align}
\eta: F &\rarr F^+ \nonumber \\
x \in F(C) & \mapsto \eta_C(x) = [\{x \cdot f \, | \, f \in \text{max sieve on }C \}]
\end{align}
where the brackets denote equivalence classes of matching families as defined in \S III.5 of \cite{ML}. We need the following fact: that for $F \in \RModCop$, $G \in \ShRCJ$, for any $\phi: F \rarr G$, there is a unique factorization through $\eta$ as in:
\beq
\setlength{\unitlength}{0.5cm}
\begin{picture}(6,7)(0,0)
\thicklines
\put(0,6){$F$}
\put(5,6){$F^+$}
\put(4.7,1){$G$}
\put(1,6){\vector(1,0){3}}
\put(2,6.5){$\eta$}
\put(1,5){\vector(1,-1){3}}
\put(2,3){$\phi$}
\multiput(5,5)(0,-0.5){5}{\line(0,-1){0.3}}
\put(5,2.5){\vector(0,-1){0.5}}
\end{picture}
\eeq
and this is proved exactly as in III.5.3. Now $F \xrarr{\eta} F^+ \xrarr{\eta} F^{++} = a(F)$ and $F \xrarr{a} a(F)$ with $a(F)$ a sheaf, so there is a unique map $\tilde{a}$ as shown below:
\beq
\setlength{\unitlength}{0.5cm}
\begin{picture}(6,7)(0,0)
\thicklines
\put(0,6){$F$}
\put(5,6){$F^+$}
\put(4.5,1){$a(F)$}
\put(1,6){\vector(1,0){3}}
\put(2,6.5){$\eta$}
\put(1,5){\vector(1,-1){3}}
\put(2,3){$a$}
\multiput(5,5)(0,-0.5){5}{\line(0,-1){0.3}}
\put(5,2.5){\vector(0,-1){0.5}}
\put(6,4){$\tilde{a}$}
\end{picture}
\eeq
Now given $\tilde{a}$, there is a unique $\tilde{\tilde{a}}$ as shown below:
\beq
\setlength{\unitlength}{0.5cm}
\begin{picture}(14,7)(0,0)
\thicklines
\put(0,6){$F$}
\put(5,6){$F^+$}
\put(4.5,1){$a(F)$}
\put(1,6){\vector(1,0){3}}
\put(2,6.5){$\eta$}
\put(1,5){\vector(1,-1){3}}
\put(2,3){$a$}
\multiput(5,5)(0,-0.5){5}{\line(0,-1){0.3}}
\put(5,2.5){\vector(0,-1){0.5}}
\put(6,4){$\tilde{a}$}
\put(6,6){\vector(1,0){3}}
\put(7,6.5){$\eta$}
\put(10,6){$F^{++} = a(F)$}
\multiput(11.5,5)(-0.5,-0.3){10}{\circle*{0.15}}
\put(6.7,2.2){\vector(-3,-2){0.5}}
\put(10,3){$\tilde{\tilde{a}}$}
\end{picture}
\eeq
but $id$ is one such map satisfying this, so by uniqueness $\tilde{\tilde{a}} = id$, hence $a \dashv i$ by universality. Further $a(F) = ia(F)$ so $\eta^2$ is the unit of the adjunction $a \dashv i$.
\end{proof}
We also need the following fact:
\begin{ai}
$a \circ i: \ShRCJ \rarr \ShRCJ$ is naturally isomorphic to the identity functor.
\end{ai}
\begin{proof}
This follows from III.5.2: $F$ is a sheaf if and only if $\eta: F \rarr F^+$ is an isomorphism. Suppose $F$ is a sheaf of $R$-modules. Then $F = i(F) \xrarr{\eta} F^+$, so $i(F) \simeq F^+$, hence $ai(F) \simeq a(F^+) = F^+ = F$ for all $F$, hence $ai$ is isomorphic to the identity functor.
\end{proof}
Thus what we have obtained thus far is:
\begin{align}
\coprod_{\substack{\alpha \\ \ShRCJ}} F_{\alpha} &\simeq \coprod_{\substack{\alpha \\ \ShRCJ}} ai F_{\alpha} \nonumber \\
& \simeq a \big( \coprod_{\substack{\alpha \\ \ShRCJ}} iF_{\alpha} \big) \nonumber \\
& \simeq a \big( \coprod_{\substack{\alpha \\ \RModCop}} F_{\alpha} \big)
\end{align}
To prove that $\RMod(A,-)$ preserves small coproducts, let $\coprod_{\alpha} E_{\alpha}$ be one such small coproduct in $\cE$, with the coproduct inclusions $i_{\alpha}: E_{\alpha} \rarr \coprod_{\alpha} E_{\alpha}$. Then we have induced morphisms:
\beq
\setlength{\unitlength}{0.5cm}
\begin{picture}(21,14)(0,0)
\thicklines
\put(-1,13){$\coprod_{\substack{\alpha \\ \RModCop}} \RMod(A, E_{\alpha})$}
\put(14,13){$\RMod(A, \coprod_{\alpha}E_{\alpha})$}
\put(-2,6){$ia \coprod_{\substack{\alpha \\ \RModCop}} \RMod(A, E_{\alpha})$}
\put(4,4){$\simeq$}
\put(1,2){$i \coprod_{\alpha} \RMod(A, E_{\alpha})$}
\put(13,6){$a\RMod(A, \coprod_{\alpha}E_{\alpha})$}
\put(10,13){\vector(1,0){3}}
\put(11,13.5){$\phi$}
\put(4,12){\vector(0,-1){4}}
\put(5,9){$\eta = ia$}
\put(17,12){\vector(0,-1){4}}
\put(18,10){$\eta = ia$}
\put(10,3){\vector(3,1){5.5}}
\put(14,3){$a\phi$}
\end{picture}
\eeq
We want $a\phi$ to be an isomorphism in $\ShRCJ$. That $a\phi$ is epis is not difficult to prove for sheaves of $R$-modules and goes exactly as in App.3.4. This uses a variant of III.7.6 of \cite{ML} in our case: a morphism of presheaves $\psi: F \rarr G$ is such that $a\psi: aF \rarr aG$ is epi if $\psi$ is locally surjective. The proof of such a fact goes as in the proof of III.7.6 and uses the fact that $\eta: F \rarr aF$ is universal and natural. The rest of the proof that $a\phi$ is epi uses the fact that coproducts are disjoint in $\cE$ and is otherwise the same, except for the case where the element $r:C \rarr \coprod_{\alpha}E_{\alpha}$ to be picked is the $R$-module structure. Let $M = \RMod(C, \coprod_{\alpha}E_{\alpha})$, then let $r = \str_M$. For any $\alpha$ there is a pullback with $N_{\alpha} = \RMod(P_{\alpha}, E_{\alpha})$:
\beq
\begin{CD}
B @>>> P_{\alpha} @>\str_{N_{\alpha}}>> E_{\alpha} \\
@. @VVj_{\alpha}V @VVi_{\alpha}V \\
 @. C @>>\str_M> \coprod_{\alpha}E_{\alpha}
\end{CD}
\eeq
Such a pullback can be read as follows: the pullback of any map of $\RMod(C, \coprod_{\alpha}E_{\alpha})$ on which there is a $R$-module structure $r = \str_M$ is a map $P_{\alpha} \rarr E_{\alpha}$, element of some $R$-module on which there is a $R$-module structure $\str_{N_{\alpha}}$. The restriction of any map $C \rarr \coprod_{\alpha}E_{\alpha}$ of $\RMod(C, \coprod_{\alpha} E_{\alpha})$ along $B \rarr P_{\alpha} \rarr C$ factors through that composite, which by commutativity is in the image of $\str_{N_{\alpha}} \rarr \str_M$, where $M_{\alpha} = \RMod(C, E_{\alpha})$.\\

We now prove $a\phi$ is mono. Fix $C \in \Ob(\cC)$, $\phi_C$ the component of $\phi$ at $C$. Let $K$ be the kernel pair of $\phi$ in $\RModCop$. In $\RModCop$ such kernel pairs are computed pointwise so we have a kernel pair $K(C)$ of $\phi_C$ in $\RMod$. Using that coproducts are disjoint in $\cE$ one would show exactly as in App.3.4 that as a subobject of $(\coprod_{\alpha, \RModCop} \RMod(A, E_{\alpha}))^2$, $K$ is mapped by $\eta = i \circ a$ into the diagonal $\Delta \coprod_{\alpha} \RMod(A, E_{\alpha})$, which uses the fact that a sheaf of $R$-modules must take 0 to the trivial group. It follows that $aK \subset (\coprod_{\alpha} \RMod(A, E_{\alpha}))^2$ is contained in $\Delta \coprod_{\alpha} \RMod(A, E_{\alpha})$. $a$ being left exact, $aK$ is the kernel pair of $a\phi$, which implies that $a\phi$ is mono, hence $a \phi$ is an isomorphism, and thus $\RMod(A,-)$ preserves small coproducts. This completes the proof that $\RMod(A, -)$ preserves colimits.
\end{proof}

Finally $F \mapsto F \otimes_{\cC}A$ preserves colimits as a left adjoint, so does $\RMod(A,-)$, hence the composition $\eta_F: F \mapsto \RMod(A, F \otimes_{\cC}A)$ preserves colimits as well. Since any $F \in \ShRCJ$ is a colimit of representables, $\eta$ being an isomorphism on representables, it is an isomorphism in $\ShRCJ$ and we have proved that the unit of the Hom-tensor adjunction is an isomorphism, which completes the proof that $\cE \simeq \ShRCJ$.\\

Conversely we need to prove that $\ShRCJ$ is a category satisfying Giraud's conditions for being a Grothendieck topos, and is in addition such that $\Mor(\cE)$ is a strict assembly category. We have an embedding $i: \ShRCJ \rarr \RModCop$ with a left adjoint $a: \RModCop \rarr \ShRCJ$ that is left exact, so it preserves limits and colimits. Hence $\ShRCJ$ will satisfy (i)-(iv) if $\RModCop$ does. In $\RModCop$ limits and colimits are computed pointwise, so it suffices to show (i)-(iv) hold in $\RMod$. Sets has all small coproducts hence so does $\RMod$. Coproducts are disjoint in Sets, hence so are they in $\RMod$. The same holds for stability under pullbacks. Since exact forks are stable in Sets, they are stable in $\RMod$ as well. Indeed if we have:
\beq
\setlength{\unitlength}{0.5cm}
\begin{picture}(11,3)(0,0)
\thicklines
\put(0,1){$R$}
\put(5,1){$E$}
\put(10,1){$Q$}
\put(1,1){\vector(1,0){3}}
\put(1,1.5){\vector(1,0){3}}
\put(6,1){\vector(1,0){3}}
\put(2,2.2){$\str_M$}
\put(2,0){$\str_N$}
\put(7,2){$q$}
\end{picture}
\eeq
exact, $Q' \rarr Q$ any map, then:
\beq
R\times_Q Q' \rrarr E \times_Q Q' \xrarr{q \times 1} (Q \times_Q Q') \simeq Q'
\eeq
is exact as well.\\

Any epi in Sets is a coequalizer. Let $q:M \rarr N$ be an epi in $\RMod$ with underlying epi $\underline{q}: \underline{M} \rarr \underline{N}$ in Sets. Hence:
\beq
\setlength{\unitlength}{0.5cm}
\begin{picture}(12,8)(4,5)
\thicklines
\put(4.5,11){$\underline{P}$}
\put(9.5,11){$\underline{M}$}
\put(14.5,11){$\underline{N}$}
\put(4.5,6){$P$}
\put(9.5,6){$M$}
\put(14.5,6){$N$}
\put(6,11.5){\vector(1,0){3}}
\put(6,11){\vector(1,0){3}}
\put(7,12){$\underline{\partial_0}$}
\put(7,10){$\underline{\partial_1}$}
\put(6,6.5){\vector(1,0){3}}
\put(6,6){\vector(1,0){3}}
\put(7,7){$\partial_0$}
\put(7,5){$\partial_1$}
\put(11,11){\vector(1,0){3}}
\put(11,6){\vector(1,0){3}}
\put(13,12){$\underline{q}$}
\put(13,5){$q$}
\put(5,10){\vector(0,-1){3}}
\put(10,10){\vector(0,-1){3}}
\put(15,10){\vector(0,-1){3}}
\put(3,9){$\Str_P$}
\put(10.5,9){$\Str_M$}
\put(15.5,9){$\Str_N$}
\end{picture}
\eeq
where structural maps $\Str$ endow sets with a given structure of $R$-module, $q|_{\text{Sets}} = \underline{q}$ and $q(\str_M) = \str_N$, $\partial_i |_{\text{Sets}} = \underline{\partial}_i$ $i = 0,1$ and $\partial_i(\str_P) = \str_M$ for $i=0,1$. $q$ coequalizes $\partial_0$ and $\partial_1$ if $q \partial_0 = q \partial_1$ and is universal among such. On elements $\underline{q} \underline{\partial}_0 = \underline{q} \underline{\partial}_1$, and on algebraic structures $\partial_0 \str_P = \str_M = \partial_1 \str_P$. Universality: we have a unique map $\xi$ of sets as below.
\beq
\setlength{\unitlength}{0.5cm}
\begin{picture}(21,14)(4,2)
\thicklines
\put(4.5,11){$\underline{P}$}
\put(9.5,11){$\underline{M}$}
\put(14.5,11){$\underline{N}$}
\put(4.5,6){$P$}
\put(9.5,6){$M$}
\put(14.5,6){$N$}
\put(6,11.5){\vector(1,0){3}}
\put(6,11){\vector(1,0){3}}
\put(6,6.5){\vector(1,0){3}}
\put(6,6){\vector(1,0){3}}
\put(11,11){\vector(1,0){3}}
\put(11,6){\vector(1,0){3}}
\put(5,10){\vector(0,-1){3}}
\put(10,10){\vector(0,-1){3}}
\put(15,10){\vector(0,-1){3}}
\put(18.5,2.5){$R$}
\put(11,5){\vector(3,-1){7}}
\put(18.5,15){$\underline{R}$}
\put(11,12){\vector(3,1){6}}
\multiput(16,12)(0.4,0.4){6}{\circle*{0.15}}
\put(18,14){\vector(1,1){0.5}}
\put(17,12){$\xi$}
\multiput(16,6)(0.4,-0.4){6}{\circle*{0.15}}
\put(18,4){\vector(1,-1){0.5}}
\put(17,6){$\!$}
\put(19,14){\vector(0,-1){10}}
\put(20,9){$\Str_R$}
\end{picture}
\eeq
The lower map $N \rarr R$ is $\xi$ on elements and $\str_N \rarr \str_R$ on algebraic structures. To see that this latter map is unique we adopt the view that $R$-modules $M$ can be viewed as sets of selection objects, some of which select elements of the module, and one in particular selects the $R$-module structure $\str_M$ on $M$, that is $M \simeq \underline{M}^+$, hence for an exact fork of structural maps there is a unique structural map $\str_N \rarr \str_R$. Finally any equivalence relation $R \rrarr E$ in Sets is a kernel pair, hence so it is in $\RMod$. Indeed:
\beq
\setlength{\unitlength}{0.5cm}
\begin{picture}(15,19)(0,0)
\thicklines
\put(4.5,11){$\underline{P}$}
\put(9.5,11){$\underline{M}$}
\put(14.5,11){$\underline{N}$}
\put(4.5,6){$P$}
\put(9.5,6){$M$}
\put(14.5,6){$N$}
\put(6,11.5){\vector(1,0){3}}
\put(6,11){\vector(1,0){3}}
\put(6,6.5){\vector(1,0){3}}
\put(6,6){\vector(1,0){3}}
\put(11,11){\vector(1,0){3}}
\put(11,6){\vector(1,0){3}}
\put(5,10){\vector(0,-1){3}}
\put(10,10){\vector(0,-1){3}}
\put(15,10){\vector(0,-1){3}}
\put(3,9){$\Str_P$}
\put(10.5,9){$\Str_M$}
\put(15.5,9){$\Str_N$}
\put(3,18){$\underline{S}$}
\put(4,17.3){\vector(1,-1){4.9}}
\put(4,17.7){\vector(1,-1){5}}
\put(3.5,17){\vector(1,-4){1.2}}
\put(3,0){$S$}
\put(4,0.3){\vector(1,1){4.9}}
\put(4,0.7){\vector(1,1){5}}
\put(3.5,1){\vector(1,4){1.1}}
\qbezier(2.5,18)(0,18)(0,9)
\qbezier(0,9)(0,0)(2,0)
\put(2,0){\vector(1,0){0.5}}
\put(-2,9){$\Str_S$}
\end{picture}
\eeq
Under the structural maps $\Str$, the fact that equivalence relations are kernel pairs and have a quotient in Sets carries over to $\RMod$. Regarding $R$-module structures, we again regard those as selection objects, elements of a set characterizing $R$-modules. An equivalence relation has a quotient in Sets, hence it has one in $\RMod$ as well, and this would be shown following the same reasoning that epis are coequalizers. For (v), we have already proved that every sheaf in $\ShRCJ$ is a colimit of representables so it has a small set of generators. Thus $\ShRCJ$ is a $\RMod$-enriched category satisfying (i)-(v), since for $M,N$ two $R$-modules, $\Hom_R(M,N)$ is a $R$-module itself since $R$ is commutative. Since $\RMod$ corresponds to a strict assembly category by construction, $\ShRCJ$ is a Grothendieck topos enriched in a strict assembly category, and this proves the theorem.

\end{document}